\newtheorem{example}{\bf Example}[section]
\newtheorem{remark}[theorem]{Remark}
\begin{document}
\date{}

\title{Stochastic linear quadratic optimal control problems with regime-switching jumps in infinite horizon\thanks{The first author and the third author are supported by National Natural Science Foundation of China grant 12171086 and by Fundamental Research Funds for the Central Universities grant 2242021R41082. The second author is supported by RGC of Hong Kong grants 15216720, 15221621 and 15226922 and partially from PolyU 1-ZVXA.}}
\author{Fan Wu \thanks{School of Big Data and Statistics, Anhui University, Hefei 230601, China}\and
Xun Li \thanks{Department of Applied Mathematics, Hong Kong Polytechnic University, Hong Kong, China}\and
Xin Zhang \thanks{Corresponding Author: School of Mathematics, Southeast University, Nanjing 211189, China; E-mail: x.zhang.seu@gmail.com}}

\maketitle

\noindent{\bf Abstract:}
This paper investigates a stochastic linear-quadratic (SLQ, for short) control problem regulated by a time-invariant Markov chain in infinite horizon. Under the $L^2$-stability framework, we study a class of linear backward stochastic differential equations (BSDE, for short) in infinite horizon and discuss the open-loop and closed-loop solvabilities of the SLQ problem. The open-loop solvability is characterized by the solvability of a system of coupled forward-backward stochastic differential equations (FBSDEs, for short) in infinite horizon and the convexity of the cost functional, and the closed-loop solvability is shown to be equivalent to the open-loop solvability, which in turn is equivalent to the existence of a static stabilizing solution to the associated constrained coupled algebra Riccati equations (CAREs, for short). Under the uniform convexity assumption, we obtain the unique solvability of associated CAREs and construct the corresponding closed-loop optimal  strategy.
Finally, we also solve a class of discounted SLQ problems and give two concrete examples to illustrate the results developed in the earlier sections.

\medskip
\noindent{\bf Keywords:} SLQ problem, regime-switching jumps, Infinite horizon, $L^2$-stabilizability, Open-loop and Closed-loop solvability, Static stabilizing solution
\section{Introduction}\label{section-introduction}
Let $(\Omega,\mathcal{F},\mathbb{P} )$ be a complete probability space with the natural filtration $\mathbb{F} :=
\{ \mathcal{F}_t\}_{t\geq 0}$ generated by a standard one-dimensional Brownian motion $W=\{W(t)\}_{t\geq 0}$ and a continuous time irreducible Markov chain $\alpha=\{\alpha_t\}_{t\geq 0}$ with a finite state space $\mathcal{S}:=\left\{1,2,\cdots,L\right\}$.
We let $\mathbb{R}^{n\times m}$ denote the Euclidean space of all $ n \times m$  matrices and set $\mathbb{R}^{n}:=\mathbb{R}^{n\times 1}$ for simplicity. In addition, the set of all $ n\times n$ symmetric matrices is denoted by $\mathbb{S}^n$. Specially, the sets of all $ n\times n$ semi-positive definite matrices and positive definite matrices are denoted by $\overline{\mathbb{S}_{+}^n}$ and $\mathbb{S}_{+}^n$, respectively. Further, for any $M, N \in \mathbb{S}^n$, we write $M \geqslant N$ (respectively, $M>N$) if $M-N$ is semi-positive definite (respectively, positive definite). Let $\mathcal{P}$ be the $\mathbb{F}$ predictable $\sigma$-field on $[0,\infty)\times\Omega$ and we write $\varphi \in \mathcal{P}$  (respectively, $\varphi \in \mathbb{F}$) if it is $\mathcal{P}$-measurable (respectively, $\mathbb{F}$-progressively measurable). Then, for any Euclidean space $\mathbb{H}$ and time interval $\Gamma\subseteq [0,\infty)$, we introduce the following spaces:
$$
\begin{aligned}
&C(\Gamma;\mathbb{H}) =\Big\{\varphi:\Gamma  \rightarrow \mathbb{H} \mid \varphi(\cdot) \text{ is a continuous function } \Big\},\\
&L_{\mathbb{G}}^{2}(\Gamma;\mathbb{H}) =\Big\{\varphi:\Gamma \times \Omega \rightarrow \mathbb{H} \mid \varphi(\cdot) \in \mathbb{G}\text{, } \mathbb{E} \int_{\Gamma}|\varphi(t)|^{2} dt<\infty\Big\},\quad \mathbb{G}=\mathbb{F},\mathcal{P},\\
&L_{\mathbb{F}}^{2,loc}(\mathbb{H}) =\Big\{\varphi:[0, \infty)\times \Omega \rightarrow \mathbb{H} \mid \varphi(\cdot) \in \mathbb{F}\text{, } \mathbb{E} \int_{0}^{T}|\varphi(s)|^{2} ds<\infty, \quad\forall T>0\Big\}.
\end{aligned}
$$
For simplicity, we denote $L_{\mathbb{F}}^{2}(\mathbb{H})=L_{\mathbb{F}}^{2}\left([0,\infty);\mathbb{H}\right)$ and $L_{\mathcal{P}}^{2}(\mathbb{H})=L_{\mathcal{P}}^{2}\left([0,\infty);\mathbb{H}\right)$.

Now, consider the following  controlled linear stochastic differential equation (SDE, for short) with regime-switching jumps over the infinite horizon $[0,\infty)$:
\begin{equation}\label{state}
  \left\{
 \begin{aligned}
   dX(t)&=\left[A(\alpha_{t})X(t)+B(\alpha_{t})u(t)+b(t)\right]dt+\left[C(\alpha_{t})X(t)+D(\alpha_{t})u(t)+\sigma(t)\right]dW(t),\quad t\geq0,\\
   X(0)&=x,\quad \alpha_0=i,
   \end{aligned}
  \right.
\end{equation}
where $b(\cdot)$, $\sigma(\cdot)\in L_{\mathbb{F}}^{2}(\mathbb{R}^{n})$ and for $i\in\mathcal{S}$, $A(i)$, $C(i)\in\mathbb{R}^{n\times n}$, $B(i)$, $D(i)\in\mathbb{R}^{n\times m}$. In equation \eqref{state}, $X(\cdot)\equiv X(\cdot;x,i,u)\in \mathbb{R}^{n}$ is called the \emph{state process} with initial state $x\in\mathbb{R}^{n}$, and $u(\cdot)\in L_{\mathbb{F}}^{2}(\mathbb{R}^{m})$ is the \emph{control process}. Our goal  
is to find an optimal control which minimizes the following cost functional:
\begin{equation}\label{cost}
\begin{aligned}
    J\left(x,i;u(\cdot)\right)
    & \triangleq \mathbb{E}\int_{0}^{\infty}\left[
    \left<
    \left(
    \begin{matrix}
    Q(\alpha_{t})&S(\alpha_{t})^{\top}\\
    S(\alpha_{t})&R(\alpha_{t})
    \end{matrix}
    \right)
    \left(
    \begin{matrix}
    X(t)\\
    u(t)
    \end{matrix}
    \right),
    \left(
    \begin{matrix}
    X(t)\\
    u(t)
    \end{matrix}
    \right)
    \right>+2\left<
    \left(
    \begin{matrix}
    q(t)\\
    \rho(t)
    \end{matrix}
    \right),
    \left(
    \begin{matrix}
    X(t)\\
    u(t)
    \end{matrix}
    \right)
    \right>\right]dt.
  \end{aligned}
\end{equation}
Here, $q(\cdot)\in L_{\mathbb{F}}^{2}(\mathbb{R}^{n})$, $\rho(\cdot)\in L_{\mathbb{F}}^{2}(\mathbb{R}^{m})$ and for $i\in\mathcal{S}$, $Q(i)\in\mathbb{S}^{n}$, $R(i)\in\mathbb{S}^{m}$ and $S(i)\in\mathbb{R}^{m\times n}$. It is worth mentioning that the performance coefficients $Q(\cdot) $ and $R(\cdot) $ in the above are not necessarily (semi) positive definite matrices. Hence, we are about to solve an indefinite SLQ control problem over infinite horizon.

Unlike the finite time horizon case, the solution $X(\cdot)$ to \eqref{state} for any given $u(\cdot)\in L_{\mathbb{F}}^{2}(\mathbb{R}^{m})$ is usually in $L_{\mathbb{F}}^{2,loc}(\mathbb{R}^n)$ and cannot be proved in $L_{\mathbb{F}}^{2}(\mathbb{R}^n)$. In this paper, we study the SLQ control problem under the $L^{2}$-stabilizability framework. Hence, for any given $(x,i)\in \mathbb{R}^{n}\times \mathcal{S}$,  we define the set of admissible controls as follows:
\begin{equation}\label{admissible-controls}
  \mathcal{U}_{ad}(x,i)\triangleq\left\{u(\cdot)\in L_{\mathbb{F}}^{2}(\mathbb{R}^{m}) \mid X(\cdot;x,i,u)\in L_{\mathbb{F}}^{2}(\mathbb{R}^{n})\right\}.
\end{equation}
Any element $u(\cdot)\in \mathcal{U}_{ad}(x,i)$ is called an admissible control associated with $(x,i)$. Now, we introduce the following problem.

\noindent\textbf{Problem (M-SLQ).} For any given $(x,i)\in \mathbb{R}^{n}\times \mathcal{S}$, find a $u^{*}(\cdot)\in\mathcal{U}_{ad}(x,i)$ such that
\begin{equation}\label{value}
     J\left(x,i;u^{*}(\cdot)\right)=\inf_{u(\cdot)\in\mathcal{U}_{ad}(x,i)}J\left(x,i;u(\cdot)\right)\triangleq V(x,i).
\end{equation}
The function $V(\cdot,\cdot)$ is called the value function of Problem (M-SLQ). In addition, if $b(\cdot)=\sigma(\cdot)=q(\cdot)=0$, $\rho(\cdot)=0$, then corresponding state process, cost functional, admissible control set, value function and problem are denoted by $X^{0}(\cdot;x,i,u)$, $J^{0}(x,i;u(\cdot))$, $\mathcal{U}_{ad}^{0}(x,i)$, $V^{0}(x,i)$ and Problem (M-SLQ)$^{0}$, respectively.

 SLQ control problems play an essential role in the control fields and have been widely investigated in the past few decades. The study of SLQ problems can be traced back to the pioneering work of Wonham \cite{Wonham.W.M.1968}. On the other hand, the stochastic model involving regime-switching jumps has important practical significance in various fields, such as engineering, financial management, and economics (see, for examples, \cite{zhang-Elliott-Siu-Guo-2011,Zhang-Siu-Meng-2010,zhang2021open,sun_risk-sensitive_2018,zhang_stochastic_2012,zhang_general_2018}).
 Ji and Chizeck \cite{Ji-Chizeck-1990-D-MLQ-I/F,Ji-Chizeck-1991-D-MLQG-F} formulated a class of continuous-time SLQ optimal controls with regime-switching jumps. Zhang and Yin \cite{Zhang.Q.1999_LQG} developed hybrid controls of a class of LQ systems modulated by a finite-state Markov chain. However, the early research on the SLQ problem depends crucially on the positive/non-negative definiteness assumption imposed on the weighting matrices (see, for examples, \cite{Ji-Chizeck-1990-D-MLQ-I/F,Ji-Chizeck-1991-D-MLQG-F}). To our knowledge, Chen et al. \cite{Chen.S.P.1998_ILQ} was the first to study SLQ control problems with an indefinite quadratic weighting control matrix. Since then, there has been an increasing interest in the so-called indefinite SLQ problems (see, for examples, \cite{Wen_2023,Rami-Zhou-Moore-2000-ID-LQ-IF,Rami-Zhou-2000-LMI-RE-IDLQIF,Li-Zhou-Rami-2001-ID-MLQ-IF,Li-zhou-2002-ID-MLQ-F}).
Recently, Sun et al. \cite{Sun.J.R.2016_open-closed} found that although an indefinite SLQ problem admits an open-loop control for all initial values in the finite horizon, its corresponding Riccati equation is unsolvable. This means that the open-loop solvability and closed-loop solvability are not equivalent for SLQ problems in a finite time horizon. Zhang et al. \cite{zhang2021open} successfully generalized their results to the case within the framework of regime-switching jumps.

The research of the SLQ problem in infinite horizon can be traced back to  Ait Rami et al. \cite{Rami-Zhou-Moore-2000-ID-LQ-IF} and Ait Rami and Zhou \cite{Rami-Zhou-2000-LMI-RE-IDLQIF}. Under the mean-square stabilizing framework, they studied the indefinite SLQ problem via the linear matrix inequality (LMI, for short) and semidefinite programming (SDP, for short) techniques (see, for example, \cite{Boyd-Ghaoui-Feron-Balakrishnan-1994-LMI,Vandenberghe-Boyd-1996-SDP}).  Along this line, Li et al. \cite{Li-Zhou-Rami-2003-ID-MLQ-IF} generalized their results to the SLQ problem with regime-switching jumps.
Yao et al. \cite{Yao-Zhang-ZHou-2004} developed a systematic approach based on SDP and showed that the stability of the feedback control can be effectively examined through the complementary duality of the SDP. Zhu et al. \cite{Zhu-Zhang-Bin-2014} solved stochastic Nash differential games of Markov jump linear systems governed by It\^o-type equation. 
Huang et al. \cite{Jianhui-Huang-2015} introduced the  $L^{2}$-stabilizability of the control system, under which they investigated the SLQ problem with mean-field. It is worth mentioning that the literature presented above only considers homogeneous state processes and cost functionals.
Sun et al. \cite{Sun.JR_2016_IZSLQI}, under the  $L^{2}$-stabilizability framework, considered a two-person zero-sum SLQ differential games problem for the inhomogeneous state process and performance functional. The unique solvability of a class of linear BSDE in infinite horizon was studied for constructing the closed-loop strategy. Since then, a growing number of works has started to discuss the inhomogeneous SLQ differential games and control problems in infinite horizon within the  $L^{2}$-stabilizability framework (See, for example, \cite{Li-Shi-Yong-2021-ID-MFLQ-IF,Wu-Tang-Meng-2023}).
However, to the best of my knowledge, no one has studied the SLQ control problem with regime-switching jumps in infinite horizon under the $L^{2}$-stabilizability framework.

Under the diffusion model, Sun and Yong \cite{Sun-Yong-2018-ISLQI} investigated an inhomogeneous SLQ control problem in an infinite horizon and showed that the open-loop solvability is equivalent to the closed-loop solvability, which in turn is equivalent to the existence of a static stabilizing solution to the associated constrained algebra Riccati equation. Although the main difference between our model and the one in  \cite{Sun-Yong-2018-ISLQI} is the regime-switching jumps, it indeed brings some new challenges and phenomena in solving Problem (M-SLQ). It is worth mentioning that the results obtained in this paper are more comprehensive than those in \cite{Sun-Yong-2018-ISLQI} and all findings can be degenerated to the case without regime-switching jumps by setting $\mathcal{S}=\{1\}$. Below, we carefully compare our work with \cite{Sun-Yong-2018-ISLQI} and present the new challenges and phenomena caused by introducing regime-switching jumps from several perspectives:
\begin{itemize}
\item Sun and Yong \cite{Sun-Yong-2018-ISLQI} can directly adopt the results of $L^{2}$-stability for the linear diffusion system in Huang et al. \cite{Jianhui-Huang-2015}. However, in this paper, we need to revisit the $L^{2}$-stability of the linear regime-switching jump diffusion system, 
which is more complex than the linear diffusion system.
As we can see from Proposition \ref{prop-L2} and Remark \ref{rmk-L2}, the $L^{2}$-stability of linear regime-switching jump diffusion system is equivalent to a system of ``coupled" LMI condition \eqref{L-2}, which is more general than the case of linear diffusion system without regime-switching jumps.
\item To construct the closed-loop strategy for inhomogeneous Problem (M-SLQ), we need to solve a class of linear BSDE with Markovian jumps under the $L^{2}$-stable framework, whose solvability is more challenging than the linear BSDE without Markovian jumps studied in \cite{Sun-Yong-2018-ISLQI}. In this paper, we overcome this difficulty by employing a ``truncation approximation technique" (see Proposition \ref{prop-BSDE} and Lemma \ref{lem-BSDE-E}).
\item We define the closed-loop strategy in a more general sense and prove that the non-emptiness of the closed-loop strategy set is equivalent to the non-emptiness of the admissible control set for all initial states, which in turn is equivalent to the $L^2$-stabilizability of the state system (see Remark \ref{rmk-def} and Theorem \ref{thm-admissible-controls-nonempty-charateristic}). However, Sun and Yong \cite{Sun-Yong-2018-ISLQI} specified in advance that the closed-loop strategy can only be an element of the stabilizers set.
\item In our model, the open-loop optimal control is characterized by FBSDEs \eqref{FBSDE-LQ}, which is not investigated in Sun and Yong \cite{Sun-Yong-2018-ISLQI}. It is worth mentioning that the coefficients of the seconded equation in FBSDEs \eqref{FBSDE-LQ} does not satisfy the condition introduced in Proposition \ref{prop-BSDE} under the assumption (H1).
Therefore, the solvability of the FBSDEs \eqref{FBSDE-LQ} becomes a major challenge for our model. 
To overcome this challenge,
we construct an auxiliary SLQ control problem (denoted by Problem (M-SLQ-$\Sigma$)), whose associated FBSDEs \eqref{FBSDE-LQ-Sigma} can be solved by Proposition \ref{prop-FSDE} and Proposition \ref{prop-BSDE}. And then based on the equivalent relation between Problem (M-SLQ-$\Sigma$) and Problem (M-SLQ), we verify that the solution to FBSDEs \eqref{FBSDE-LQ-Sigma} also solves \eqref{FBSDE-LQ} (see Theorem \ref{thm-open-LQ}). 
\item Under the uniform convexity condition, we obtain the unique solvability of associated CAREs and construct the corresponding closed-loop optimal  strategy under the assumption that the state process is $L^{2}$-stabilizable (see Theorem \ref{thm-uni-convex-result}). However, Sun and Yong \cite{Sun-Yong-2018-ISLQI} only obtained similar results under the assumption that the state process is $L^{2}$-stable.
\item We construct the equivalence relation between Problem (M-SLQ) and a class of discounted SLQ control problems and obtain the corresponding open-loop and closed-loop solvabilities (see Theorem \ref{thm-discounted}). Those are not present in \cite{Sun-Yong-2018-ISLQI}.
\item In the numerical section, we provide an example (see Example \ref{exm-2}) for a discounted SLQ control problem with a two-dimensional state process and obtain the corresponding numerical closed-loop optimal strategy via the LMI and SDP techniques. This generalizes the corresponding results in \cite{Sun-Yong-2018-ISLQI} that consider the numerical example under the one-dimensional case.
\end{itemize}

The rest of the paper is organized as follows. Section \ref{section-stability-FBSDE} establishes the $L^{2}$-stability of the linear system with regime-switching jumps and study the unique solvabilities of corresponding forward and backward SDE in infinite horizon. In Section \ref{section-control}, we introduce the concepts of open-loop solvability and closed-loop solvability and describe the structure of open-loop admissible control and closed-loop strategy sets. Section \ref{section-open-loop} studies the open-loop solvability in terms FBSDEs and provides a Hilbert space point of view for Problem  (M-SLQ). Section \ref{section-equivalence} aims to investigate the equivalence between open-loop solvability and closed-loop solvability and  discuss the solvability of Problem  (M-SLQ) under the uniformly convexity condition. Section \ref{section-discounted} further investigates a discounted SLQ problem based on the results revealed in previous sections. In Section \ref{section-Examples}, two concrete examples shed light on how to employ the obtained results.

\section{Stability and FBSDE in infinite horizon}\label{section-stability-FBSDE}
In this section, we study the $L^{2}$-stability of a linear system regulated by a Markov chain. We also investigate the unique solvabilities of corresponding forward and backward SDE in infinite horizon. The obtained results can be considered as the research cornerstone of this paper. Before we embark on this program, we first introduce some useful notations besides those introduced in the previous section.

 We assume the generator of the Markov chain $\alpha$ is $\Pi:=[\pi_{ij}]_{i,j=1,\cdots,L}$.
Let $N_{j}(t)$ be the number of jumps into state $j$ up to time $t$ and set
$\widetilde{N}_{j}(t)\triangleq N_{j}(t)-\int_{0}^{t}\sum_{i\neq j}^{L}\pi_{ij}\mathbb{I}_{\{\alpha_{s-}=i\}}(s)ds$.
 Then for each $j\in \mathcal{S}$, the process $\widetilde{N}_{j}(\cdot)$ is an $\left(\mathbb{F},\mathbb{P}\right)$-martingale.
For any given D-dimensional vector process $\mathbf{\Gamma}(\cdot)=\left[\Gamma_1(\cdot),\Gamma_2(\cdot),\cdots,\Gamma_{L}(\cdot)\right]$, we define $\mathbf{\Gamma}(s)\cdot d\mathbf{\widetilde{N}}(s)\triangleq\sum_{j=1}^{L}\Gamma_{j}(s)d\widetilde{N}_{j}(s)$. We further let $M^{\top}(M^{\dag})$ denote the transpose (Moore-Penrose pseudoinverse) of a vector or matrix  $M$ and $\langle\cdot, \cdot\rangle$ denote the inner products in possibly different Hilbert spaces.

For any Banach space $\mathbb{B}$, we denote
$\mathcal{D}\left(\mathbb{B}\right)=\left\{\mathbf{\Lambda}=\left(\Lambda(1),\cdots,\Lambda(L)\right) \mid \Lambda(i) \in \mathbb{B}\text{, } \forall i\in \mathcal{S}\right\}.$
Specially, if $\mathbf{\Lambda}\in \mathcal{D}\left(\mathbb{S}^{n}\right)$, then we say $\lambda$ ($\mu$) is the smallest (largest) eigenvalue of $\mathbf{\Lambda}$ if it satisfies
$$\lambda=\min\{\lambda_{1},\lambda_{2},\cdots,\lambda_{L}\}\quad
\left(\mu=\max\{\mu_{1},\mu_{2},\cdots,\mu_{L}\}\right),$$
where $\lambda_{i}$ ($\mu_{i}$) is the smallest (largest) eigenvalue of $\Lambda(i)$,
$i\in\mathcal{S}$. Without causing confusion, we sometimes also say that $\lambda$ ($\mu$) is the smallest (largest) eigenvalue of process $\left\{\Lambda(\alpha_{t})\right\}_{t\geq 0}$.

Now, let us first consider the following system.

\begin{equation}\label{AC}
\left\{
\begin{aligned}
&dX(t)=\left[A(\alpha_{t})X(t)+B(\alpha_{t})u(t)\right]dt+\left[C(\alpha_{t})X(t)+D(\alpha_{t})u(t)\right]dW(t)
\quad t\geq 0,\\
&X(0)=x,\quad \alpha_{0}=i.
\end{aligned}
\right.
\end{equation}
For simplicity, we denote the above system as $[A,C;B,D]_{\alpha}$ and $[A,C]_{\alpha}$ for the case of $B=D\equiv 0$, i.e., $[A,C]_{\alpha}\triangleq [A,C;0,0]_{\alpha}$.  Similar to Huang et al. \cite{Jianhui-Huang-2015}, we introduce the following definitions.
\begin{definition} 
Let $X(\cdot;x,i)$ be the solution to \eqref{AC} with $B=D=0$.
\begin{enumerate}
    \item[(i)] System $[A,C]_{\alpha}$ is said to be $L^{2}$-exponentially stable if there exist a constant $\lambda>0$ such that
        \begin{equation}\label{E-L-2}
        \lim_{t\rightarrow \infty}e^{\lambda t}\mathbb{E}\left|X(t;x,i)\right|^{2}=0,\quad \forall (x,i)\in\mathbb{R}^{n}\times\mathcal{S}.
        \end{equation}
    \item[(ii)] System $[A,C]_{\alpha}$ is said to be $L^{2}$-globally integrable if 
        \begin{equation}\label{G-L-2}
        \mathbb{E}\int_{0}^{\infty}\left|X(t;x,i)\right|^2dt<\infty, \quad \forall (x,i)\in\mathbb{R}^{n}\times\mathcal{S}.
        \end{equation}
    \item[(iii)] System $[A,C]_{\alpha}$ is said to be $L^{2}$-asymptotically stable if 
        \begin{equation}\label{A-L-2}
           \lim_{t\rightarrow \infty}\mathbb{E}\left|X(t;x,i)\right|^{2}=0, \quad \forall (x,i)\in\mathbb{R}^{n}\times\mathcal{S}.
        \end{equation}
  \end{enumerate}
\end{definition}
Obviously, 
the above (i)-(iii) are equivalent for system $[A,0]_{\alpha}$. Furthermore, we have the following result for the general system $[A,C]_{\alpha}$.
\begin{proposition}\label{prop-L2}
  The following statements are equivalent:
  \begin{enumerate}
    \item[(i)] System $[A,C]_{\alpha}$ is $L^{2}$-exponentially stable.
    \item[(ii)] System $[A,C]_{\alpha}$ is $L^{2}$-globally integrable.
    \item[(iii)] For any $\mathbf{\Lambda}\in\mathcal{D}\left(\mathbb{S}_{+}^{n}\right)$, the following coupled algebra equations admits a solution $\mathbf{P}\in\mathcal{D}\left(\mathbb{S}_{+}^{n}\right)$:
        \begin{equation}\label{L-1}
          P(i)A(i)+A(i)^{\top}P(i)+C(i)^{\top}P(i)C(i)+\Lambda(i)+\sum_{j=1}^{L}\pi_{ij}P(j)=0,\quad i\in\mathcal{S}.
        \end{equation}
    In this case, the solution $\mathbf{P}$ admits the following representation:
        \begin{equation}\label{P-representation}
         P(i)=\mathbb{E}\Big\{\int_{0}^{\infty}\Phi_{i}(t)^{\top}\Lambda(\alpha_{t})\Phi_{i}(t)dt\mid \alpha_{0}=i\Big\}, \quad   i\in\mathcal{S},
        \end{equation}
       where $\Phi_{i}(\cdot)$ solves the SDE:
  \begin{equation*}
\left\{
\begin{array}{l}
d\Phi_{i}(t)=A(\alpha_{t})\Phi_{i}(t)dt+C(\alpha_{t})\Phi_{i}(t)dW(t), \quad t\geq 0,\\
\Phi_{i}(0)=I,\quad \alpha_{0}=i.
\end{array}
\right.
\end{equation*}
    \item[(iv)] There exists a $\mathbf{P}\in\mathcal{D}\left(\mathbb{S}_{+}^{n}\right)$ such that:
    \begin{equation}\label{L-2}
    P(i)A(i)+A(i)^{\top}P(i)+C(i)^{\top}P(i)C(i)+\sum_{j=1}^{L}\pi_{ij}P(j)< 0,\quad i\in\mathcal{S}.
    \end{equation}
    \item[(v)] System $[A,C]_{\alpha}$ is $L^{2}$-asymptotically stable and there exists a $\mathbf{P}\in\mathcal{D}\left(\mathbb{S}^{n}\right)$ such that
        $$P(i)A(i)+A(i)^{\top}P(i)+C(i)^{\top}P(i)C(i)+\sum_{j=1}^{L}\pi_{ij}P(j)< 0,\quad i\in\mathcal{S}.$$
  \end{enumerate}
  We say that the system $[A,C]_{\alpha}$ is $L^{2}$-stable if one of the above statements holds.
\end{proposition}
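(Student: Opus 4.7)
The plan is to establish the ring of implications (i)$\Rightarrow$(ii)$\Rightarrow$(iii)$\Rightarrow$(iv)$\Rightarrow$(i), and then treat (v) separately by proving (iv)$\Leftrightarrow$(v). Two of these links are essentially free: (i)$\Rightarrow$(ii) follows by integrating the exponential bound on $\mathbb{E}|X(t;x,i)|^2$ over $[0,\infty)$, while (iii)$\Rightarrow$(iv) follows by specializing \eqref{L-1} to, say, $\Lambda(i)=I$ for every $i\in\mathcal{S}$, so that the resulting $\mathbf{P}\in\mathcal{D}(\mathbb{S}_+^n)$ automatically satisfies the strict inequality \eqref{L-2}.

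For (ii)$\Rightarrow$(iii) I would \emph{define} $P(i)$ by the candidate formula \eqref{P-representation}. Finiteness is inherited from the global integrability \eqref{G-L-2} applied column by column to the fundamental matrix $\Phi_i(\cdot)$, since the $k$-th column of $\Phi_i$ coincides with $X(\cdot;e_k,i)$. To identify $P(i)$ as a solution of \eqref{L-1}, I would rewrite the representation as $x^\top P(i)x=\mathbb{E}\bigl[\int_0^\infty X(s;x,i)^\top\Lambda(\alpha_s)X(s;x,i)\,ds\mid\alpha_0=i\bigr]$, split the integral at a generic $t>0$, and invoke the Markov property of $(X,\alpha)$ to obtain the time-invariant identity $x^\top P(i)x=\mathbb{E}\bigl[\int_0^t X^\top\Lambda(\alpha_s)X\,ds+X(t)^\top P(\alpha_t)X(t)\mid\alpha_0=i\bigr]$. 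Differentiating in $t$ at $t=0$ via Dynkin's formula (equivalently, applying It\^o to $X^\top P(\alpha_t)X$, whose drift reads $PA+A^\top P+C^\top PC+\sum_j\pi_{\alpha_t j}P(j)$ and whose martingale part uses $W$ together with the compensated jumps $\widetilde N_j$) produces \eqref{L-1} after equating quadratic forms in $x$.

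The step (iv)$\Rightarrow$(i) is a Lyapunov argument. Given $\mathbf{P}\in\mathcal{D}(\mathbb{S}_+^n)$ satisfying \eqref{L-2}, set $Q(i):=-[P(i)A(i)+A(i)^\top P(i)+C(i)^\top P(i)C(i)+\sum_j\pi_{ij}P(j)]>0$. The same It\^o computation yields $\frac{d}{dt}\mathbb{E}[X(t)^\top P(\alpha_t)X(t)]=-\mathbb{E}[X(t)^\top Q(\alpha_t)X(t)]$. Since $\mathcal S$ is finite, uniform bounds $\lambda I\le P(i)\le\mu I$ and $Q(i)\ge\gamma P(i)$ hold for some positive constants $\lambda,\mu,\gamma$, and Gr\"onwall then gives $\mathbb{E}[X(t)^\top P(\alpha_t)X(t)]\le e^{-\gamma t}x^\top P(i)x$, which is exactly the exponential decay \eqref{E-L-2}.

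Finally, (iv)$\Rightarrow$(v) is immediate since (iv)$\Rightarrow$(i) already supplies $L^2$-asymptotic stability. For (v)$\Rightarrow$(iv) we must upgrade $\mathbf{P}\in\mathcal{D}(\mathbb{S}^n)$ to $\mathcal{D}(\mathbb{S}_+^n)$: applying It\^o and integrating yields $\mathbb{E}[X(t)^\top P(\alpha_t)X(t)]-x^\top P(i)x\le 0$, and letting $t\to\infty$ while invoking $L^2$-asymptotic stability produces $P(i)\ge 0$; the perturbation $\mathbf{P}+\varepsilon I$ with $\varepsilon>0$ small (which preserves the strict inequality, since $\sum_j\pi_{ij}=0$ makes the generator term drop out and $\mathcal S$ is finite) then lies in $\mathcal{D}(\mathbb{S}_+^n)$ and still satisfies \eqref{L-2}. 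The main technical obstacle will be the It\^o-based step in (ii)$\Rightarrow$(iii), where one must carefully handle both the compensated jump martingales $\widetilde N_j$ and the Brownian part, and justify the $t$-differentiability of the conditional expectation by the integrability supplied by (ii).
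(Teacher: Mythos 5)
Your proposal is correct, and all the easy links ((i)$\Rightarrow$(ii), (iii)$\Rightarrow$(iv) via $\Lambda(i)=I$, and the $P\geq 0$ upgrade in (v)$\Rightarrow$(iv) using $\sum_j\pi_{ij}=0$) match the paper's logic, but two of your core steps take genuinely different routes. For (ii)$\Rightarrow$(iii) the paper does not define $P(i)$ directly by \eqref{P-representation} and differentiate; instead it solves the finite-horizon coupled linear ODE \eqref{prop-L2-2} forward from $P(0,i)=0$, identifies $P(\tau,i)=\mathbb{E}\{\int_0^\tau\Phi_i^{\top}\Lambda(\alpha_t)\Phi_i\,dt\mid\alpha_0=i\}$ by a time-reversal and It\^o computation, and then passes to the limit $\tau\to\infty$ by monotone convergence, obtaining \eqref{L-1} from $P(\tau+1,i)-P(\tau,i)\to 0$. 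That route buys you a proof with no differentiability issue at all --- the monotonicity in $\tau$ does the work --- whereas your route requires the Markov-property identity plus the Dynkin/It\^o differentiation at $t=0$, which is workable (the integrand $s\mapsto\mathbb{E}[X(s)^{\top}(\Lambda(\alpha_s)+\mathcal{A}P(\alpha_s))X(s)]$ is continuous at $0$ by dominated convergence and right-continuity of $\alpha$) but is exactly the technical point you flag; you should also note explicitly that $P(i)>0$ follows from invertibility of $\Phi_i(t)$. For (iv)$\Rightarrow$(i) your Gr\"onwall argument $\phi(t)\le\phi(0)e^{-\gamma t}$ with $\phi(t)=\mathbb{E}\langle P(\alpha_t)X(t),X(t)\rangle$ and $Q(i)\ge\gamma P(i)$ is cleaner and more self-contained than the paper's, which perturbs to the system $[A+\varepsilon I,C]_{\alpha}$, invokes global integrability of the perturbed system, and observes that $e^{\lambda t}X(t)$ solves $[A+\lambda I,C]_{\alpha}$. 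Finally, your cycle makes the paper's separate implications (iv)$\Rightarrow$(ii) (via Proposition \ref{prop-FSDE}) and (ii)$\Rightarrow$(v) (via the uniform-continuity argument for $t\mapsto\mathbb{E}|X(t)|^2$) unnecessary, since you recover (v) from (iv)$\Rightarrow$(i); the logical coverage of all five statements is complete either way.
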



\begin{proof}
  The implications $(i)\Rightarrow (ii)$ and $(iii)\Rightarrow(iv)$ are clear and the proof of $(iv)\Rightarrow (ii)$ can be consider as a special case of Proposition \ref{prop-FSDE} with $b(\cdot)=\sigma(\cdot)=0$.

 $(ii)\Rightarrow (iii)$. 
 Let $\mathbf{\Lambda}\in\mathcal{D}\left(\mathbb{S}_{+}^{n}\right)$ and 
we first consider the following coupled linear differential  equations:
  \begin{equation}\label{prop-L2-2}
  \left\{
  \begin{aligned}
  &\Dot{P}(t,i)=P(t,i)A(i)+A(i)^{\top}P(t,i)+C(i)^{\top}P(t,i)C(i)+\Lambda(i)+\sum_{j=1}^{L}\pi_{ij}P(t,j),\, t\geq 0,\\
  &P(0,i)=0, \quad i\in\mathcal{S}.
  \end{aligned}
  \right.
  \end{equation}
  Obviously, the above equation 
  admits a unique solution $\{P(\cdot,i)\}_{i\in\mathcal{S}}$. For any fixed $\tau>0$, set
  $$P^{\tau}(t,i)\triangleq P(\tau-t,i),\quad t\in[0,\tau],\quad i\in\mathcal{S}.$$
  Then $\left\{P^{\tau}(\cdot,i)\right\}_{i\in\mathcal{S}}$ satisfies
  \begin{equation*}
    \left\{%
    \begin{aligned}
        &\Dot{P}^{\tau}(t,i)=-\big[P^{\tau}(t,i)A(i)+A(i)^{\top}P^{\tau}(t,i)+C(i)^{\top}P^{\tau}(t,i)C(i)+\Lambda(i)
        +\sum_{j=1}^{L}\pi_{ij}P^{\tau}(t,j)\big],\\
   & P^{\tau}(\tau,i)=0, \quad t\in[0,\tau], \quad i\in\mathcal{S}.
    \end{aligned}
    \right.
    \end{equation*}
  Applying It\^o's rule to $\left<P^{\tau}(t,\alpha_{t})X(t),X(t)\right>$, one has
  \begin{align*}
 -\left<P(\tau,i)x,x\right>&= -\left<P^{\tau}(0,i)x,x\right>=\mathbb{E}\big[\left<P^{\tau}(\tau,\alpha_{\tau})X(\tau),X(\tau)\right>
 -\left<P^{\tau}(0,i)x,x\right>\big]\\
  &=\mathbb{E}\Big\{\int_{0}^{\tau}\big<\big[\Dot{P}^{\tau}(t,\alpha_{t})+P^{\tau}(t,\alpha_{t})A(\alpha_{t})
  +A(\alpha_{t})^{\top}P^{\tau}(t,\alpha_{t})+C(\alpha_{t})^{\top}P^{\tau}(t,\alpha_{t})C(\alpha_{t})\\
  &\qquad\qquad\quad +\sum_{j=1}^{L}\pi_{\alpha_{t}j}P^{\tau}(t,j)\big]X(t),X(t)\big>dt\mid\alpha_{0}=i,X(0)=x\Big\}\\
  &=-\mathbb{E}\Big\{\int_{0}^{\tau}\big<\Lambda(\alpha_{t}) X(t),X(t)\big>dt\mid\alpha_{0}=i,X(0)=x\Big\}\\
  &=-\mathbb{E}\Big\{\int_{0}^{\tau}\big<\Phi_{i}(t)^{\top}\Lambda(\alpha_{t})\Phi_{i}(t) x,x\big>dt\mid\alpha_{0}=i\Big\},\quad\forall x\in\mathbb{R}^n,\quad\forall i\in\mathcal{S}.
  \end{align*}
Accordingly, by the arbitrariness of $x$, we have
$$P(\tau,i)=\mathbb{E}\Big\{\int_{0}^{\tau}\Phi_{i}(t)^{\top}\Lambda(\alpha_{t})\Phi_{i}(t)dt\mid \alpha_{0}=i\Big\},\quad \tau\geq 0,\quad \forall i\in\mathcal{S}.$$
Noting that $\mathbf{\Lambda}\in\mathcal{D}\left(\mathbb{S}_{+}^{n}\right)$, which implies that $P(\tau,i)$ is increasing in $\tau$. On the other hand, from the $L^{2}$-global integrability of system [A,C]$_{\alpha}$, we have
$\mathbb{E}\Big\{\int_{0}^{\infty}\Phi_{i}(t)^{\top}\Lambda(\alpha_{t})\Phi_{i}(t)dt\mid \alpha_{0}=i\Big\}<\infty, i\in\mathcal{S}.$
Hence, by monotone convergence theorem, for any $i\in\mathcal{S}$, there exists a $P(i)\in\mathbb{S}_{+}^{n}$ such that
$$P(i)=\lim_{\tau\rightarrow\infty}P(\tau,i)=\mathbb{E}\Big\{\int_{0}^{\infty}\Phi_{i}(t)^{\top}\Lambda(\alpha_{t})\Phi_{i}(t)dt\mid \alpha_{0}=i\Big\}.$$
Moreover, by \eqref{prop-L2-2}, one has
$$P(\tau+1,i)-P(\tau,i)=\int_{\tau}^{\tau+1}\big[P(t,i)A(i)+A(i)^{\top}P(t,i)+C(i)^{\top}P(t,i)C(i)+\Lambda(i)
+\sum_{j=1}^{L}\pi_{ij}P(t,j)\big]dt.$$
Letting $\tau\rightarrow\infty$ and the desired result  follows from the above equation.

$(iv)\Rightarrow (i)$. Let $\mathbf{P}\in\mathcal{D}\left(\mathbb{S}_{+}^{n}\right)$ satisfies \eqref{L-2}.
    Then, there exists a 
    small value $\varepsilon>0$ such that
     $$P(i)[A(i)+\varepsilon I]+[A(i)+\varepsilon I]^{\top}P(i)+C(i)^{\top}P(i)C(i)+\sum_{j=1}^{L}\pi_{ij}P(j)< 0,\quad i\in\mathcal{S},$$
    which is equivalent to  the $L^{2}$-global integrability of $[A+\varepsilon I,C]_{\alpha}$. 
    On the other hand, note that
    $$
    \begin{aligned}
    d[e^{\lambda t}X(t)]&=\left[\lambda I+A(\alpha_{t})\right]e^{\lambda t}X(t)dt+C(\alpha_{t})e^{\lambda t}X(t)dW(t).
    \end{aligned}$$
    Hence, $e^{\lambda t}X(t)$ is the solution to system $[A+\lambda I,C]_{\alpha}$ and 
    the result \eqref{E-L-2} follows by setting $\lambda\leq\dfrac{\varepsilon}{2}$.

$(ii)\Rightarrow (v)$. We only need to prove the system $[A,C]_{\alpha}$ is $L^{2}$-asymptotically stable. Clearly,
\begin{equation}\label{X-2}
\begin{aligned}
\mathbb{E}\left[\left|X(t)\right|^{2}\right]&=\left|x\right|^{2}+\mathbb{E}\int_{0}^{t}
\left<\left[A(\alpha_s)+A(\alpha_s)^{\top}+C(\alpha_s)^{\top}C(\alpha_s)\right]X(s),X(s)\right>ds\\
&\leq \left|x\right|^{2}+K\mathbb{E}\int_{0}^{\infty}\left|X(s)\right|^{2}\leq M<\infty,\quad t\geq 0,
\end{aligned}
\end{equation}
where $K$ is the largest eigenvalue of process $\big|A(\alpha)+A(\alpha)^{\top}+C(\alpha)^{\top}C(\alpha)\big|$ and $M>0$ is a sufficiently large constant  independent of time $t$.
Therefore, for any $h>0$, we have
$$\begin{aligned}
  \left|\mathbb{E}\left[\left|X(t+h)\right|^{2}\right]-\mathbb{E}\left[\left|X(t)\right|^{2}\right]\right|
&\leq\mathbb{E}\int_{t}^{t+h}\left|
\left<\left[A(\alpha_s)+A(\alpha_s)^{\top}+C(\alpha_s)^{\top}C(\alpha_s)\right]X(s),X(s)\right>\right|ds\\
&\leq MKh,
\end{aligned}$$
which implies that the function $t\mapsto\mathbb{E}\left|X(t)\right|^{2}$ is uniformly continuous on $[0,\infty)$. Hence, together with the square-integrability of $X(\cdot)$,  the equation \eqref{A-L-2} follows.

$(v)\Rightarrow (ii)$. Let
$$\Lambda(i)=-\big[P(i)A(i)+A(i)^{\top}P(i)+C(i)^{\top}P(i)C(i)+\sum_{j=1}^{L}\pi_{ij}P(j)\big]>0, \quad i\in\mathcal{S}.$$
Then by applying It\^o's rule to $\left<P(\alpha_{t})X(t),X(t)\right>$, we obtain
  $$
  \begin{aligned}
     \mathbb{E}\left[\left<P(\alpha_{0})X(0),X(0)\right>-\left<P(\alpha_{T})X(T),X(T)\right>\right]
    &=\mathbb{E}\int_{0}^{T}\left<\Lambda (\alpha_{t})X(t),X(t)\right>dt.
  \end{aligned}$$
  Let $\lambda>0$ be the smallest eigenvalue of $\mathbf{\Lambda}\equiv\left[\Lambda(1),\Lambda(2),\cdots,\Lambda(L)\right]$. Then let $T\rightarrow\infty$, we have
  $$\lambda\mathbb{E}\int_{0}^{\infty}\left| X(t)\right|^{2}dt\leq\mathbb{E}\int_{0}^{\infty}\left<\Lambda (\alpha_{t})X(t),X(t)\right>dt=\left<P(\alpha_{0})X(0),X(0)\right><\infty.$$
 Therefore, the system [A,C]$_{\alpha}$ is $L^{2}$-globally integrable.

\end{proof}

\begin{remark}\label{rmk-L2}\rm
Suppose that $\mathcal{S}$ is a singleton set, that is, $A(\alpha_{t})\equiv A$ and $C(\alpha_{t})\equiv C$ for any $t\in[0,\infty)$.  Then the system $[A,C]_{\alpha}$ will degenerate into system $[A,C]$ and it follows from \eqref{L-2} that the system $[A,C]$ is $L^{2}$-stable if and only if there exists a $P\in\mathbb{S}_{+}^{n}$ such that
\begin{equation}\label{eq-L2}
PA+A^{\top}P+C^{\top}PC<0.
\end{equation}
Such a result is consistent with the case of linear diffusion system without regime-switching jumps studied in Huang et al. \cite{Jianhui-Huang-2015}. 
Therefore, the Proposition \ref{prop-L2} can be seen as a further generalization of \cite{Jianhui-Huang-2015} to the model with regime-switching jumps.
\end{remark}

\begin{remark}\label{rmk-stable} \rm
From (iv) of Proposition \ref{prop-L2}, one can easily obtain a 
sufficient condition to guarantee the L$^{2}$-stability of the system [A,C]$_{\alpha}$, that is,
$A(i)+A(i)^{\top}+C(i)^{\top}C(i)<0,\,i\in\mathcal{S}.$ Further, if
$A(i)+A(i)^{\top}+C(i)^{\top}C(i)>0,\, i\in\mathcal{S}$, then the system [A,C]$_{\alpha}$ is not L$^{2}$-stable. In fact, let $\mu>0$ be the smallest eigenvalue of process $\left\{A(\alpha_{t})+A(\alpha_{t})^{\top}+C(\alpha_{t})^{\top}C(\alpha_{t})\right\}_{t\geq 0}$. It follows from \eqref{X-2} that 
\begin{equation}
\begin{aligned}
\mathbb{E}\left[\left|X(t)\right|^{2}\right]&=\left|x\right|^{2}+\mathbb{E}\int_{0}^{t}
\left<\left[A(\alpha_s)+A(\alpha_s)^{\top}+C(\alpha_s)^{\top}C(\alpha_s)\right]X(s),X(s)\right>ds\\
&\geq \left|x\right|^{2}+\mu\mathbb{E}\int_{0}^{t}\left|X(s)\right|^{2}ds,\quad t\geq 0,
\end{aligned}
\end{equation}
which implies that
$\mathbb{E}\left[\left|X(t)\right|^{2}\right]\geq \left|x\right|^{2}e^{\mu t}.$
Hence, the system [A,C]$_{\alpha}$ is not L$^{2}$-stable.
\end{remark}

Now, consider the following linear SDE  in infinite horizon $[0,\infty)$:
 \begin{equation}\label{FSDE}
\left\{
\begin{aligned}
&dX(t)=\left[A(\alpha_{t})X(t)+b(t)\right]dt+\left[C(\alpha_{t})X(t)+\sigma(t)\right]dW(t), \quad t\geq 0,\\
&X(0)=x,\quad \alpha_{0}=i.
\end{aligned}
\right.
\end{equation}
We have the following 
unique solvability of \eqref{FSDE} and the priori estimate of the 
solution.
\begin{proposition}\label{prop-FSDE}
  Suppose that system $[A,C]_{\alpha}$ is $L^{2}$-stable and $b(\cdot),\sigma(\cdot)\in L_{\mathbb{F}}^{2}(\mathbb{R}^{n})$. Then, for any 
  $(x,i)\in\mathbb{R}^{n}\times\mathcal{S}$, the SDE \eqref{FSDE} admits a unique solution in $L_{\mathbb{F}}^{2}(\mathbb{R}^n)$ and satisfies
  \begin{equation}\label{FSDE-E}
   \mathbb{E}\int_{0}^{\infty}|X(t)|^{2}dt\leq K\left[ |x|^{2}+\mathbb{E}\int_{0}^{\infty}\big[|b(t)|^{2}+|\sigma(t)|^{2}\big]dt\right],\quad \text{for some }K>0.
  \end{equation}
\end{proposition}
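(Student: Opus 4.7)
The plan is to combine standard linear-SDE well-posedness with a Lyapunov-type estimate driven by the matrix $\mathbf{P}$ supplied by Proposition~\ref{prop-L2}(iv). First I would note that since $A(\alpha_t), C(\alpha_t)$ are uniformly bounded (the Markov chain has finite state space) and $b,\sigma\in L^{2}_{\mathbb{F}}(\mathbb{R}^n)$, classical results give the unique existence of a solution $X(\cdot)\in L^{2,loc}_{\mathbb{F}}(\mathbb{R}^n)$. The task then reduces to upgrading the local bound to the global estimate \eqref{FSDE-E}.

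To do this, I would use $L^2$-stability to pick $\mathbf{P}\in\mathcal{D}(\mathbb{S}^n_+)$ and $\mathbf{\Lambda}\in\mathcal{D}(\mathbb{S}^n_+)$ with
$$P(i)A(i)+A(i)^\top P(i)+C(i)^\top P(i)C(i)+\sum_{j=1}^{L}\pi_{ij}P(j)=-\Lambda(i),\quad i\in\mathcal{S},$$
and let $\lambda>0$ be the smallest eigenvalue of $\mathbf{\Lambda}$. Applying It\^o's rule to $\langle P(\alpha_t)X(t),X(t)\rangle$ on $[0,T]$, using a localizing sequence $\tau_n\uparrow\infty$ to handle the $dW$ and $d\widetilde{N}_j$ integrals, I arrive at
$$\mathbb{E}\langle P(\alpha_{T\wedge\tau_n})X(T\wedge\tau_n),X(T\wedge\tau_n)\rangle-\langle P(i)x,x\rangle = \mathbb{E}\int_0^{T\wedge\tau_n}\!\big[-\langle\Lambda(\alpha_t)X,X\rangle+2\langle P(\alpha_t)X,b\rangle+2\langle P(\alpha_t)C(\alpha_t)X,\sigma\rangle+\langle P(\alpha_t)\sigma,\sigma\rangle\big]dt.$$
Splitting the cross terms with Young's inequality $2\langle PX,b\rangle\leq\varepsilon|X|^2+C_\varepsilon|b|^2$ (and similarly for the $\sigma$ term), dropping the non-negative $\langle P(\alpha_{T\wedge\tau_n})X(T\wedge\tau_n),X(T\wedge\tau_n)\rangle$ on the left, and choosing $\varepsilon<\lambda/4$ to absorb the $|X|^2$ contribution into the left-hand side, I obtain
$$\tfrac{\lambda}{2}\,\mathbb{E}\!\int_0^{T\wedge\tau_n}|X(t)|^2\,dt \leq \langle P(i)x,x\rangle + C\,\mathbb{E}\!\int_0^{\infty}\!\big[|b(t)|^2+|\sigma(t)|^2\big]\,dt.$$
Sending $n\to\infty$ by Fatou/monotone convergence and then $T\to\infty$ delivers \eqref{FSDE-E}. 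Uniqueness in $L^{2}_{\mathbb{F}}(\mathbb{R}^n)$ is immediate: the difference of two solutions satisfies the homogeneous system $[A,C]_{\alpha}$ with zero initial data, which by $L^2$-stability is identically zero.

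The main obstacle will be the bookkeeping of It\^o's formula for the regime-switching jump diffusion $\langle P(\alpha_t)X(t),X(t)\rangle$, specifically ensuring that the jump contributions from the Markov chain produce exactly the $\sum_j\pi_{ij}P(j)$ term needed to invoke Proposition~\ref{prop-L2}(iv), and that the $d\widetilde{N}_j$ stochastic integrals can be made true martingales by localization (requiring $X\in L^{2,loc}_\mathbb{F}$, which is exactly what was secured in the first step). Once these technicalities are carried out, the Lyapunov estimate closes in a uniform-in-$T$ fashion and the proof goes through.
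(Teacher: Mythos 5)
Your proposal is correct and follows essentially the same route as the paper: both hinge on It\^o's rule applied to the Lyapunov function $\langle P(\alpha_t)X(t),X(t)\rangle$ with $\mathbf{P}$ taken from Proposition~\ref{prop-L2}(iv), followed by Young's inequality on the cross terms $2\langle P b + C^{\top}P\sigma, X\rangle$. The only cosmetic difference is in how the estimate is closed: you drop the non-negative terminal term and absorb $\varepsilon|X|^2$ directly into the negative-definite $-\langle\Lambda(\alpha_t)X,X\rangle$ term, whereas the paper keeps the identity as a differential inequality for $\phi(t)=\mathbb{E}|P(\alpha_t)^{1/2}X(t)|^2$, applies Gronwall, and then integrates in $t$; both yield \eqref{FSDE-E}.
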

\begin{proof}
The unique solvability is obvious. Now, we prove the unique solution to SDE \eqref{FSDE} is in $L_{\mathbb{F}}^{2}(\mathbb{R}^n)$ and satisfies the priori estimate \eqref{FSDE-E}.
By the item (iv) in Proposition \ref{prop-L2}, there exists 
$\mathbf{P}\in\mathcal{D}\left(\mathbb{S}_{+}^{n}\right)$ satisfies condition \eqref{L-2}. Let
 $$\Lambda(i)\triangleq-\big[P(i)A(i)+A(i)^{\top}P(i)+C(i)^{\top}P(i)C(i)+\sum_{j=1}^{L}\pi_{ij}P(j)\big]>0.$$
Applying It\^o's rule to $\left<P(\alpha_{t})X(t),X(t)\right>$, we obtain
\begin{align*}
&\mathbb{E}\big|P(\alpha_{t})^{\frac{1}{2}}X(t)\big|^2-\left<P(i)x,x\right>=\mathbb{E}\left[\left<P(\alpha_{t})X(t),X(t)\right>-\left<P(i)x,x\right>\right]\\
=&\mathbb{E}\int_{0}^{t}[-\big<\Lambda(\alpha_{s}) X(s),X(s)\big>
+2\big<P(\alpha_{s})b(s)+C(\alpha_{s})^{\top}P(\alpha_{s})\sigma(s),X(s)\big>
+\big<P(\alpha_{s})\sigma(s),\sigma(s)\big>]ds\\
=&\mathbb{E}\int_{0}^{t}[-\big<\Psi(\alpha_{s})P(\alpha_{s})^{\frac{1}{2}} X(s),P(\alpha_{s})^{\frac{1}{2}}X(s)\big>
+2\big<\eta(s),P(\alpha_{s})^{\frac{1}{2}}X(s)\big>+\big<P(\alpha_{s})\sigma(s),\sigma(s)\big>]ds\\
\leq&\mathbb{E}\int_{0}^{t}\big[-\mu\big|P(\alpha_{s})^{\frac{1}{2}}X(s)\big|^2+\frac{\mu}{2}\big|P(\alpha_{s})^{\frac{1}{2}}X(s)\big|^2+\frac{2}{\mu}\big|\eta(s)\big|^2+\lambda\big|\sigma(s)\big|^2\big]ds\\
=&\mathbb{E}\int_{0}^{t}\big[-\frac{\mu}{2}\big|P(\alpha_{s})^{\frac{1}{2}}X(s)\big|^2+\frac{2}{\mu}\big|\eta(s)\big|^2+\lambda\big|\sigma(s)\big|^2\big]ds,
\end{align*}
where $\mu$ is the smallest eigenvalue of $\mathbf{\Psi}\equiv\left[\Psi(1),\cdots,\Psi(L)\right]$, $\lambda$ is the largest eigenvalue of $\mathbf{P}$, and
$$ \eta(s)\triangleq P(\alpha_{s})^{\frac{1}{2}}b(s)
+P(\alpha_{s})^{-\frac{1}{2}}C(\alpha_{s})^{\top}P(\alpha_{s})\sigma(s),\quad
\Psi(i)\triangleq P(i)^{-\frac{1}{2}}\Lambda(i)P(i)^{-\frac{1}{2}}>0,\quad i\in\mathcal{S}.$$

Let
$\phi(t)\triangleq \mathbb{E}\big[|P(\alpha_{t})^{\frac{1}{2}}X(t)|^{2}\big], \,
\beta(t)\triangleq \frac{2}{\mu}\mathbb{E}\big[|\eta(t)|^2\big]+\lambda\mathbb{E}\big[|\sigma(t)|^2\big]$.
Then, by Gronwall's inequality, we have
$$\phi(t) \leq \phi(0)e^{-\frac{\lambda}{2}t}+\int_{0}^{t}e^{-\frac{\lambda}{2}(t-s)}\beta(s)ds.$$
Integrating both sides of the above equation leads to
$$
\begin{aligned}
\int_{0}^{\infty}\phi(t)dt
&\leq\int_{0}^{\infty}\left[\phi(0)e^{-\frac{\lambda}{2}t}+\int_{0}^{t}e^{-\frac{\lambda}{2}(t-s)}\beta(s)ds\right]dt\\
&=\frac{2}{\lambda}\phi(0)+\frac{2}{\lambda}\int_{0}^{\infty}\beta(s)ds.
\end{aligned}
$$
Therefore
\begin{equation}\label{SDE-p1}
\begin{aligned}
\mathbb{E}\int_{0}^{\infty}|P(\alpha_{t})^{\frac{1}{2}}X(t)|^{2}dt&=\int_{0}^{\infty}\phi(t)dt
\leq\frac{2}{\lambda}\phi(0)+\frac{2}{\lambda}\int_{0}^{\infty}\beta(t)dt\\
&=\frac{2}{\lambda}\big<P(i)x,x\big>+\mathbb{E}\int_{0}^{\infty}\left[\frac{2}{\mu}|\eta(t)|^2+\lambda|\sigma(t)|^2\right]dt.
\end{aligned}
\end{equation}
Note that
\begin{equation}\label{SDE-p2}
|\eta(t)|^2\leq K\left[|b(t)|^2+|\sigma(t)|^2\right],\qquad \text{for some } K>0.
\end{equation}
Therefore, the desired result followes by substituting \eqref{SDE-p2} into \eqref{SDE-p1}.
\end{proof}

To construct the closed-loop strategy for the inhomogeneous SLQ control problem in infinite horizon, we need to consider the following linear inhomogeneous BSDE with $\varphi(\cdot)\in L_{\mathbb{F}}^{2}(\mathbb{R}^{n})$ in infinite horizon:
\begin{equation}\label{BSDE}
  dY(t)=-\left[A(\alpha_{t})^{\top}Y(t)+C(\alpha_{t})^{\top}Z(t)+\varphi(t)\right]dt+Z(t)dW(t)+\mathbf{\Gamma}(t)\cdot d\mathbf{\widetilde{N}}(t), \quad t\in[0,\infty).
\end{equation}

\begin{definition}
  A triple $\left(Y(\cdot),Z(\cdot),\mathbf{\Gamma}(\cdot)\right)$ is called the $L^{2}$-stable adapted solution of \eqref{BSDE} if $\left(Y(\cdot),Z(\cdot),\mathbf{\Gamma}(\cdot)\right)\in L_{\mathbb{F}}^{2}(\mathbb{R}^{n})\times L_{\mathbb{F}}^{2}(\mathbb{R}^{n})\times\mathcal{D}\left(L_{\mathcal{P}}^{2}(\mathbb{R}^{n})\right)$ and satisfies
        $$
        \begin{aligned}
        Y(t)&=Y(0)-\int_{0}^{t}\left[A(\alpha_{s})^{\top}Y(s)+C(\alpha_{s})^{\top}Z(s)+\varphi(s)\right]ds\\
        &\quad+\int_{0}^{t}Z(s)dW(s)+\int_{0}^{t}\mathbf{\Gamma}(s)\cdot d\mathbf{\widetilde{N}}(s), \quad \forall t\in[0,\infty) \quad  a.s.
        \end{aligned}$$
\end{definition}

Similar to Proposition \ref{prop-FSDE}, we have the following result.
\begin{proposition}\label{prop-BSDE}
   Suppose that system $[A, C]_{\alpha}$ is $L^{2}$-stable and $\varphi(\cdot)\in L_{\mathbb{F}}^{2}(\mathbb{R}^{n})$. Then the BSDE \eqref{BSDE} admits a unique $L^{2}$-stable adapted solution.
\end{proposition}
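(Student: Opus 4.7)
The plan is the \emph{truncation approximation technique} signalled in the introduction. For each $T > 0$, I would first solve the finite-horizon BSDE obtained by imposing the terminal condition $Y^T(T) = 0$; standard theory for linear BSDEs driven by $(W, \mathbf{\widetilde{N}})$ (a contraction argument in a weighted $L^2$-norm on bounded intervals, using the martingale representation for the augmented filtration) produces a unique adapted triple
\begin{equation*}
(Y^T, Z^T, \mathbf{\Gamma}^T) \in L_{\mathbb{F}}^{2}([0,T];\mathbb{R}^{n}) \times L_{\mathbb{F}}^{2}([0,T];\mathbb{R}^{n}) \times \mathcal{D}(L_{\mathcal{P}}^{2}([0,T];\mathbb{R}^{n})).
\end{equation*}
I would extend each triple by zero to $[0, \infty)$ and show that the resulting sequence is Cauchy in $L^{2}$ as $T \to \infty$, with its limit solving \eqref{BSDE} globally.

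The key $T$-uniform a priori estimate is cleanest via a forward-backward duality. For fixed $t \geq 0$ and $\mathcal{F}_t$-measurable $\xi \in L^2(\Omega; \mathbb{R}^n)$, let $X$ solve the forward system $[A, C]_{\alpha}$ on $[t, T]$ with initial value $\xi$ at time $t$. A direct It\^o computation shows that the terms involving $A, A^\top, C, C^\top$ in $d(X(s)^\top Y^T(s))$ cancel exactly, so that integrating from $t$ to $T$, taking conditional expectations, and using $Y^T(T) = 0$ yields the representation
\begin{equation*}
\xi^\top Y^T(t) = \mathbb{E}\Big[\int_t^T X(s)^\top \varphi(s) \, ds \,\Big|\, \mathcal{F}_t\Big].
\end{equation*}
Combined with the $L^2$-integrability of the fundamental solution encoded in \eqref{P-representation} and Proposition \ref{prop-FSDE}, this yields a $T$-uniform bound on $\mathbb{E}\int_0^\infty |Y^T(t)|^2 \, dt$ and a candidate limit $Y \in L_{\mathbb{F}}^{2}(\mathbb{R}^{n})$.

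To establish the Cauchy property of the full triple, I would apply It\^o's rule to $\langle P(\alpha_t)(Y^{T_2} - Y^{T_1})(t), (Y^{T_2} - Y^{T_1})(t)\rangle$ on $[0, T_1]$ with $\mathbf{P} \in \mathcal{D}(\mathbb{S}_{+}^{n})$ supplied by Proposition \ref{prop-L2}(iv). Since the difference satisfies a homogeneous version of \eqref{BSDE} on $[0, T_1]$ with terminal value $Y^{T_2}(T_1)$, this estimate bounds $\|(Z^{T_2} - Z^{T_1}, \mathbf{\Gamma}^{T_2} - \mathbf{\Gamma}^{T_1})\|_{L^2}^2$ in terms of $\mathbb{E}|Y^{T_2}(T_1)|^2$, which tends to zero as $T_1 \to \infty$ by the tail of the previous bound. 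Passing to the limit yields $(Y, Z, \mathbf{\Gamma})$ solving \eqref{BSDE} on $[0,\infty)$. Uniqueness then follows by applying the same It\^o-Lyapunov estimate to the difference of two candidate $L^{2}$-stable solutions on $[0, T]$ and letting $T \to \infty$, using that $\mathbb{E}|Y_1(T) - Y_2(T)|^2 \to 0$ along a subsequence for any two $L^{2}$-integrable solutions.

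The step I expect to be the main obstacle is the It\^o-Lyapunov calculation in the presence of Markov chain jumps. Applying It\^o to $\langle P(\alpha_t) Y(t), Y(t)\rangle$ for a solution of \eqref{BSDE} produces, besides the standard continuous drift quadratic form, a compensated jump contribution
\begin{equation*}
\sum_{j=1}^{L} \pi_{\alpha_t j}\bigl[\langle (P(j) - P(\alpha_t)) Y(t), Y(t)\rangle + 2\langle P(j) \Gamma_j(t), Y(t)\rangle + \langle P(j) \Gamma_j(t), \Gamma_j(t)\rangle\bigr],
\end{equation*}
whose cross-terms must be absorbed via Young's inequality while preserving a coercive lower bound on both $|Y(t)|^{2}$ and $\sum_{j} \pi_{\alpha_t j}|\Gamma_j(t)|^{2}$. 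Moreover, the quadratic form in $Y$ emerging from the BSDE drift involves $P A^\top + A P$, whereas Proposition \ref{prop-L2}(iv) supplies $PA + A^\top P + C^\top P C + \sum_j \pi_{ij} P(j) < 0$, so some structural care (or the duality route outlined above) is required to close the estimate in terms of the forward Lyapunov inequality actually at hand.
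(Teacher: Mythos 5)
Your overall strategy (truncate, solve on $[0,T]$ with zero terminal data, extend by zero, show the family is Cauchy, pass to the limit) is exactly the paper's, but the two estimates on which everything rests are not closed, and one of them is claimed to follow from an argument that does not deliver it. The forward--backward duality indeed gives $Y^{T}(t)=\mathbb{E}\big[\int_{t}^{T}\Phi(s;t)^{\top}\varphi(s)\,ds\mid\mathcal{F}_{t}\big]$, hence by Cauchy--Schwarz and $L^{2}$-stability of the fundamental solution the bound $\mathbb{E}|Y^{T}(t)|^{2}\leq K\,\mathbb{E}\int_{t}^{\infty}|\varphi(s)|^{2}ds$, uniformly in $T$. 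But integrating this in $t$ produces $K\,\mathbb{E}\int_{0}^{\infty}s\,|\varphi(s)|^{2}ds$, which is not finite for a general $\varphi\in L_{\mathbb{F}}^{2}(\mathbb{R}^{n})$; so the claimed ``$T$-uniform bound on $\mathbb{E}\int_{0}^{\infty}|Y^{T}(t)|^{2}dt$'' does not follow from the duality. The $L^{2}(dt\times d\mathbb{P})$ bound on $Y$ is precisely the hard half of the paper's Lemma \ref{lem-BSDE-E} (its Step 2), and there it is obtained only \emph{after} $Z$ and $\mathbf{\Gamma}$ have been controlled, via a second Lyapunov computation that bounds $\mathbb{E}\int_{0}^{\infty}|Y|^{2}dt$ by $\mathbb{E}\int_{0}^{\infty}[|\varphi|^{2}+|Z|^{2}+\sum_{j}\pi_{\alpha j}|\Gamma_{j}|^{2}]dt$.

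The second gap is the one you flag yourself but do not resolve: applying It\^o to $\langle P(\alpha_{t})Y(t),Y(t)\rangle$ produces the quadratic form $P A^{\top}+A P$, which is not what \eqref{L-2} controls, so your Cauchy estimate for $(Z^{T_2}-Z^{T_1},\mathbf{\Gamma}^{T_2}-\mathbf{\Gamma}^{T_1})$ does not close as written. The paper's device is to run the It\^o computation on $\langle P(\alpha_{t})^{-1}Y(t),Y(t)\rangle$ instead: this turns the drift contribution into $\langle[A^{\top}P+PA]P^{-1}Y,P^{-1}Y\rangle$, i.e.\ exactly the form supplied by the forward Lyapunov inequality, and after strengthening \eqref{L-2} to absorb a factor $(1+\varepsilon)$ on $C^{\top}PC$ and on the jump term $\sum_{j}\pi_{ij}P(j)$, the cross terms in $Z$ and $\Gamma_{j}$ complete to squares with coercive remainders $\frac{\varepsilon}{1+\varepsilon}\langle P^{-1}Z,Z\rangle$ and $\frac{\varepsilon}{1+\varepsilon}\sum_{j}\pi_{\alpha j}\langle P(j)^{-1}\Gamma_{j},\Gamma_{j}\rangle$. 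Without this inversion (or an equivalent structural fix), neither your a priori estimate nor your uniqueness argument goes through. I would recommend replacing the duality step by the two-step Lyapunov estimate of Lemma \ref{lem-BSDE-E}; once that lemma is available, your truncation and limiting argument is essentially the paper's existence proof, and uniqueness is immediate from the estimate with $\varphi=0$.
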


The result of Proposition \ref{prop-BSDE} for the  SLQ problem without regime-switching jumps has been derived by Sun et al. \cite{Sun.JR_2016_IZSLQI}. Although the proof idea of Proposition \ref{prop-BSDE} follows essentially from 
\cite{Sun.JR_2016_IZSLQI}, it is more intricate in proving the priori estimate due to the regime-switching jumps. 

\begin{lemma}\label{lem-BSDE-E}
Suppose that system $[A,C]_{\alpha}$ is $L^{2}$-stable and $\varphi(\cdot)\in L_{\mathbb{F}}^{2}(\mathbb{R}^{n})$. Let $\left(Y(\cdot),Z(\cdot),\mathbf{\Gamma}(\cdot)\right)$ be the $L^{2}$-stable adapted solution to \eqref{BSDE}. Then
\begin{equation}\label{BSDE-E}
  \mathbb{E}\int_{0}^{\infty}\big[|Y(t)|^{2}+|Z(t)|^{2}+\sum_{j=1}^{L}\pi_{\alpha_{t}j}|\Gamma_{j}(t)|^2\big]dt
  \leq K\mathbb{E}\int_{0}^{\infty}|\varphi(t)|^{2}dt.
\end{equation}
Hereafter, $K>0$ represents a generic constant that can be different from line to line.
\end{lemma}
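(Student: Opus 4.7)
My plan is to combine a duality representation for $Y$ (obtained by pairing the BSDE with the forward homogeneous system $[A,C]_{\alpha}$) with an It\^o-based energy identity for $|Y|^{2}$ on a truncated horizon $[0,T]$. Throughout I exploit that Proposition \ref{prop-L2} upgrades $L^{2}$-stability to $L^{2}$-exponential stability, so there exist $\lambda,K>0$ with $\mathbb{E}|X(s;t_{0},x,i)|^{2}\leq Ke^{-\lambda(s-t_{0})}|x|^{2}$ uniformly in $i\in\mathcal{S}$.

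For fixed $(t_{0},x,i)$ let $X(\cdot)=X(\cdot;t_{0},x,i)$ solve the homogeneous system $[A,C]_{\alpha}$ with $X(t_{0})=x$, $\alpha_{t_{0}}=i$. Applying It\^o's rule to $\langle Y(s),X(s)\rangle$ on $[t_{0},T]$, the quadratic covariation receives only the Brownian contribution $\langle Z,CX\rangle\,ds$ because $X$ has no jumps, and the drift terms from $A^{\top}Y$, $C^{\top}Z$ cancel with those from $AX$ together with the cross variation via $\langle A^{\top}Y,X\rangle=\langle Y,AX\rangle$ and $\langle C^{\top}Z,X\rangle=\langle Z,CX\rangle$; only $-\langle\varphi,X\rangle\,ds$ survives in the drift. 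Taking conditional expectation given $\mathcal{F}_{t_{0}}$ and letting $T\to\infty$ along a subsequence on which $\mathbb{E}[\langle Y(T),X(T)\rangle\mid\mathcal{F}_{t_{0}}]\to 0$ (which exists because both $Y$ and $X$ lie in $L_{\mathbb{F}}^{2}$) yields the representation
\begin{equation*}
\langle Y(t_{0}),x\rangle=\mathbb{E}\Big[\int_{t_{0}}^{\infty}\langle \varphi(s),X(s;t_{0},x,\alpha_{t_{0}})\rangle\,ds\,\Big|\,\mathcal{F}_{t_{0}}\Big].
\end{equation*}
A conditional Cauchy-Schwarz inequality with weights $e^{\pm\lambda(s-t_{0})/2}$, the exponential decay of $\mathbb{E}|X(\cdot;t_{0},x,\alpha_{t_{0}})|^{2}$, and the arbitrariness of $x\in\mathbb{R}^{n}$ furnish the pointwise bound $|Y(t_{0})|^{2}\leq K\,\mathbb{E}\big[\int_{t_{0}}^{\infty}e^{-\lambda(s-t_{0})/2}|\varphi(s)|^{2}\,ds\,\big|\,\mathcal{F}_{t_{0}}\big]$. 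Taking expectation, integrating in $t_{0}\in[0,\infty)$, and exchanging order by Fubini gives $\mathbb{E}\int_{0}^{\infty}|Y(t)|^{2}dt\leq K\mathbb{E}\int_{0}^{\infty}|\varphi(t)|^{2}dt$.

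For the $Z$ and $\mathbf{\Gamma}$ estimates I apply It\^o's rule to $|Y(t)|^{2}$ on $[0,T]$. The jump piece $[|Y_{-}+\Gamma_{j}|^{2}-|Y_{-}|^{2}]\,dN_{j}$ splits into the compensator contribution $\sum_{j}\pi_{\alpha_{s-}j}|\Gamma_{j}(s)|^{2}ds$ plus a true $\widetilde{N}_{j}$-martingale (the linear $2Y_{-}^{\top}\Gamma_{j}$ piece is predictable and integrates against $d\widetilde{N}_{j}$ to a mean-zero martingale thanks to the $L_{\mathcal{P}}^{2}$-integrability of $\mathbf{\Gamma}$). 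Using Young's inequality $2|Y^{\top}C^{\top}Z|\leq\tfrac{1}{2}|Z|^{2}+K|Y|^{2}$ to absorb the $Z$ cross-term and rearranging, I obtain
\begin{equation*}
\tfrac{1}{2}\mathbb{E}\int_{0}^{T}|Z|^{2}ds+\mathbb{E}\int_{0}^{T}\sum_{j=1}^{L}\pi_{\alpha_{s}j}|\Gamma_{j}|^{2}ds\leq \mathbb{E}|Y(0)|^{2}+\mathbb{E}|Y(T)|^{2}+K\mathbb{E}\int_{0}^{T}|Y|^{2}ds+\mathbb{E}\int_{0}^{T}|\varphi|^{2}ds.
\end{equation*}
Since $Y\in L_{\mathbb{F}}^{2}$, I select $T_{n}\to\infty$ along which $\mathbb{E}|Y(T_{n})|^{2}\to 0$, and inserting the $|Y|^{2}$-bound from the preceding paragraph (applied also at $t_{0}=0$ to dominate $\mathbb{E}|Y(0)|^{2}$) yields \eqref{BSDE-E}. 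The main obstacle compared with the diffusion version in \cite{Sun.JR_2016_IZSLQI} is the careful bookkeeping of the regime-switching jumps: in the duality step one must verify that $\int\mathbf{\Gamma}\cdot d\widetilde{\mathbf{N}}$ paired with the jump-free $X$ produces no drift and only a true martingale, and in the energy step one must identify the correct compensator $\sum_{j}\pi_{\alpha_{s-}j}|\Gamma_{j}|^{2}\,ds$ for the quadratic jump contribution; both cancellations are automatic in the pure-diffusion setting but require Proposition \ref{prop-BSDE}'s integrability here.
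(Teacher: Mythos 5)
Your proof is correct, but it reaches \eqref{BSDE-E} by a genuinely different route, and even in reverse order. The paper runs \emph{both} halves of the estimate through It\^o's formula applied to the Lyapunov quadratic form $\langle P(\alpha_t)^{-1}Y(t),Y(t)\rangle$, with $\mathbf{P}$ taken from the strict inequality \eqref{L-2}: a first pass with the $\varepsilon$-perturbed $\Lambda^{\varepsilon}$ and a completion of squares produces $\sup_t\mathbb{E}|Y(t)|^2$ together with the $Z$- and $\mathbf{\Gamma}$-bounds in terms of $\|\varphi\|^2$, and a second pass then bounds $\mathbb{E}\int_0^\infty|Y|^2$ by $\|\varphi\|^2+\|Z\|^2+\sum_j\|\Gamma_j\|^2$, after which the two are combined. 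You instead obtain the $Y$-estimate first, via the duality pairing with the jump-free homogeneous flow, i.e.\ the Feynman--Kac-type representation $\langle Y(t_0),x\rangle=\mathbb{E}\big[\int_{t_0}^{\infty}\langle\varphi(s),X(s)\rangle ds\mid\mathcal{F}_{t_0}\big]$ plus the uniform exponential decay $\mathbb{E}|X(s;t_0,x,i)|^2\le Ke^{-\lambda(s-t_0)}|x|^2$ (which does follow from Proposition \ref{prop-L2}(i), since $\mathbb{E}|X(t;x,i)|^2$ is a quadratic form in $x$ and $\mathcal{S}$ is finite). Once $\mathbb{E}\int_0^\infty|Y|^2\le K\|\varphi\|^2$ is in hand, your energy identity may use the plain norm $|Y|^2$ instead of $\langle P^{-1}Y,Y\rangle$; this eliminates exactly the $P(j)^{-1}-P(\alpha_{s-})^{-1}$ jump cross-terms that force the paper into its $\varepsilon$-trick, and the remaining $\langle Y,C^{\top}Z\rangle$ term is absorbed by Young's inequality against the already-controlled $\int|Y|^2$. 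What the paper's route buys is that it is purely algebraic in $\mathbf{P}$ and never needs the representation formula, the conditional Cauchy--Schwarz, or the exponential-decay constant; what yours buys is a shorter computation and, as by-products, an explicit formula for $Y$ and a pointwise conditional bound on $|Y(t_0)|^2$. Two points deserve one extra line each if you write this up: (a) the passage $T\to\infty$ in the conditional identity should be taken along $T_n$ with $\mathbb{E}|Y(T_n)|^2\to0$ (such a sequence exists since $Y\in L^2_{\mathbb{F}}$), upgrading $L^1$-convergence of the conditional expectations to a.s.\ convergence along a further subsequence and running the identity over a countable dense set of $x$; and (b) the true-martingale property of the cross stochastic integrals such as $\int\langle X,Z\rangle dW$ requires a localization argument --- though the paper's own proof is equally silent on this.
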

\begin{proof}
 We divide the proof into two steps. 
 For step $1$, we prove that
 \begin{equation}\label{step-1}
  \mathbb{E}\Big\{|Y(t)|^{2}+\int_{t}^{\infty}\big[|Z(s)|^{2}+\sum_{j=1}^{L}\pi_{\alpha_{s}j}|\Gamma_{j}(s)|^2\big]ds\Big\}\leq K\mathbb{E}\int_{t}^{\infty}|\varphi(s)|^{2}ds,\quad \forall t\geq 0,
 \end{equation}
 and for step $2$, we verify that
  \begin{equation}\label{step-2}
  \mathbb{E}\int_{0}^{\infty}|Y(t)|^{2}dt\leq K\mathbb{E}\int_{0}^{\infty}\big[|\varphi(t)|^{2}+|Z(t)|^{2}+\sum_{j=1}^{L}\pi_{\alpha_{t}j}|\Gamma_{j}(t)|^2\big]dt.
 \end{equation}
 Then the result \eqref{BSDE-E} can be directly derived from \eqref{step-1} and \eqref{step-2}. Now, let us elaborate on the aforementioned two steps.  

 Since $[A,C]_{\alpha}$ is $L^{2}$-stable, by Proposition \ref{prop-L2}, there exists a $\mathbf{P}\in\mathcal{D} \left(\mathbb{S}_{+}^{n}\right)$ satisfies \eqref{L-2}. Hence, we can choose a sufficiently small $\varepsilon>0$ such that
 $$\Lambda^{\varepsilon}(i)\triangleq -\Big[P(i)A(i)+A(i)^{\top}P(i)+(1+\varepsilon)\big(C(i)^{\top}P(i)C(i)+\sum_{j=1}^{L}\pi_{ij}P(j)\big)\Big]> 0,\quad i\in\mathcal{S}.$$
 Clearly,
  $$\begin{aligned}
  dP\left(\alpha_{t}\right)^{-1}Y(t) &=\big\{-P\left(\alpha_{t}\right)^{-1}\big[A(\alpha_{t})^{\top}Y(t)+C(\alpha_{t})^{\top}Z(t)+\varphi(t)\big]
  +\sum_{j=1}^{L}\pi_{\alpha_{t}j}P(j)^{-1}Y(t)\\
  &\quad+\sum_{j=1}^{L}\pi_{\alpha_{t}j}\left[P(j)^{-1}-P(\alpha_{t})^{-1}\right]\Gamma_{j}(t)\big\}dt
  +P(\alpha_{t})^{-1}Z(t)dW(t)\\
  &\quad+\sum_{j=1}^{L}\left[P(j)^{-1}\Gamma_{j}(t)+\left(P(j)^{-1}-P(\alpha_{t-})^{-1}\right)Y(t-)\right]d\widetilde{N}(t).
  \end{aligned}$$
Then applying It\^o's rule to $\big<P\left(\alpha_{t}\right)^{-1}Y(t),Y(t)\big>$ over $[t,T]$ and taking expectation, we obtain
 \begin{align*}
 &\quad\mathbb{E}\left[\left<P\left(\alpha_{T}\right)^{-1}Y(T),Y(T)\right>-\left<P\left(\alpha_{t}\right)^{-1}Y(t),Y(t)\right>\right]\\
   &=\mathbb{E}\int_{t}^{T}\Big\{-2\big<P\left(\alpha_{s}\right)^{-1}\big[A(\alpha_{s})^{\top}Y(s)+C(\alpha_{s})^{\top}Z(s)
   +\varphi(s)\big],Y(s)\big>+\big<P\left(\alpha_{s}\right)^{-1}Z(s),Z(s)\big>\\
   &\qquad+\sum_{j=1}^{L}\pi_{\alpha_{s}j}\big<P(j)^{-1}Y(s),Y(s)\big>+\sum_{j=1}^{L}\pi_{\alpha_{s}j}\big<\left[P(j)^{-1}-P(\alpha_{s})^{-1}\right]\Gamma_{j}(s),Y(s)\big>
   \\
  &\qquad+\sum_{j=1}^{L}\pi_{\alpha_{s}j}\big<P(j)^{-1}\Gamma_{j}(s)+\left(P(j)^{-1}-P(\alpha_{s})^{-1}\right)Y(s),\Gamma_{j}(s)\big>\Big\}ds\\
  &=\mathbb{E}\int_{t}^{T}\Big\{-\big<\big[A(\alpha_{s})^{\top}P\left(\alpha_{s}\right)+
  P\left(\alpha_{s}\right)A(\alpha_{s})\big]P\left(\alpha_{s}\right)^{-1}Y(s),P\left(\alpha_{s}\right)^{-1}Y(s)\big>
  \\
 &\qquad+\big<P\left(\alpha_{s}\right)^{-1}Z(s),Z(s)\big>-2\big<Z(s),C(\alpha_{s})P\left(\alpha_{s}\right)^{-1}Y(s)\big>
  -2\big<\varphi(s),P\left(\alpha_{s}\right)^{-1}Y(s)\big>\\
  &\qquad+\sum_{j=1}^{L}\pi_{\alpha_{s}j}\big<P(j)^{-1}\Gamma_{j}(s),\Gamma_{j}(s)\big>
 -2\sum_{j=1}^{L}\pi_{\alpha_{s}j}\big<\Gamma_{j}(s),P(\alpha_{s})^{-1}Y(s)\big>\\
  &\qquad+\sum_{j=1}^{L}\pi_{\alpha_{s}j}\big<P(j)^{-1}Y(s),Y(s)\big>
 +2\sum_{j=1}^{L}\pi_{\alpha_{s}j}\big<P(j)^{-1}\Gamma_{j}(s),Y(s)\big>\Big\}ds\\
 &=\mathbb{E}\int_{t}^{T}\Big\{\big<\Lambda^{\varepsilon}\left(\alpha_{s}\right)P\left(\alpha_{s}\right)^{-1}Y(s),P\left(\alpha_{s}\right)^{-1}Y(s)\big>
 -2\big<\varphi(s),P\left(\alpha_{s}\right)^{-1}Y(s)\big>\\
 &\qquad+(1+\varepsilon)\big<P\left(\alpha_{s}\right)C(\alpha_{s})P\left(\alpha_{s}\right)^{-1}Y(s),
 C(\alpha_{s})P\left(\alpha_{s}\right)^{-1}Y(s)\big>-2\big<Z(s),C(\alpha_{s})P\left(\alpha_{s}\right)^{-1}Y(s)\big>\\
 &\qquad+\sum_{j=1}^{L}\pi_{\alpha_{s}j}\left[(1+\varepsilon)\big<P(j)P\left(\alpha_{s}\right)^{-1}Y(s),
 P\left(\alpha_{s}\right)^{-1}Y(s)\big>-2\big<\Gamma_{j}(s),P(\alpha_{s})^{-1}Y(s)\big>\right]\\
 &\qquad+\big<P\left(\alpha_{s}\right)^{-1}Z(s),Z(s)\big>+\sum_{j=1}^{L}\pi_{\alpha_{s}j}\big<P(j)^{-1}\Gamma_{j}(s),\Gamma_{j}(s)\big>\\
 &\qquad+\sum_{j=1}^{L}\pi_{\alpha_{s}j}\big<P(j)^{-1}Y(s),Y(s)\big>
 +2\sum_{j=1}^{L}\pi_{\alpha_{s}j}\big<P(j)^{-1}\Gamma_{j}(s),Y(s)\big>\Big\}ds\\
  &=\mathbb{E}\int_{t}^{T}\Big\{\big<\Lambda^{\varepsilon}\left(\alpha_{s}\right)P\left(\alpha_{s}\right)^{-1}Y(s),P\left(\alpha_{s}\right)^{-1}Y(s)\big>
 -2\big<\varphi(s),P\left(\alpha_{s}\right)^{-1}Y(s)\big>\\
 &\qquad+(1+\varepsilon)\left|P\left(\alpha_{s}\right)^{\frac{1}{2}}\left[C(\alpha_{s})P\left(\alpha_{s}\right)^{-1}Y(s)
 -\frac{1}{1+\varepsilon}P\left(\alpha_{s}\right)^{-1}Z(s)\right]\right|^{2}\\
 &\qquad+(1+\varepsilon)\sum_{j=1}^{L}\pi_{\alpha_{s}j}\left|P(j)^{\frac{1}{2}}\left[P\left(\alpha_{s}\right)^{-1}Y(s)
 -\frac{1}{1+\varepsilon}P(j)^{-1}\Gamma_{j}(s)\right]\right|^{2}\\
 &\qquad+\frac{\varepsilon}{1+\varepsilon}\big<P\left(\alpha_{s}\right)^{-1}Z(s),Z(s)\big>
 +\frac{\varepsilon}{1+\varepsilon}\sum_{j=1}^{L}\pi_{\alpha_{s}j}\big<P(j)^{-1}\Gamma_{j}(s),\Gamma_{j}(s)\big>\\
 &\qquad+\sum_{j=1}^{L}\pi_{\alpha_{s}j}\big<P(j)^{-1}Y(s),Y(s)\big>
 +2\sum_{j=1}^{L}\pi_{\alpha_{s}j}\big<P(j)^{-1}\Gamma_{j}(s),Y(s)\big>\Big\}ds\\
 &\geq \mathbb{E}\int_{t}^{T}\Big\{\lambda\left|P\left(\alpha_{s}\right)^{-1}Y(s)\right|^{2}
 -\lambda\left|P\left(\alpha_{s}\right)^{-1}Y(s)\right|^{2}-\frac{1}{\lambda}\left|\varphi(s)\right|^{2}
 +\frac{\varepsilon\mu}{1+\varepsilon}\left|Z(s)\right|^{2}\\
 &\qquad+\frac{\varepsilon\mu}{1+\varepsilon}\sum_{j=1}^{L}\pi_{\alpha_{s}j}\left|\Gamma_{j}(s)\right|^{2}
 -\sum_{j=1}^{L}\pi_{\alpha_{s}j}\left[\theta\beta\left|\Gamma_{j}(s)\right|^{2}+\frac{1}{\theta}\left|Y(s)\right|^{2}\right]\Big\}ds\\
 &=\mathbb{E}\int_{t}^{T}\Big\{-\frac{1}{\lambda}\left|\varphi(s)\right|^{2}
 +\frac{\varepsilon\mu}{1+\varepsilon}\left|Z(s)\right|^{2}
+\big[\frac{\varepsilon\mu}{1+\varepsilon}-\theta\beta\big]\sum_{j=1}^{L}\pi_{\alpha_{s}j}\left|\Gamma_{j}(s)\right|^{2}
 \Big\}ds.
 \end{align*}
Here $\lambda>0$ is the smallest  eigenvalue of $\mathbf{\Lambda^{\varepsilon}}=\left[\Lambda^{\varepsilon}(1),\Lambda^{\varepsilon}(2),\cdots,\Lambda^{\varepsilon}(L)\right]$ and $\mu>0 \, (\sqrt{\beta}>0)$ be the smallest (largest) eigenvalue of $\mathbf{P^{-1}}=\left[P(1)^{-1},P(2)^{-1},\cdots,P(L)^{-1}\right]$.

Let $\theta=\dfrac{\varepsilon\mu}{2\beta(1+\varepsilon)}>0$. Then the above equation implies
\begin{align*}
 &\quad\min\{\mu, \frac{\varepsilon\mu}{2(1+\varepsilon)}\}\mathbb{E}\Big\{\left[\left|Y(t)\right|^{2}\right]
+\int_{t}^{T}\Big[\left|Z(s)\right|^{2}+\sum_{j=1}^{L}\pi_{\alpha_{s}j}\left|\Gamma_{j}(s)\right|^{2}
 \Big]ds\Big\}\\
 &\leq \mathbb{E}\Big\{\left[\left<P\left(\alpha_{t}\right)^{-1}Y(t),Y(t)\right>\right]
+\int_{t}^{T}\Big[\frac{\varepsilon\mu}{1+\varepsilon}\left|Z(s)\right|^{2}
+\frac{\varepsilon\mu}{2(1+\varepsilon)}\sum_{j=1}^{L}\pi_{\alpha_{s}j}\left|\Gamma_{j}(s)\right|^{2}
 \Big]ds\Big\}\\
 &\leq \mathbb{E}\Big\{\left<P\left(\alpha_{T}\right)^{-1}Y(T),Y(T)\right>
 +\int_{t}^{T}\frac{1}{\lambda}\left|\varphi(s)\right|^{2}ds\Big\}.
 \end{align*}
 Since $Y(\cdot)\in L_{\mathbb{F}}^{2}(\mathbb{R}^{n})$, we have $\displaystyle\lim_{T\rightarrow\infty}\mathbb{E}\left|Y(T)\right|^{2}=0$. Thus, dividing the above inequality on both side by $\min\{\mu, \frac{\varepsilon\mu}{2(1+\varepsilon)}\}>0$ and letting $T\rightarrow \infty$, we obtain \eqref{step-1} with $K=\dfrac{1}{\lambda \cdot \min\{\mu, \frac{\varepsilon\mu}{2(1+\varepsilon)}\}}$.
 
 For step $2$, we let $\sigma>0$ be the smallest eigenvalue of the process $-[A(\alpha)^{\top}P(\alpha)+P(\alpha)A(\alpha)]$.
  Then, applying It\^o's rule to $\big<P\left(\alpha_{t}\right)^{-1}Y(t),Y(t)\big>$ over $[0,T]$ and taking expectation lead to 
 \begin{align*}
&\quad\mathbb{E}\left[\left<P\left(\alpha_{T}\right)^{-1}Y(T),Y(T)\right>-\left<P\left(i\right)^{-1}Y(0),Y(0)\right>\right]\\
 &=\mathbb{E}\int_{0}^{T}\Big\{-\big<\big[A(\alpha_{s})^{\top}P\left(\alpha_{s}\right)+
 P\left(\alpha_{s}\right)A(\alpha_{s})\big]P\left(\alpha_{s}\right)^{-1}Y(s),P\left(\alpha_{s}\right)^{-1}Y(s)\big>
 \\
 &\qquad+\big<P\left(\alpha_{s}\right)^{-1}Z(s),Z(s)\big>-2\big<C(\alpha_{s})^{\top}Z(s),P\left(\alpha_{s}\right)^{-1}Y(s)\big>
 -2\big<\varphi(s),P\left(\alpha_{s}\right)^{-1}Y(s)\big>\\
 &\qquad+\sum_{j=1}^{L}\pi_{\alpha_{s}j}\big<P(j)^{-1}\left[Y(s)+\Gamma_{j}(s)\right],Y(s)+\Gamma_{j}(s)\big>
 -2\sum_{j=1}^{L}\pi_{\alpha_{s}j}\big<\Gamma_{j}(s),P(\alpha_{s})^{-1}Y(s)\big>\Big\}ds\\
 &\geq \mathbb{E}\int_{0}^{T}\Big\{\sigma\left|P\left(\alpha_{s}\right)^{-1}Y(s)\right|^{2}
 -\frac{\sigma}{2}\left|P\left(\alpha_{s}\right)^{-1}Y(s)\right|^{2}
 -\frac{4}{\sigma}\left|C(\alpha_{s})^{\top}Z(s)\right|^{2}
-\frac{4}{\sigma}\left|\varphi(s)\right|^{2}\\
 &\qquad-\sum_{j=1}^{L}\pi_{\alpha_{s}j}\left[\left|\Gamma_{j}(s)\right|^{2}
 +\left|P(\alpha_{s})^{-1}Y(s)\right|^{2}\right]\Big\}ds\\
 &=\mathbb{E}\int_{0}^{T}\Big\{\frac{\sigma}{2}\left|P\left(\alpha_{s}\right)^{-1}Y(s)\right|^{2}
 -\frac{4}{\sigma}\left|C(\alpha_{s})^{\top}Z(s)\right|^{2}
-\frac{4}{\sigma}\left|\varphi(s)\right|^{2}-\sum_{j=1}^{L}\pi_{\alpha_{s}j}\left|\Gamma_{j}(s)\right|^{2}\Big\}ds.
 \end{align*}
Consequently,
\begin{align*}
\mathbb{E}\int_{0}^{T}\frac{\sigma}{2}\left|P\left(\alpha_{s}\right)^{-1}Y(s)\right|^{2}ds
\leq\mathbb{E}\Big\{&\int_{0}^{T}\Big[
 \frac{4}{\sigma}\left|C(\alpha_{s})^{\top}Z(s)\right|^{2}
+\frac{4}{\sigma}\left|\varphi(s)\right|^{2}+\sum_{j=1}^{L}\pi_{\alpha_{s}j}\left|\Gamma_{j}(s)\right|^{2}\Big]ds\\
&+\left<P\left(\alpha_{T}\right)^{-1}Y(T),Y(T)\right>\Big\}.
\end{align*}
Let $\mu>0$ be the smallest eigenvalue of $\mathbf{P^{-1}}=\left[P(1)^{-1},P(2)^{-1},\cdots,P(L)^{-1}\right]$ again and $\gamma\geq 0$ be the largest eigenvalue of process $C(\alpha)^{\top}C(\alpha)$. Then dividing both side by $\frac{\sigma\mu}{2}$ and letting $T\rightarrow\infty$, we derive \eqref{step-2} with $K=\max\{\frac{8\gamma}{\sigma^{2}\mu},\frac{8}{\sigma^{2}\mu},\frac{1}{\sigma^{2}\mu}\}$. This completes the proof.
\end{proof}

\begin{proof}[\textbf{Proof of Proposition \ref{prop-BSDE}}]
The uniqueness can be immediately derived from priori estimate \eqref{BSDE-E}. For the existence,  let
  \begin{align*}
    \varphi^{(k)}(t)=\varphi(t)\mathbb{I}_{[0,k]}(t),\, t\in[0,\infty),\quad  \widehat{\varphi}^{(k)}(t)=\varphi(t), \,  t\in[0,k].
  \end{align*}
  Then $\left\{\varphi^{(k)}(\cdot)\right\}_{k=1}^{\infty}$ converges to $\varphi(\cdot)$ in $L_{\mathbb{F}}^{2}(\mathbb{R}^{n})$. Consider the following finite time horizon BSDE
  \begin{equation}\label{BSDE-F}
    \left\{
    \begin{aligned}
    &d\widehat{Y}^{(k)}(t)=-\left[A(\alpha_{t})^{\top}\widehat{Y}^{(k)}(t)+C(\alpha_{t})^{\top}\widehat{Z}^{(k)}(t)+\widehat{\varphi}^{(k)}(t)\right]dt+\widehat{Z}^{(k)}(t)dW(t)
    +\mathbf{\widehat{\Gamma}^{(k)}}(t)\cdot d\mathbf{\widetilde{N}}(t), \\
    &\widehat{Y}^{(k)}(k)=0,\quad t\in[0,k].
    \end{aligned}
    \right.
  \end{equation}
  Clearly, for any $k>0$, the BSDE \eqref{BSDE-F} admits a square integrable solution $\left(\widehat{Y}^{(k)}(\cdot),\widehat{Z}^{(k)}(\cdot),\mathbf{\widehat{\Gamma}^{(k)}}(\cdot)\right)$.
  Let
  \begin{equation*}\left\{
  \begin{aligned}
  &Y^{(k)}(t)=\widehat{Y}^{(k)}(t)\mathbb{I}_{[0,k]}(t)+0\mathbb{I}_{[k,\infty)}(t),\qquad t\in [0,\infty),\\
  &Z^{(k)}(t)=\widehat{Z}^{(k)}(t)\mathbb{I}_{[0,k]}(t)+0\mathbb{I}_{[k,\infty)}(t),\qquad t\in [0,\infty),\\
  &\Gamma_{j}^{(k)}(t)=\widehat{\Gamma}_{j}^{(k)}(t)\mathbb{I}_{[0,k]}(t)+0\mathbb{I}_{[k,\infty)}(t), \qquad t\in [0,\infty),\quad j\in\mathcal{S}.
  \end{aligned}
  \right.
  \end{equation*}
  Then $\left(Y^{(k)}(\cdot),Z^{(k)}(\cdot),\mathbf{\Gamma^{(k)}}(\cdot)\right)\in L_{\mathbb{F}}^{2}(\mathbb{R}^{n})\times L_{\mathbb{F}}^{2}(\mathbb{R}^{n})\times \mathcal{D}\left(L_{\mathcal{P}}^{2}(\mathbb{R}^{n})\right)$ solves the following BSDE
  $$
  dY^{(k)}(t)=-\left[A(\alpha_{t})^{\top}Y^{(k)}(t)+C(\alpha_{t})^{\top}Z^{(k)}(t)+\varphi^{(k)}(t)\right]dt+Z^{(k)}(t)dW(t)+\mathbf{\Gamma^{(k)}}(t)\cdot d\mathbf{\widetilde{N}}(t),\quad t\geq 0.
  $$
In addition, by Lemma \ref{lem-BSDE-E}, we have
    \begin{align*}
    &\mathbb{E}\int_{0}^{\infty}\Big[\left|Y^{(k)}(t)-Y^{(l)}(t)\right|^{2}+\left|Z^{(k)}(t)-Z^{(l)}(t)\right|^{2}
    +\sum_{j=1}^{L}\pi_{\alpha_{t}j}\left|\Gamma_{j}^{(k)}(t)-\Gamma_{j}^{(l)}(t)\right|^{2}\Big]dt\\
    \leq &K\mathbb{E}\int_{0}^{\infty}\left|\varphi^{(k)}(t)-\varphi^{(l)}(t)\right|^{2}dt,\quad k,l>0.
    \end{align*}
 Hence, there exists a triple $\left(Y(\cdot),Z(\cdot),\mathbf{\Gamma}(\cdot)\right)\in L_{\mathbb{F}}^{2}(\mathbb{R}^{n})\times L_{\mathbb{F}}^{2}(\mathbb{R}^{n})\times\mathcal{D}\left(L_{\mathcal{P}}^{2}(\mathbb{R}^{n})\right)$ such that
    \begin{align*}
    \quad\mathbb{E}\int_{0}^{\infty}\Big[\left|Y^{(k)}(t)-Y(t)\right|^{2}+\left|Z^{(k)}(t)-Z(t)\right|^{2}
    +\sum_{j=1}^{L}\pi_{\alpha_{t}j}\left|\Gamma_{j}^{(k)}(t)-\Gamma(t)\right|^{2}\Big]dt\rightarrow 0,
    \quad as\quad k\rightarrow\infty.
    \end{align*}
    One can easily verify that $\left(Y(\cdot),Z(\cdot),\mathbf{\Gamma}(\cdot)\right)$ is the unique solution of BSDE \eqref{BSDE}. 
  \end{proof}

\section{Admissible control}\label{section-control}
In this section, we study the structure of admissible control set. From \eqref{admissible-controls}, we can easily obtain that  the set $\mathcal{U}_{ad}(x,i)$ is either empty or a convex subset of $L_{\mathbb{F}}^{2}(\mathbb{R}^{m})$. To make problem (M-SLQ) meaningful for any initial value $(x,i)\in\mathbb{R}^{n}\times \mathcal{S}$, we need to find a condition 
such that 
$\mathcal{U}_{ad}(x,i)$ is at least non-empty and 
admits an accessible characterization.

To simplify our further analysis, for any given $\mathbf{\Theta}\in\mathcal{D}\left(\mathbb{R}^{m\times n}\right)$, we introduce an auxiliary problem, whose state equation satisfies

\begin{equation}\label{state-Theta}
  \left\{
 \begin{aligned}
   dX_{\Theta}(t)&=\left[A_{\Theta}(\alpha_{t})X(t)+B(\alpha_{t})\nu(t)+b(t)\right]dt+\left[C_{\Theta}(\alpha_{t})X(t)+D(\alpha_{t})\nu(t)+\sigma(t)\right]dW(t),\\
   X_{\Theta}(0)&=x,\quad \alpha_0=i,\quad  t\geq0,
   \end{aligned}
  \right.
\end{equation}
 and the corresponding cost functional is given by
 \begin{equation}\label{cost-Theta}
\begin{aligned}
    J_{\Theta}\left(x,i;\nu(\cdot)\right)
    &  \triangleq  \mathbb{E}\int_{0}^{\infty} \left[
    \left<
    \left(
    \begin{matrix}
    Q_{\Theta}(\alpha_{t}) & S_{\Theta}(\alpha_{t})^{\top}\\
    S_{\Theta}(\alpha_{t}) & R(\alpha_{t})
    \end{matrix}
    \right)
    \left(
    \begin{matrix}
    X_{\Theta}(t)\\
    \nu(t)
    \end{matrix}
    \right),
     \left(
    \begin{matrix}
    X_{\Theta}(t)\\
    \nu(t)
    \end{matrix}
    \right)
    \right>\right.\\
    &\quad \left.+ 2\left<
    \left(
    \begin{matrix}
    q_{\Theta}(t)\\
    \rho(t)
    \end{matrix}
    \right),
     \left(
    \begin{matrix}
    X_{\Theta}(t)\\
    \nu(t)
    \end{matrix}
    \right)
    \right>\right]dt.
  \end{aligned}
\end{equation}
where for any $i\in\mathcal{S}$ and $t\geq 0$,
\begin{equation}\label{notations-theta-problem}
    \left\{
    \begin{aligned}
     &A_{\Theta}(i)=A(i)+B(i)\Theta(i),\quad C_{\Theta}(i)=C(i)+D(i)\Theta(i),\\
     &Q_{\Theta}(i)=Q(i)+S(i)^{\top}\Theta(i)+\Theta(i)^{\top}S(i)+\Theta(i)^{\top}R(i)\Theta(i), \\
     &S_{\Theta}(i)=S(i)+R(i)\Theta(i),\quad q_{\Theta}(t)=q(t)+\Theta(\alpha_{t})^{\top}\rho(t).
    \end{aligned}
    \right.
\end{equation}
Here, we denote the solution to \eqref{state-Theta} as $X_{\Theta}(\cdot)\equiv X_{\Theta}(\cdot;x,i,\nu)$. Obviously, by some straightforward calculations, one has
$J_{\Theta}(x,i;\nu(\cdot))=J(x,i;\Theta(\alpha(\cdot))X_{\Theta}(\cdot)+\nu(\cdot)).$ Similar to the admissible sets of Problem (M-SLQ), we define the following sets:
\begin{equation}\label{closed-loop-strategy}
\begin{aligned}
&\mathcal{V}_{cl}[0,\infty)\triangleq \left\{\left(\mathbf{\Theta},\nu(\cdot)\right)\in\mathcal{D}\left(\mathbb{R}^{m\times n}\right)\times  L_{\mathbb{F}}^{2}(\mathbb{R}^{m})\big| X_{\Theta}(\cdot;x,i,\nu)\in L_{\mathbb{F}}^{2}(\mathbb{R}^n),\quad \forall (x,i)\in\mathbb{R}^n\times\mathcal{S}\right\},\\
&\mathcal{U}_{\Theta}(x,i)\triangleq \left\{\nu(\cdot)\in L_{\mathcal{F}}^{2}(\mathbb{R}^{m})\big| X_{\Theta}(\cdot;x,i,\nu)\in L_{\mathbb{F}}^{2}(\mathbb{R}^n) \right\},\quad
\mathbf{\Theta}\in \mathcal{D}\left(\mathbb{R}^{m\times n}\right).
\end{aligned}
\end{equation}

\noindent\textbf{Problem (M-SLQ-$\mathbf{\Theta}$).} For any given $(x,i)\in \mathbb{R}^{n}\times \mathcal{S}$, find $\nu^{*}(\cdot)\in \mathcal{U}_{\Theta}(x,i)$ such that
\begin{equation}\label{value-Theta}
     J_{\Theta}\left(x,i;\nu^{*}(\cdot)\right)=\inf_{\nu(\cdot)\in \mathcal{U}_{\Theta}(x,i)}J_{\Theta}\left(x,i;\nu(\cdot)\right)\triangleq V_{\Theta}(x,i).
\end{equation}
Here, the function $V_{\Theta}(\cdot,\cdot)$ is called the value function of Problem (M-SLQ-$\Theta$). In addition, if $b(\cdot)=\sigma(\cdot)=q(\cdot)=0$, $\rho(\cdot)=0$, then the corresponding state process, cost functional, value function and problem are denoted by $X_{\Theta}^{0}(\cdot;x,i,\nu)$, $J_{\Theta}^{0}(x,i;\nu(\cdot))$, $V_{\Theta}^{0}(x,i)$ and Problem (M-SLQ-$\Theta$)$^{0}$, respectively.



\begin{definition}\label{def-open-closed}
\begin{enumerate}
\item[(i)] Problem (M-SLQ) is said to be convex if and only if
\begin{equation}
    J^{0}(0,i;u(\cdot))\geq 0,\quad \forall u(\cdot)\in\mathcal{U}_{ad}^{0}(0,i),\quad \forall i\in\mathcal{S}.
\end{equation}
Further, Problem (M-SLQ) is said to be uniformly convex if and only if there exist a $\delta>0$ such that:
\begin{equation}
    J^{0}(0,i;u(\cdot))\geq \delta \mathbb{E}\int_{0}^{\infty}|u(t)|^{2}dt, \quad \forall u(\cdot)\in\mathcal{U}_{ad}^{0}(0,i),\quad \forall i\in\mathcal{S}.
\end{equation}
  \item[(ii)] An element $u^{*}(\cdot)\in\mathcal{U}_{ad}(x,i)$ is called an open-loop optimal control of Problem (M-SLQ) for the initial state $(x,i)\in\mathbb{R}^{n}\times\mathcal{S}$ if
  \begin{equation}\label{open-loop-optimal-control}
    J(x,i;u^{*}(\cdot))\leq J(x,i;u(\cdot)),\quad \forall u(\cdot)\in\mathcal{U}_{ad}(x,i).
  \end{equation}
  Problem (M-SLQ) is said to be (uniquely) open-loop solvable at $(x,i)\in\mathbb{R}^{n}\times\mathcal{S}$ if it admits a (unique)  open-loop optimal control $u^{*}(\cdot)\in\mathcal{U}_{ad}(x,i)$. If Problem (M-SLQ) is  (uniquely) open-loop solvable at all $(x,i)\in\mathbb{R}^{n}\times\mathcal{S}$, it is said to be (uniquely) open-loop solvable.
  \item[(iii)] A pair $(\mathbf{\Theta},\nu(\cdot))$  is called a closed-loop strategy of Problem (M-SLQ) if $(\mathbf{\Theta},\nu(\cdot))\in\mathcal{V}_{cl}[0,\infty)$. In this case,  $u(\cdot)\equiv u(\cdot;x,i,\mathbf{\Theta},\nu)\triangleq \Theta(\alpha(\cdot))X_{\Theta}(\cdot;x,i,\nu)+\nu(\cdot),$
  is called an outcome of closed-loop strategy $(\mathbf{\Theta},\nu(\cdot))$ for the initial state $(x,i)\in\mathbb{R}^{n}\times\mathcal{S}$. In addition, $X_{\Theta}(\cdot)\equiv X_{\Theta}(\cdot;x,i,\nu)$ is called a closed-loop state process corresponding to $(x,i,\mathbf{\Theta},\nu(\cdot))$.
  \item[(iv)] A pair $(\widehat{\mathbf{\Theta}},\widehat{\nu}(\cdot))\in \mathcal{V}_{cl}[0,\infty)$ is called a closed-loop optimal strategy if for any $\left(x,i\right)\in \mathbb{R}^{n}\times\mathcal{S}$,
  \begin{equation}\label{closed-loop-optimal-control}
   J(x,i;u(\cdot;x,i,\mathbf{\widehat{\Theta}},\widehat{\nu}))\leq J(x,i;u(\cdot)),
   \quad \forall u(\cdot)\in \mathcal{U}_{ad}(x,i).
  \end{equation}
  Problem (M-SLQ) is said to be (uniquely) closed-loop solvable if it admits a (unique)  closed-loop optimal strategy $(\widehat{\mathbf{\Theta}},\widehat{\nu}(\cdot))\in \mathcal{V}_{cl}[0,\infty)$.
\end{enumerate}
\end{definition}

\begin{remark}\label{rmk-def} \rm
About the Definition \ref{def-open-closed}, we need to pay attention to the following points:
\begin{enumerate}
  \item[(i)] Let $\delta>0$ and 
  consider the following cost functional:
  \begin{equation*}\label{cost-delta}
  \begin{aligned}
      J_{\delta}\!\left(x,i;u(\cdot)\right)
       &\!\triangleq\! \mathbb{E}\int_{0}^{\infty}\!\left[\!
      \left<\!
      \left(\!
      \begin{matrix}
      Q(\alpha_{t})\!&S\!(\alpha_{t})^{\top}\\
      S(\alpha_{t})\!&\!R(\alpha_{t})-\delta I
      \end{matrix}
      \!\right)\!
      \left(\!
      \begin{matrix}
      X(t)\\
      u(t)
      \end{matrix}
      \!\right),\!
      \left(\!
      \begin{matrix}
      X(t)\\
      u(t)
      \end{matrix}
      \!\right)\!
      \right>\!+\!2\left<\!
      \left(\!
      \begin{matrix}
      q(t)\\
      \rho(t)
      \end{matrix}
      \!\right),\!
      \left(\!
      \begin{matrix}
      X(t)\\
      u(t)
      \end{matrix}
      \!\right)
      \!\right>\!\right]dt.
    \end{aligned}
  \end{equation*}
  If we denote the problem with the above cost functional \eqref{cost-delta} and state constraint \eqref{state} as Problem (M-SLQ)$_{\delta}$, then Problem (M-SLQ) is uniformly convex for some $\delta>0$ if and only if Problem (M-SLQ)$_{\delta}$ is convex.
  \item[(ii)] Similar to Sun and Yong \cite{Sun-Yong-2018-ISLQI}, we introduced the stabilizer set for our regime-switching system as follows:
  $$ \mathcal{H}[A,C;B,D]_{\alpha}\triangleq
\big\{\mathbf{\Theta}\in \mathcal{D}\left(\mathbb{R}^{m\times n}\right)|\text{ System }\left[A_{\Theta},C_{\Theta}\right]_{\alpha} \text{ is } L^{2}\text{- stable}\big\},$$
where $A_{\Theta}(i)$ and $C_{\Theta}(i)$ are defined in \eqref{notations-theta-problem}.
Clearly, it will degenerate to the the stabilizer set $\mathcal{H}[A,C;B,D]$ for diffusion system without regime switching by setting $\mathcal{S}=\{1\}$. Sun and Yong \cite{Sun-Yong-2018-ISLQI} defined the closed-loop strategy set as $\mathcal{H}[A,C;B,D]\times L_{\mathbb{F}}^{2}(\mathbb{R}^{m})$. However, it is more natural to define the closed-loop strategy set from the perspective of the stability of the state system. In other words, the closed-loop strategy $\left(\mathbf{\Theta},\nu(\cdot)\right)\in\mathcal{D}\left(\mathbb{R}^{m\times n}\right)\times  L_{\mathbb{F}}^{2}(\mathbb{R}^{m})$ for the Problem (M-SLQ) should ensure that the corresponding state process $X_{\Theta}(\cdot;x,i,\nu)$ is in $L_{\mathbb{F}}^{2}(\mathbb{R}^n)$, which is consistent with the definition of $\mathcal{V}_{cl}[0,\infty)$. It follows from Proposition \ref{prop-FSDE} that for any $\mathbf{\Theta}\in\mathcal{H}[A,C;B,D]_{\alpha}$, the admissible control set $\mathcal{U}_{\Theta}(x,i)=L_{\mathbb{F}}^{2}(\mathbb{R}^{m})$ for any $(x,i)\in\mathbb{R}^{n}\times\mathcal{S}$, which further implies $\mathcal{H}[A,C;B,D]_{\alpha}\times L_{\mathbb{F}}^{2}(\mathbb{R}^{m})\subseteq \mathcal{V}_{cl}[0,\infty)$. Therefore, at first sight, our definition of closed-loop strategy set is larger than that of Sun and Yong \cite{Sun-Yong-2018-ISLQI}. However, as we can see in Proposition \ref{prop-closed-loop-admissible-controls-characterization}, the two sets in fact are equal.
  \item[(iii)] Suppose the closed-loop strategy set $\mathcal{V}_{cl}[0,\infty)$ is non-emptiness and $(\mathbf{\Theta},\nu(\cdot))\in \mathcal{V}_{cl}[0,\infty)$ is an arbitrary element of $\mathcal{V}_{cl}[0,\infty)$. Then, for any $(x,i)\in\mathbb{R}^{n}\times\mathcal{S}$, we have $u(\cdot;x,i,\mathbf{\Theta},\nu)\in\mathcal{U}_{ad}(x,i)$. Hence, the non-emptiness of closed-loop strategy set $\mathcal{V}_{cl}[0,\infty)$ implies that $\mathcal{U}_{ad}(x,i)$ is non-empty for all initial value $(x,i)$. However, the converse is not obvious. In the rest of this section, we will prove the non-empty equivalence between the closed-loop strategy set and the admissible control set $\mathcal{U}_{ad}(x,i)$ for all initial value $(x,i)$.
  \item[(iv)] By definition, if $(\widehat{\mathbf{\Theta}},\widehat{\nu}(\cdot))\in\mathcal{V}_{cl}[0,\infty)$ is a closed-loop optimal strategy, then corresponding outcome $u(\cdot;x,i,\widehat{\mathbf{\Theta}},\widehat{\nu})$ is an open-loop optimal control for initial value $(x,i)$. Hence, the closed-loop solvability implies the open-loop solvability of the Problem (M-SLQ). As we can see in Section \ref{section-equivalence}, the open loop solvability and closed-loop solvability are actually equivalent.
  \end{enumerate}
\end{remark}
Next, we provide a lemma, which will be used frequently in the rest of the paper. 


\begin{lemma}\label{lem-useful}
  Let $\mathbf{G}$, $\mathbf{Q}\in\mathcal{D}\left(\mathbb{S}^{n}\right)$, $\mathbf{R}\in\mathcal{D}\left(\mathbb{S}^{m}\right)$ and $\mathbf{S}\in\mathcal{D}\left(\mathbb{R}^{m\times n}\right)$ be given. Suppose that for each $T>0$, the coupled differential Riccati equations (CDREs, for short)
  \begin{equation*}
  \left\{
  \begin{aligned}
  &\Dot{P}_{i}(t;T)+P_i(t;T)A(i)+A(i)^{\top}P_i(t;T)+C(i)^{\top}P_i(t;T)C(i)+Q(i)+\sum_{j=1}^{L}\pi_{ij}P_{j}(t;T)\\
  &\quad-\big[P_i(t;T)B(i)+C(i)^{\top}P_i(t;T)D(i)+S(i)^{\top}\big]\big[R(i)+D(i)^{\top}P_i(t;T)D(i)\big]^{-1}\\
  &\quad\times \big[P_i(t;T)B(i)+C(i)^{\top}P_i(t;T)D(i)+S(i)^{\top}\big]^{\top}=0,\quad t\in[0,T],\\
  &P_i(T;T)=G(i),\quad i\in\mathcal{S},
  \end{aligned}
  \right.
  \end{equation*}
  admits a solution $\mathbf{P(\cdot;T)}\in \mathcal{D}\left(C\left([0,T];\mathbb{S}^{n}\right)\right)$ such that for $i\in\mathcal{S}$:
  \begin{enumerate}
  \item[(i)]   $R(i)+D(i)^{\top}P_i(t;T)D(i)>0$, for all $t\in[0,T]$,
  \item[(ii)] $P(i)\triangleq\lim_{T\rightarrow\infty}P_{i}(0;T)$ is in $\mathbb{S}^n$ and satisfies $R(i)+D(i)^{\top}P(i)D(i)>0$.
  \end{enumerate}
  Then $\mathbf{P}=\left(P(1),P(2),\cdots,P(L)\right)$ solves the following CAREs:
  \begin{equation*}
   \begin{aligned}
   &P(i)A(i)+A(i)^{\top}P(i)+C(i)^{\top}P(i)C(i)+Q(i)-\big[P(i)B(i)+C(i)^{\top}P(i)D(i)+S(i)^{\top}\big]\\
  &\times\big[R(i)+D(i)^{\top}P(i)D(i)\big]^{-1}\big[P(i)B(i)+C(i)^{\top}P(i)D(i)+S(i)^{\top}\big]^{\top}+\sum_{j=1}^{L}\pi_{ij}P(j)=0, \quad i\in\mathcal{S}.
   \end{aligned}
  \end{equation*}
  \end{lemma}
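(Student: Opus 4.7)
The plan is to exploit the time-invariance of the CDRE coefficients to transfer the convergence at $t=0$ into the differential equation itself. Abbreviate
\[
F_i(\mathbf{P}) \triangleq P(i)A(i)+A(i)^{\top}P(i)+C(i)^{\top}P(i)C(i)+Q(i)+\sum_{j=1}^{L}\pi_{ij}P(j) - M_i(\mathbf{P})\bigl[R(i)+D(i)^{\top}P(i)D(i)\bigr]^{-1}M_i(\mathbf{P})^{\top},
\]
where $M_i(\mathbf{P}) \triangleq P(i)B(i)+C(i)^{\top}P(i)D(i)+S(i)^{\top}$, so that the CDRE reads $\dot{P}_i(t;T) + F_i(\mathbf{P}(t;T)) = 0$ with $P_i(T;T) = G(i)$, while the target CARE reads $F_i(\mathbf{P}) = 0$ for each $i \in \mathcal{S}$.

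First, I would observe that because all coefficients (including the generator $\Pi$) are independent of time and of the horizon $T$, the substitution $\tilde{P}_i(s) \triangleq P_i(T-s;T)$ reveals that $\tilde{\mathbf{P}}(\cdot)$ satisfies a forward system with initial datum $\tilde{P}_i(0)=G(i)$ that does not involve $T$. By uniqueness of the solution on $[0,T]$ (guaranteed on the interval where hypothesis (i) keeps the inverse well defined), this gives the time-invariance identity
\[
P_i(t;T) = P_i(0;T-t), \qquad t \in [0,T],\ i \in \mathcal{S}.
\]
Combined with hypothesis (ii), this yields the pointwise convergence $P_i(t;T) \to P(i)$ as $T \to \infty$ for every fixed $t \geq 0$ and $i \in \mathcal{S}$.

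Next, for any fixed $t \geq 0$ and $T > t+1$, I will integrate the CDRE over $[t,t+1]$ to obtain
\[
P_i(t+1;T) - P_i(t;T) = -\int_{t}^{t+1} F_i(\mathbf{P}(s;T))\,ds.
\]
The left-hand side tends to $P(i)-P(i)=0$ as $T \to \infty$. For the right-hand side, the change of variable $\tau = T-s$ rewrites the integral as $\int_{T-t-1}^{T-t} F_i(\mathbf{P}(0;\tau))\,d\tau$; since $\mathbf{P}(0;\tau) \to \mathbf{P}$ and $R(i)+D(i)^{\top}P(i)D(i)>0$, continuity of $F_i$ at $\mathbf{P}$ gives $F_i(\mathbf{P}(0;\tau)) \to F_i(\mathbf{P})$ as $\tau \to \infty$, and the elementary fact that $\int_{T-1}^{T} g(\tau)\,d\tau \to L$ whenever $g(\tau) \to L$ forces $\int_{T-t-1}^{T-t} F_i(\mathbf{P}(0;\tau))\,d\tau \to F_i(\mathbf{P})$. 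Letting $T \to \infty$ in the displayed identity therefore yields $F_i(\mathbf{P}) = 0$ for each $i \in \mathcal{S}$, which is precisely the CARE.

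The main technical point I anticipate is the continuity of $F_i$ at $\mathbf{P}$, which hinges on the invertibility of $R(i)+D(i)^{\top}P(i)D(i)$. Hypothesis (i) supplies invertibility along each finite-horizon trajectory, but it is hypothesis (ii)---the strict positivity in the limit---that guarantees the inverse survives the passage to the limit, so that the nonlinear term in $F_i$ depends continuously on its argument near $\mathbf{P}$ and the pointwise convergence of the integrand is genuine. Once this is in place, the argument reduces to the time-invariance identity and the simple averaging lemma above.
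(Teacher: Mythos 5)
Your proof is correct and follows essentially the same route as the paper's: both use uniqueness to get the time-invariance identity $P_i(t;T)=P_i(0;T-t)$, integrate the Riccati equation over a unit interval, and pass to the limit $T\to\infty$ using the convergence of $P_i(0;T)$ and the continuity of the Riccati map guaranteed by $R(i)+D(i)^{\top}P(i)D(i)>0$. The only cosmetic difference is that the paper works with the forward reparametrization $\Sigma_i(t)=P_i(0;t)$ directly, while you integrate the backward equation and change variables afterward.
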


  \begin{proof}
    Obviously, for any $0\leq t\leq T_{1}\leq T_2$, we have $P_{i}(T_1-t;T_1)=P_{i}(T_2-t;T_2)$ for any $i\in\mathcal{S}$. Thus, if we let $\Sigma_i(t)\triangleq P_{i}(0;t)=P_{i}(T-t;T), \, T\geq t, \, i\in\mathcal{S}$,
    then $\mathbf{\Sigma(\cdot)}=\left[\Sigma_1(\cdot),\Sigma_2(\cdot),\cdots,\Sigma_{L}(\cdot)\right]$ satisfies the following differential equation:
    \begin{equation*}
    \begin{aligned}
    \Dot{\Sigma}_{i}(t)&=\Sigma_i(t)A(i)+A(i)^{\top}\Sigma_i(t)+C(i)^{\top}\Sigma_i(t)C(i)+Q(i)+\sum_{j=1}^{L}\pi_{ij}\Sigma_j(t)\\
    &\quad-\big[\Sigma_i(t)B(i)+C(i)^{\top}\Sigma_i(t)D(i)+S(i)^{\top}\big]\big[R(i)+D(i)^{\top}\Sigma_i(t)D(i)\big]^{-1}\\
    &\quad\times \big[\Sigma_i(t)B(i)+C(i)^{\top}\Sigma_i(t)D(i)+S(i)^{\top}\big]^{\top}, \quad \forall i\in\mathcal{S}.
    \end{aligned}
    \end{equation*}
    Therefore, for any $T>0$, the following equation holds
    $$
    \begin{aligned}
    \Sigma_{i}(T+1)-\Sigma_{i}(T)&=\int_{T}^{T+1}\big[\Sigma_i(t)A(i)+A(i)^{\top}\Sigma_i(t)+C(i)^{\top}\Sigma_i(t)C(i)+Q(i)+\sum_{j=1}^{L}\pi_{ij}\Sigma_j(t)\\
    &\quad-\big(\Sigma_i(t)B(i)+C(i)^{\top}\Sigma_i(t)D(i)+S(i)^{\top}\big)\big(R(i)+D(i)^{\top}\Sigma_i(t)D(i)\big)^{-1}\\
    &\quad\times \big(\Sigma_i(t)B(i)+C(i)^{\top}\Sigma_i(t)D(i)+S(i)^{\top}\big)^{\top}
    \big]dt.
    \end{aligned}
    $$
    The desired result follows by letting $T\rightarrow \infty$ in the above equation.
  \end{proof}

\begin{theorem}\label{thm-admissible-controls-nonempty-charateristic}
The following statements are equivalent:
\begin{description}
  \item[(i)] $\mathcal{H}[A,C;B,D]_{\alpha}\neq\emptyset$;
  \item[(ii)] $\mathcal{V}_{cl}[0,\infty)\neq\emptyset$;
  \item[(iii)] $\mathcal{U}_{ad}(x,i)\neq\emptyset$ for all $(x,i)\in\mathbb{R}^{n}\times \mathcal{S}$;
  \item[(iv)] $\mathcal{U}_{ad}^{0}(x,i) \neq\emptyset$ for all $(x,i)\in\mathbb{R}^{n}\times \mathcal{S}$;
  \item[(v)] The following CAREs admits a positive solution $\mathbf{P}\in\mathcal{D}\left(\mathbb{S}_{+}^{n}\right)$:
  \begin{equation}\label{admissible-controls-nonempty-charateristic-1}
\hspace{-0.5cm}\begin{aligned}
 &P(i)A(i)+A(i)^{\top}P(i)+C(i)^{\top}P(i)C(i)+I-\big[P(i)B(i)+C(i)^{\top}P(i)D(i)\big]\\
 &\times \big[I+D(i)^{\top}P(i)D(i)\big]^{-1}\big[P(i)B(i)+C(i)^{\top}P(i)D(i)\big]^{\top}+\sum_{j=1}^{L}\pi_{ij}P(j)=0, \quad i\in\mathcal{S}.
 \end{aligned}
  \end{equation}
  In this case,  we have: $\mathbf{\Gamma}\triangleq(\Gamma(1),\cdots,\Gamma(L))\in \mathcal{H}[A,C;B,D]_{\alpha},$ where $\Gamma(i)$ is defined by 
  \begin{equation}\label{admissible-controls-nonempty-charateristic-2}
  \Gamma(i)=-\big[I+D(i)^{\top}P(i)D(i)\big]^{-1}\big[P(i)B(i)+C(i)^{\top}P(i)D(i)\big]^{\top},\quad \forall i\in\mathcal{S}.
  \end{equation}
\end{description}
\end{theorem}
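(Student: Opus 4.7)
The plan is to prove the cycle (i)$\Rightarrow$(ii)$\Rightarrow$(iii)$\Rightarrow$(iv)$\Rightarrow$(v)$\Rightarrow$(i), with the last implication also producing the explicit stabilizer $\mathbf{\Gamma}$. The first three implications are essentially bookkeeping. For (i)$\Rightarrow$(ii), if $\mathbf{\Theta}\in\mathcal{H}[A,C;B,D]_{\alpha}$ then $[A_{\Theta},C_{\Theta}]_{\alpha}$ is $L^{2}$-stable, and Proposition \ref{prop-FSDE} applied to \eqref{state-Theta} yields $X_{\Theta}(\cdot;x,i,\nu)\in L_{\mathbb{F}}^{2}(\mathbb{R}^{n})$ for every $(x,i)$ and every $\nu\in L_{\mathbb{F}}^{2}(\mathbb{R}^{m})$, so $(\mathbf{\Theta},\nu)\in\mathcal{V}_{cl}[0,\infty)$. (ii)$\Rightarrow$(iii) is Remark \ref{rmk-def}(iii): the outcome $u=\Theta(\alpha)X_{\Theta}+\nu$ of any $(\mathbf{\Theta},\nu)\in\mathcal{V}_{cl}[0,\infty)$ is admissible at every initial state. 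For (iii)$\Rightarrow$(iv), fix $i$, pick $u_{0}\in\mathcal{U}_{ad}(0,i)$ and, for arbitrary $x$, pick $u_{x}\in\mathcal{U}_{ad}(x,i)$; by linearity of \eqref{state}, $X(\cdot;x,i,u_{x})-X(\cdot;0,i,u_{0})$ solves the homogeneous system starting from $x$ with control $u_{x}-u_{0}$ and lies in $L_{\mathbb{F}}^{2}(\mathbb{R}^{n})$, hence $u_{x}-u_{0}\in\mathcal{U}_{ad}^{0}(x,i)$.

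The core step is (iv)$\Rightarrow$(v). On each horizon $[0,T]$ I would consider the homogeneous LQ problem with weights $Q(i)=I$, $R(i)=I$, $S(i)=0$, $G(i)=0$. Since $R=I>0$, standard regime-switching LQ theory provides a unique nonnegative solution $\mathbf{P}(\cdot;T)$ of the CDREs in Lemma \ref{lem-useful}, with $\langle P_{i}(0;T)x,x\rangle$ equal to the finite-horizon value. Monotonicity $P_{i}(0;T_{1})\leq P_{i}(0;T_{2})$ for $T_{1}\leq T_{2}$ is immediate from the nonnegativity of the integrand, while any $u\in\mathcal{U}_{ad}^{0}(x,i)$ supplied by (iv) provides the $T$-free upper bound
\begin{equation*}
\langle P_{i}(0;T)x,x\rangle\leq \mathbb{E}\int_{0}^{T}\bigl[|X(t)|^{2}+|u(t)|^{2}\bigr]dt\leq \mathbb{E}\int_{0}^{\infty}\bigl[|X(t)|^{2}+|u(t)|^{2}\bigr]dt<\infty.
\end{equation*}
Hence $P(i)\triangleq\lim_{T\to\infty}P_{i}(0;T)\in\overline{\mathbb{S}_{+}^{n}}$ exists, and since $I+D(i)^{\top}P(i)D(i)>0$ automatically, Lemma \ref{lem-useful} delivers the CARE \eqref{admissible-controls-nonempty-charateristic-1}. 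Positive definiteness $P(i)>0$ follows from the case $T=1$: if $\langle P_{i}(0;1)x,x\rangle=0$ for some $x\neq 0$, a minimizing sequence $u_{n}$ on $[0,1]$ would force $\mathbb{E}\int_{0}^{1}(|X_{n}|^{2}+|u_{n}|^{2})dt\to 0$, while a Gronwall estimate on $[0,1]$ forces $\mathbb{E}|X_{n}(t)-x|^{2}\to 0$ uniformly in $t\in[0,1]$, contradicting $x\neq 0$.

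For (v)$\Rightarrow$(i), write $M(i)\triangleq P(i)B(i)+C(i)^{\top}P(i)D(i)$ and $N(i)\triangleq I+D(i)^{\top}P(i)D(i)$, so $\Gamma(i)=-N(i)^{-1}M(i)^{\top}$. The identities $M\Gamma=-MN^{-1}M^{\top}=\Gamma^{\top}M^{\top}$ and $\Gamma^{\top}D^{\top}PD\,\Gamma=\Gamma^{\top}(N-I)\Gamma=MN^{-1}M^{\top}-\Gamma^{\top}\Gamma$ reduce $P(i)A_{\Gamma}(i)+A_{\Gamma}(i)^{\top}P(i)+C_{\Gamma}(i)^{\top}P(i)C_{\Gamma}(i)$ to $P(i)A(i)+A(i)^{\top}P(i)+C(i)^{\top}P(i)C(i)-M(i)N(i)^{-1}M(i)^{\top}-\Gamma(i)^{\top}\Gamma(i)$; substituting \eqref{admissible-controls-nonempty-charateristic-1} yields
\begin{equation*}
P(i)A_{\Gamma}(i)+A_{\Gamma}(i)^{\top}P(i)+C_{\Gamma}(i)^{\top}P(i)C_{\Gamma}(i)+\sum_{j=1}^{L}\pi_{ij}P(j)=-I-\Gamma(i)^{\top}\Gamma(i)<0.
\end{equation*}
Since $\mathbf{P}\in\mathcal{D}(\mathbb{S}_{+}^{n})$, Proposition \ref{prop-L2}(iv) certifies the $L^{2}$-stability of $[A_{\Gamma},C_{\Gamma}]_{\alpha}$, i.e., $\mathbf{\Gamma}\in\mathcal{H}[A,C;B,D]_{\alpha}$.

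The principal obstacle will be (iv)$\Rightarrow$(v): converting the mere pointwise existence of admissible controls into a $T$-uniform bound on $P_{i}(0;T)$ and then upgrading the limit to a \emph{positive-definite} CARE solution, by combining dynamic programming for the regime-switching CDRE with a local Gronwall estimate. Once these are secured, the remaining implications are linear-algebraic and fall out of Lemma \ref{lem-useful} together with criterion (iv) of Proposition \ref{prop-L2}.
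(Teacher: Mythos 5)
Your proposal is correct and follows essentially the same route as the paper: the cycle (i)$\Rightarrow$(ii)$\Rightarrow$(iii)$\Rightarrow$(iv)$\Rightarrow$(v)$\Rightarrow$(i), with (iv)$\Rightarrow$(v) handled via the finite-horizon CDREs with weights $Q=R=I$, monotone convergence of $P_i(0;T)$ under the $T$-free bound supplied by admissible controls, and Lemma \ref{lem-useful}, and (v)$\Rightarrow$(i) by the same completion-of-squares identity feeding into Proposition \ref{prop-L2}(iv). Your Gronwall argument for strict positive definiteness of $P(i)$ is a welcome detail that the paper merely asserts.
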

\begin{proof}
The implication of $(i) \Rightarrow (ii)\Rightarrow (iii)$ is obvious.  We only verify the implication of $(iii) \Rightarrow (iv)\Rightarrow (v)\Rightarrow (i)$ in the rest of the proof.

$(iii) \Rightarrow (iv)$. For any $u_{1}(\cdot)\in \mathcal{U}_{ad}(x,i)$ and  $u_{2}(\cdot)\in \mathcal{U}_{ad}(0,i)$, we have
$$ X(\cdot;x,i,u_{1}(\cdot))-X(\cdot;0,i,u_{2}(\cdot))=X^{0}(\cdot;x,i,u_{1}(\cdot)-u_{2}(\cdot))\in L_{\mathbb{F}}^{2}(\mathbb{R}^{n}),$$
  which implies $u_{1}(\cdot)-u_{2}(\cdot)\in\mathcal{U}_{ad}^{0}(x,i)$. Therefore, $\mathcal{U}_{ad}^{0}(x,i)\neq\emptyset$ for all $(x,i)\in\mathbb{R}^{n}\times \mathcal{S}$.

 $(iv) \Rightarrow (v)$. Let $\{e_1,\cdots,e_n\}$ be the standard basis of $\mathbb{R}^{n}$. Take $u_{i}^{k}(\cdot)\equiv u(\cdot;e_k,i)\in\mathcal{U}_{ad}^{0}(e_k,i)$ 
 and set
 $$
 \begin{aligned}
 \mathbf{U}_{i}(\cdot)\triangleq \left[u(\cdot;e_{1},i),\cdots,u(\cdot;e_n,i)\right],\quad
 \mathbb{X}_{i}(\cdot)\triangleq \left[X^{0}(\cdot;e_{1},i,u_{i}^1),\cdots,X^{0}(\cdot;e_{n},i,u_{i}^n)\right],\quad i\in\mathcal{S}.
 \end{aligned}$$
 Considering the cost functional:
 $\bar{J}^{0}(x,i;u(\cdot))\triangleq\mathbb{E}\int_{0}^{\infty}\big[|X^{0}(t)|^{2}+|u(t)|^{2}\big]dt$, we have
 $$
 \begin{aligned}
   \inf_{u(\cdot)\in\mathcal{U}_{ad}^{0}(x,i)} \bar{J}^{0}(x,i;u(\cdot))
   &\leq \mathbb{E}\int_{0}^{\infty}\big[|\mathbb{X}_i(t)x|^{2}+|\mathbf{U}_{i}(t)x|^{2}\big]dt\\
   &=\Big<\big[\mathbb{E}\int_{0}^{\infty}\big(\mathbb{X}_i(t)^{\top}\mathbb{X}_i(t)+\mathbf{U}_{i}(t)^{\top}\mathbf{U}_{i}(t)\big)dt\big]x,x\Big>\\
   &\triangleq\left<\Lambda(i)x,x\right>, \quad \forall (x,i)\in\mathbb{R}^{n}\times \mathcal{S}.
 \end{aligned}
 $$
 For any fixed $T>0$, let $X_{T}^{0}(\cdot)\triangleq X^{0}(\cdot;x,i,u)\big|_{[0,T]}$ and consider the following
cost functional
 $$\bar{J}_{T}^{0}(x,i;u(\cdot))\triangleq\mathbb{E}\int_{0}^{T}\big[|X_{T}^{0}(t;T)|^{2}+|u(t)|^{2}\big]dt.$$
 It follows from Zhang et al. \cite{zhang2021open} that the following CDREs:
 $$
 \left\{
\begin{aligned}
&\Dot{P}_{i}(t;T)+P_i(t;T)A(i)+A(i)^{\top}P_i(t;T)+C(i)^{\top}P_i(t;T)C(i)+I+\sum_{j=1}^{L}\pi_{ij}P_{j}(t;T)\\
&\quad-\big[P_i(t;T)B(i)+C(i)^{\top}P_i(t;T)D(i)\big]\big[I+D(i)^{\top}P_i(t;T)D(i)\big]^{-1}\\
&\quad\times \big[P_i(t;T)B(i)+C(i)^{\top}P_i(t;T)D(i)\big]^{\top}=0,\quad t\in[0,T],\\
&P_i(T;T)=0,\quad i\in\mathcal{S},
\end{aligned}
\right.
 $$
 admits a solution $\mathbf{P(\cdot;T)}\in \mathcal{D}\left(C\left([0,T];\mathbb{S}^{n}\right)\right)$ such that
$\bar{J}_{T}^{0}(x,i;u(\cdot))=\big<P_{i}(0;T)x,x\big>, \forall (x,i)\in \mathbb{R}^{n}\times \mathcal{S}.$
Noting that for any $0\leq T_1\leq T_2$, we have
\begin{align*}
&\bar{J}_{T_1}^{0}(x,i;u(\cdot)) \leq \bar{J}_{T_2}^{0}(x,i;u(\cdot))\leq \bar{J}^{0}(x,i;u(\cdot)),\quad\forall (x,i,u(\cdot))\in \mathbb{R}^{n}\times \mathcal{S}\times L_{\mathbb{F}}^{2}(\mathbb{R}^{m}).
\end{align*}
Therefore, for any $i\in\mathcal{S}$, one has
$0< P_{i}(0;T_1)\leq P_{i}(0;T_2)\leq \Lambda(i),\, \forall 0\leq T_1\leq T_2$.
Hence,  for each $i\in\mathcal{S}$, $P_{i}(0;T)$ converges increasingly to some $P(i)\in\mathbb{S}_{+}^{n}$ as $T\rightarrow\infty$. Thus, by Lemma \ref{lem-useful}, $\mathbf{P}=\left(P(1),P(2),\cdots,P(L)\right)\in\mathcal{D}\left(\mathbb{S}_{+}^{n}\right)$ solves the CAREs \eqref{admissible-controls-nonempty-charateristic-1}.

$(v)\Rightarrow(i)$. Let $\mathbf{P}\in\mathcal{D}\left(\mathbb{S}_{+}^{n}\right)$ be the solution to CAREs \eqref{admissible-controls-nonempty-charateristic-1} and $\mathbf{\Gamma}$ defined by \eqref{admissible-controls-nonempty-charateristic-2}. Then
\begin{align*}
 0&= P(i)A(i)+A(i)^{\top}P(i)+C(i)^{\top}P(i)C(i)+I+\sum_{j=1}^{L}\pi_{ij}P(j)-\big[P(i)B(i)+C(i)^{\top}P(i)D(i)\big]\\
 &\quad\times \big[I+D(i)^{\top}P(i)D(i)\big]^{-1}\big[P(i)B(i)+C(i)^{\top}P(i)D(i)\big]^{\top}  \\
 &=P(i)A(i)+A(i)^{\top}P(i)+C(i)^{\top}P(i)C(i)+I+\sum_{j=1}^{L}\pi_{ij}P(j)-\Gamma(i)^{\top}\big[I+D(i)^{\top}P(i)D(i)\big]\Gamma(i)\\
 &=P(i)\big[A(i)+B(i)\Gamma(i)\big]+\big[A(i)+B(i)\Gamma(i)\big]^{\top}P(i)+\big[C(i)+D(i)\Gamma(i)\big]^{\top}P(i)\big[C(i)+D(i)\Gamma(i)\big]\\
 &\quad+\sum_{j=1}^{L}\pi_{ij}P(j)+I+\Gamma(i)^{\top}\Gamma(i),\quad\forall i\in\mathcal{S},
\end{align*}
which implies that for any $i\in\mathcal{S}$,
\begin{align*}
&P(i)\big[A(i)+B(i)\Gamma(i)\big]+\big[A(i)+B(i)\Gamma(i)\big]^{\top}P(i)\\
&\qquad+\big[C(i)+D(i)\Gamma(i)\big]^{\top}P(i)\big[C(i)+D(i)\Gamma(i)\big]+\sum_{j=1}^{L}\pi_{ij}P(j)=-\big[I+\Gamma(i)^{\top}\Gamma(i)\big]<0.
\end{align*}
By Proposition \ref{prop-L2} (iv), one can obtain $\mathbf{\Gamma}\in \mathcal{H}[A,C;B,D]_{\alpha}\neq\emptyset$. Thus we complete the proof.
\end{proof}

Theorem \ref{thm-admissible-controls-nonempty-charateristic} tells us that the non-emptiness of closed-loop strategy set is equivalent to the non-emptiness of all admissible control sets, which in turn is equivalent to the L$^{2}$-stabilizability of the control system. 
The following results further explicitly describe the closed-loop strategy set and admissible control sets.

\begin{proposition}\label{prop-closed-loop-admissible-controls-characterization}
Suppose that $\mathcal{H}[A,C;B,D]_{\alpha}\neq\emptyset$. Then the closed-loop strategy $\mathcal{V}_{cl}[0,\infty)$ of Problem (M-SLQ) is given by
\begin{equation}\label{closed-loop-admissible-controls-characterization}
  \mathcal{V}_{cl}[0,\infty)= \mathcal{H}[A,C;B,D]_{\alpha}\times L_{\mathbb{F}}^{2}(\mathbb{R}^{m}).
\end{equation}
\end{proposition}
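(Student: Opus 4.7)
The plan is to establish the two set inclusions separately. For the easy direction $\mathcal{H}[A,C;B,D]_{\alpha}\times L_{\mathbb{F}}^{2}(\mathbb{R}^{m}) \subseteq \mathcal{V}_{cl}[0,\infty)$, I would take an arbitrary $(\mathbf{\Theta},\nu(\cdot))$ in the left-hand set and observe that the closed-loop state equation \eqref{state-Theta} is an inhomogeneous linear SDE of the form \eqref{FSDE} with coefficients $A_\Theta,C_\Theta$ and inhomogeneous terms $B(\alpha_\cdot)\nu(\cdot)+b(\cdot)$ and $D(\alpha_\cdot)\nu(\cdot)+\sigma(\cdot)$, both of which lie in $L_{\mathbb{F}}^{2}(\mathbb{R}^n)$ since $\nu,b,\sigma\in L_{\mathbb{F}}^{2}$ and $B(\cdot),D(\cdot)$ are bounded. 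Since $\mathbf{\Theta}\in\mathcal{H}[A,C;B,D]_{\alpha}$ means the system $[A_\Theta,C_\Theta]_{\alpha}$ is $L^{2}$-stable, Proposition \ref{prop-FSDE} directly gives $X_\Theta(\cdot;x,i,\nu)\in L_{\mathbb{F}}^{2}(\mathbb{R}^n)$ for every initial data $(x,i)$, proving $(\mathbf{\Theta},\nu(\cdot))\in\mathcal{V}_{cl}[0,\infty)$.

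For the nontrivial reverse inclusion $\mathcal{V}_{cl}[0,\infty)\subseteq\mathcal{H}[A,C;B,D]_{\alpha}\times L_{\mathbb{F}}^{2}(\mathbb{R}^{m})$, the core idea is to exploit linearity to strip away the inhomogeneous parts. Given $(\mathbf{\Theta},\nu(\cdot))\in\mathcal{V}_{cl}[0,\infty)$, for any fixed $i\in\mathcal{S}$ and any $x\in\mathbb{R}^{n}$, both $X_\Theta(\cdot;x,i,\nu)$ and $X_\Theta(\cdot;0,i,\nu)$ lie in $L_{\mathbb{F}}^{2}(\mathbb{R}^n)$ by the definition of $\mathcal{V}_{cl}[0,\infty)$. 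Subtracting the two SDEs, the difference $\Psi(\cdot)\triangleq X_\Theta(\cdot;x,i,\nu)-X_\Theta(\cdot;0,i,\nu)$ solves the homogeneous equation
\begin{equation*}
d\Psi(t)=A_\Theta(\alpha_t)\Psi(t)\,dt+C_\Theta(\alpha_t)\Psi(t)\,dW(t),\quad \Psi(0)=x,\ \alpha_0=i,
\end{equation*}
and lies in $L_{\mathbb{F}}^{2}(\mathbb{R}^n)$. Since $(x,i)$ is arbitrary, this shows that the system $[A_\Theta,C_\Theta]_{\alpha}$ is $L^{2}$-globally integrable.

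Invoking the equivalence $(\mathrm{ii})\Leftrightarrow(\mathrm{i})$ in Proposition \ref{prop-L2}, $L^{2}$-global integrability upgrades to $L^{2}$-stability, so $\mathbf{\Theta}\in\mathcal{H}[A,C;B,D]_{\alpha}$, and trivially $\nu(\cdot)\in L_{\mathbb{F}}^{2}(\mathbb{R}^{m})$ by definition of $\mathcal{V}_{cl}[0,\infty)$. The only mild subtlety, which I would explicitly note, is that no single choice of initial state suffices: one genuinely needs the ``for all $(x,i)$'' clause built into the definition of $\mathcal{V}_{cl}[0,\infty)$ in order to conclude $L^{2}$-integrability for every starting point of the homogeneous system, which is precisely what Proposition \ref{prop-L2}(ii) requires. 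Combining the two inclusions yields \eqref{closed-loop-admissible-controls-characterization}.
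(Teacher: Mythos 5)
Your proposal is correct and follows essentially the same route as the paper: the easy inclusion via Proposition \ref{prop-FSDE}, and the reverse inclusion by subtracting $X_{\Theta}(\cdot;x,i,\nu)-X_{\Theta}(\cdot;0,i,\nu)$ to obtain the homogeneous solution in $L_{\mathbb{F}}^{2}(\mathbb{R}^n)$ for all $(x,i)$ and then invoking the equivalence of $L^{2}$-global integrability and $L^{2}$-stability from Proposition \ref{prop-L2}. Your explicit remark that the ``for all $(x,i)$'' clause in the definition of $\mathcal{V}_{cl}[0,\infty)$ is what makes the argument work is a worthwhile clarification that the paper leaves implicit.
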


\begin{proof}
Obviously, $\mathcal{H}[A,C;B,D]_{\alpha}\times L_{\mathbb{F}}^{2}(\mathbb{R}^{m})\subseteq \mathcal{V}_{cl}[0,\infty)$.
Thus, we only need to verify that, for any pair $(\mathbf{\Theta},\nu(\cdot))\in \mathcal{V}_{cl}[0,\infty)$, we have $\mathbf{\Theta}\in\mathcal{H}[A,C;B,D]_{\alpha}$. Clearly,
$$X_{\Theta}^{0}(\cdot;x,i,0)=X_{\Theta}(\cdot;x,i,\nu)-X_{\Theta}(\cdot;0,i,\nu)\in L_{\mathbb{F}}^{2}(\mathbb{R}^{n}),\quad \forall (x,i)\in\mathbb{R}^{n}\times\mathcal{S},$$
which implies the desired result. Therefore, we have the equation \eqref{closed-loop-admissible-controls-characterization} holds.
\end{proof}

\begin{proposition}\label{prop-open-loop-admissible-controls-characterization}
Suppose that $\mathcal{H}[A,C;B,D]_{\alpha}\neq\emptyset$. Then for any $\left(x,i\right)\in \mathbb{R}^{n}\times\mathcal{S}$, we have
\begin{equation}\label{open-loop-admissible-controls-characterization}
 \mathcal{U}_{ad}(x,i)=\left\{\Theta(\alpha(\cdot))X_{\Theta}(\cdot;x,i,\nu)+\nu(\cdot)\Big|\nu(\cdot)\in L_{\mathbb{F}}^{2}(\mathbb{R}^{m})\right\},\quad\forall \mathbf{\Theta}\in\mathcal{H}[A,C;B,D]_{\alpha}.
\end{equation}
\end{proposition}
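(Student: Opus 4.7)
The plan is to prove both inclusions of \eqref{open-loop-admissible-controls-characterization} by exploiting the identity $u(\cdot) = \Theta(\alpha(\cdot))X(\cdot) + \nu(\cdot)$ as a bijection between control processes, fix $\mathbf{\Theta}\in\mathcal{H}[A,C;B,D]_{\alpha}$ throughout, and use Proposition \ref{prop-FSDE} applied to the closed-loop system $[A_\Theta,C_\Theta]_\alpha$.

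For the inclusion ``$\supseteq$'', given any $\nu(\cdot)\in L_{\mathbb{F}}^{2}(\mathbb{R}^{m})$, I would first invoke Proposition \ref{prop-FSDE} with coefficients $A_\Theta,C_\Theta$ and inhomogeneous terms $B\nu+b$, $D\nu+\sigma$ (both in $L^2_{\mathbb{F}}$ since $\Theta(i)$ is bounded for $i\in\mathcal{S}$). This yields $X_\Theta(\cdot;x,i,\nu)\in L_{\mathbb{F}}^{2}(\mathbb{R}^n)$, and consequently $u(\cdot):=\Theta(\alpha(\cdot))X_\Theta(\cdot;x,i,\nu)+\nu(\cdot)\in L_{\mathbb{F}}^{2}(\mathbb{R}^m)$. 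A direct algebraic rewriting of \eqref{state-Theta} using the definitions in \eqref{notations-theta-problem} shows
\begin{align*}
dX_\Theta(t) &= \bigl[A(\alpha_t)X_\Theta(t)+B(\alpha_t)u(t)+b(t)\bigr]dt\\
&\quad+\bigl[C(\alpha_t)X_\Theta(t)+D(\alpha_t)u(t)+\sigma(t)\bigr]dW(t),
\end{align*}
so $X_\Theta(\cdot;x,i,\nu)$ solves \eqref{state} driven by $u$; by uniqueness, $X(\cdot;x,i,u)=X_\Theta(\cdot;x,i,\nu)\in L_{\mathbb{F}}^{2}(\mathbb{R}^n)$, giving $u\in\mathcal{U}_{ad}(x,i)$.

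For the inclusion ``$\subseteq$'', given $u(\cdot)\in\mathcal{U}_{ad}(x,i)$, set $X(\cdot):=X(\cdot;x,i,u)\in L_{\mathbb{F}}^{2}(\mathbb{R}^n)$ and define $\nu(\cdot):=u(\cdot)-\Theta(\alpha(\cdot))X(\cdot)$. Boundedness of $\Theta$ together with $u,X\in L^2_{\mathbb{F}}$ ensures $\nu\in L_{\mathbb{F}}^{2}(\mathbb{R}^m)$. Substituting $u=\Theta(\alpha)X+\nu$ into \eqref{state} converts the drift and diffusion coefficients into $A_\Theta X+B\nu+b$ and $C_\Theta X+D\nu+\sigma$ respectively, so $X(\cdot)$ satisfies precisely the SDE \eqref{state-Theta} associated with $(\mathbf{\Theta},\nu)$. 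By the uniqueness of solutions to \eqref{state-Theta}, we conclude $X(\cdot)=X_\Theta(\cdot;x,i,\nu)$, and hence $u(\cdot)=\Theta(\alpha(\cdot))X_\Theta(\cdot;x,i,\nu)+\nu(\cdot)$, which is of the required form.

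There is no serious obstacle in this argument: the result is essentially a bookkeeping identity combined with the $L^2$-stability of the closed-loop system. The only point requiring mild care is confirming that the change of variables $\nu\leftrightarrow u-\Theta(\alpha)X$ preserves membership in $L_{\mathbb{F}}^{2}$, which is immediate since $\mathcal{S}$ is finite and each $\Theta(i)\in\mathbb{R}^{m\times n}$ acts as a bounded linear map. The characterization \eqref{open-loop-admissible-controls-characterization} then follows by combining both inclusions.
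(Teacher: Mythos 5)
Your proposal is correct and follows essentially the same route as the paper's proof: one direction applies Proposition \ref{prop-FSDE} to the $L^{2}$-stable closed-loop system $[A_{\Theta},C_{\Theta}]_{\alpha}$ with inhomogeneous terms $B\nu+b$ and $D\nu+\sigma$, and the other uses the substitution $\nu=u-\Theta(\alpha(\cdot))X(\cdot;x,i,u)$ together with uniqueness of solutions. Your write-up merely makes explicit the $L^{2}$-membership checks and the algebraic identification of the two state equations, which the paper leaves implicit.
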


\begin{proof}
 From the definition of $\mathcal{H}[A,C;B,D]_{\alpha}$, we know that $\left[A_{\Theta}, C_{\Theta}\right]_{\alpha}$ is $L^{2}$-stable for any $\mathbf{\Theta}\in\mathcal{H}[A,C;B,D]_{\alpha}$. Thus, let $\nu(\cdot)\in L_{\mathbb{F}}^{2}(\mathbb{R}^{m})$ and by Proposition \ref{prop-FSDE}, we can derive $X_{\Theta}(\cdot;x,i,\nu)\in L_{\mathbb{F}}^{2}(\mathbb{R}^{n})$. Therefore, if we set $u(\cdot)\equiv \Theta(\alpha(\cdot))X_{\Theta}(\cdot;x,i,\nu)+\nu(\cdot)$,  
  then we have $X_{\Theta}(\cdot;x,i,\nu)=X(\cdot;x,i,u)$, 
  which implies $u(\cdot)\in \mathcal{U}_{ad}(x,i)$.

  On the other hand, for any $u(\cdot)\in\mathcal{U}_{ad}(x,i)$, let
  $\nu(\cdot)\triangleq u(\cdot)-\Theta(\alpha(\cdot))X(\cdot;x,i,u)\in L_{\mathbb{F}}^{2}(\mathbb{R}^{m}).$
  Clearly, 
  $X(\cdot;x,i,u)=X_{\Theta}(\cdot;x,i,\nu)\in L_{\mathbb{F}}^{2}(\mathbb{R}^{n})$ and so $u(\cdot)$ is in the right side of equation \eqref{open-loop-admissible-controls-characterization}. Thus we complete the proof.
\end{proof}

\begin{remark}\label{rmk-SLQ-0-admissible}\rm
Similarly, one can easily verify that
\begin{equation}\label{open-loop-admissible-controls-characterization-0}
 \mathcal{U}_{ad}^{0}(x,i)=\left\{\Theta(\alpha(\cdot))X_{\Theta}^{0}(\cdot;x,i,\nu)+\nu(\cdot)\Big|\nu(\cdot)\in L_{\mathbb{F}}^{2}(\mathbb{R}^{m})\right\}\qquad\forall \mathbf{\Theta}\in\mathcal{H}[A,C;B,D]_{\alpha}.
\end{equation}
Moreover, Proposition \ref{prop-open-loop-admissible-controls-characterization} shows that any admissible control admits a feedback representation.
\end{remark}

\begin{remark}\label{rmk-relation}\rm
Let $\mathbf{\Sigma}\in \mathcal{H}[A,C;B,D]_{\alpha}$ be an arbitrary given stabilizer. Then Proposition \ref{prop-open-loop-admissible-controls-characterization}
helps us to establish the equivalence between Problem (M-SLQ) and auxiliary Problem (M-SLQ-$\Sigma$) in the following sense:
\begin{enumerate}
\item[(i)] Problem (M-SLQ-$\Sigma$) is (uniquely) open-loop solvable at initial value $(x,i)\in\mathbb{R}^{n}\times \mathcal{S}$ if and only if Problem (M-SLQ) is. Further, $\nu^{*}(\cdot)\in L_{\mathbb{F}}^{2}(\mathbb{R}^{m})$ is an open-loop optimal control of Problem (M-SLQ-$\Sigma$) at $(x,i)$ if and only if $u^{*}(\cdot)\triangleq\Sigma(\alpha(\cdot))X_{\Sigma}(\cdot;x,i,\nu^{*})+\nu^{*}(\cdot)$ is an open-loop optimal control of Problem (M-SLQ) at $(x,i)$.
\item[(ii)] Problem (M-SLQ-$\Sigma$) is (uniquely) closed-loop solvable if and only if Problem (M-SLQ) is. Further, $\left(\mathbf{\Sigma^{*}},\nu^{*}(\cdot)\right)\in\mathcal{D}\left(\mathbb{R}^{m\times n}\right)\times L_{\mathbb{F}}^{2}(\mathbb{R}^{m})$ is a closed-loop optimal  strategy of Problem (M-SLQ-$\Sigma$) if and only if $\left(\mathbf{\Sigma^{*}}+\mathbf{\Sigma},\nu^{*}(\cdot)\right)$ is a closed-loop optimal  strategy of Problem (M-SLQ).
\end{enumerate}
\end{remark}
Combining the Definition \ref{def-open-closed}, Proposition \ref{prop-closed-loop-admissible-controls-characterization}, and Proposition \ref{prop-open-loop-admissible-controls-characterization}, one can quickly obtain the following result. 

\begin{corollary}\label{coro-closed-loop control}
A pair $(\widehat{\mathbf{\Theta}},\widehat{\nu}(\cdot))\in\mathcal{D}\left(\mathbb{R}^{m\times n}\right)\times L_{\mathbb{F}}^{2}(\mathbb{R}^m)$ is a closed-loop optimal  strategy if and only if the following holds:
\begin{enumerate}
\item[(i)] $\widehat{\mathbf{\Theta}}\in\mathcal{H}[A,C;B,D]_{\alpha}$,
\item[(ii)] $\widehat{\nu}(\cdot)$ is an open-loop optimal control of Problem (M-SLQ-$\widehat{\Theta}$) for any 
$(x,i)\in\mathbb{R}^n\times\mathcal{S}$.
\end{enumerate}
\end{corollary}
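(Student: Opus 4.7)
The plan is to derive the corollary as an essentially immediate consequence of the two structural results that precede it, namely Proposition \ref{prop-closed-loop-admissible-controls-characterization} and Proposition \ref{prop-open-loop-admissible-controls-characterization}, together with the identity
\[
J_{\Theta}(x,i;\nu(\cdot)) = J\bigl(x,i;\Theta(\alpha(\cdot))X_{\Theta}(\cdot;x,i,\nu)+\nu(\cdot)\bigr),
\]
which was recorded right after the definition of Problem (M-SLQ-$\mathbf{\Theta}$). The strategy is to use this identity to transport optimality between Problem (M-SLQ) and Problem (M-SLQ-$\widehat{\Theta}$) once $\widehat{\mathbf{\Theta}}$ is known to stabilize the system.

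For the ``only if'' direction, suppose $(\widehat{\mathbf{\Theta}},\widehat{\nu}(\cdot))$ is a closed-loop optimal strategy. Then by definition $(\widehat{\mathbf{\Theta}},\widehat{\nu}(\cdot))\in\mathcal{V}_{cl}[0,\infty)$, so Proposition \ref{prop-closed-loop-admissible-controls-characterization} immediately yields $\widehat{\mathbf{\Theta}}\in\mathcal{H}[A,C;B,D]_{\alpha}$, which is (i). For (ii), fix $(x,i)\in\mathbb{R}^n\times\mathcal{S}$ and an arbitrary $\nu(\cdot)\in L_{\mathbb{F}}^{2}(\mathbb{R}^m)=\mathcal{U}_{\widehat{\Theta}}(x,i)$ (the last equality holds because $\widehat{\mathbf{\Theta}}$ stabilizes, cf.\ Remark \ref{rmk-def}(ii)). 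The outcome $u(\cdot)\triangleq\widehat{\Theta}(\alpha(\cdot))X_{\widehat{\Theta}}(\cdot;x,i,\nu)+\nu(\cdot)$ lies in $\mathcal{U}_{ad}(x,i)$ by Proposition \ref{prop-open-loop-admissible-controls-characterization}. Applying the identity above twice and the defining inequality \eqref{closed-loop-optimal-control} of a closed-loop optimal strategy yields
\[
J_{\widehat{\Theta}}(x,i;\widehat{\nu}(\cdot))=J(x,i;u(\cdot;x,i,\widehat{\Theta},\widehat{\nu}))\leq J(x,i;u(\cdot))=J_{\widehat{\Theta}}(x,i;\nu(\cdot)),
\]
so $\widehat{\nu}(\cdot)$ is open-loop optimal for Problem (M-SLQ-$\widehat{\Theta}$) at $(x,i)$.

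For the ``if'' direction, assume (i) and (ii). From (i) and Proposition \ref{prop-closed-loop-admissible-controls-characterization} we have $(\widehat{\mathbf{\Theta}},\widehat{\nu}(\cdot))\in\mathcal{V}_{cl}[0,\infty)$. Fix $(x,i)$ and any $u(\cdot)\in\mathcal{U}_{ad}(x,i)$. By Proposition \ref{prop-open-loop-admissible-controls-characterization} applied with $\mathbf{\Theta}=\widehat{\mathbf{\Theta}}$, there exists $\nu(\cdot)\in L_{\mathbb{F}}^{2}(\mathbb{R}^m)$ such that $u(\cdot)=\widehat{\Theta}(\alpha(\cdot))X_{\widehat{\Theta}}(\cdot;x,i,\nu)+\nu(\cdot)$. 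Invoking the identity and (ii), we get
\[
J(x,i;u(\cdot;x,i,\widehat{\Theta},\widehat{\nu}))=J_{\widehat{\Theta}}(x,i;\widehat{\nu}(\cdot))\leq J_{\widehat{\Theta}}(x,i;\nu(\cdot))=J(x,i;u(\cdot)),
\]
establishing \eqref{closed-loop-optimal-control} and hence closed-loop optimality.

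There is no real obstacle here: the corollary is a bookkeeping consequence of the parameterization of admissible controls by stabilizers and the $J_{\Theta}$--$J$ identity. The only point worth double-checking during writeup is that the bijection $\nu(\cdot)\mapsto \widehat{\Theta}(\alpha(\cdot))X_{\widehat{\Theta}}(\cdot;x,i,\nu)+\nu(\cdot)$ between $L_{\mathbb{F}}^{2}(\mathbb{R}^m)$ and $\mathcal{U}_{ad}(x,i)$ is used in both directions, so that the infima over $\mathcal{U}_{\widehat{\Theta}}(x,i)$ and over $\mathcal{U}_{ad}(x,i)$ genuinely coincide, which is exactly what Proposition \ref{prop-open-loop-admissible-controls-characterization} guarantees.
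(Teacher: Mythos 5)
Your proof is correct and follows exactly the route the paper indicates (the paper gives no written proof, merely noting that the corollary follows by combining Definition \ref{def-open-closed} with Propositions \ref{prop-closed-loop-admissible-controls-characterization} and \ref{prop-open-loop-admissible-controls-characterization}); your argument fills in precisely that bookkeeping via the identity $J_{\Theta}(x,i;\nu(\cdot))=J(x,i;\Theta(\alpha(\cdot))X_{\Theta}(\cdot;x,i,\nu)+\nu(\cdot))$ and the parameterization of $\mathcal{U}_{ad}(x,i)$ by $\nu(\cdot)\in L_{\mathbb{F}}^{2}(\mathbb{R}^{m})$. No gaps.
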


\section{Open-loop Solvability of Problem (M-SLQ)}\label{section-open-loop}
According to Theorem \ref{thm-admissible-controls-nonempty-charateristic}, the stabilizability of the system $[A,C;B,D]_{\alpha}$ is a necessary condition to ensure that Problem (M-SLQ) is well defined. 
Therefore, we will work with the following assumption in the rest of the paper.

\textbf{(H1)} System $[A,C;B,D]_{\alpha}$ is L$^{2}$-stabilizable, i.e., $\mathcal{H}[A,C;B,D]_{\alpha}\neq \emptyset$.

The following theorem provides an equivalent condition to verify that the Problem (M-SLQ) is open-loop solvable.  
\begin{theorem}\label{thm-open-LQ}
Let assumption (H1) hold. An element $u^{*}(\cdot)\in\mathcal{U}_{ad}(x,i)$ is an open-loop optimal control of Problem (M-SLQ) for the initial state $(x,i)\in\mathbb{R}^{n}\times\mathcal{S}$ if and only if the following holds:
\begin{enumerate}
  \item[(i)] Problem (M-SLQ) is  convex;
  \item[(ii)] The adapted solution $\left(X^{*}(\cdot),Y^{*}(\cdot),Z^{*}(\cdot),\mathbf{\Gamma}^{*}(\cdot)\right)\in L_{\mathbb{F}}^{2}(\mathbb{R}^{n})\times L_{\mathbb{F}}^{2}(\mathbb{R}^{n})\times L_{\mathbb{F}}^{2}(\mathbb{R}^{n})\times\mathcal{D}\left(L_{\mathcal{P}}^{2}(\mathbb{R}^{n})\right)$ to the  FBSDE
  \begin{equation}\label{FBSDE-LQ}
  \left\{
      \begin{aligned}
      dX^{*}(t)& = \left[A(\alpha_{t})X^{*}(t) + B(\alpha_{t})u^{*}(t) + b(t)\right]dt
   + \left[C(\alpha_{t})X^{*}(t) + D(\alpha_{t})u^{*}(t) + \sigma(t)\right]dW(t),\\
      dY^{*}(t)& = - \left[A(\alpha_{t})^{\top}Y^{*}(t) + C(\alpha_{t})^{\top}Z^{*}(t) + Q(\alpha_{t})X^{*}(t) + S(\alpha_{t})^{\top}u^{*}(t) + q(t)\right]dt\\
      &\quad+ Z^{*}(t)dW(t) + \mathbf{\Gamma}^{*}(t)\cdot d\mathbf{\widetilde{N}}(t),\quad t\geq 0,\\
      X^{*}(0)&=x,\quad\alpha_{0}=i,
      \end{aligned}
      \right.
  \end{equation}
  satisfies the following stationary condition:
  \begin{equation}\label{stationary-LQ}
     B(\alpha_{t})^{\top}Y^{*}(t)+ D(\alpha_{t})^{\top}Z^{*}(t)+S(\alpha_{t})X^{*}(t)+R(\alpha_{t})u^{*}(t)+\rho(t)=0,\quad a.e.\quad a.s..
  \end{equation}
  \end{enumerate}
\end{theorem}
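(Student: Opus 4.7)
The plan is to follow the classical variational approach for SLQ problems, adapted to the regime-switching infinite horizon setting. Given any $v(\cdot)\in\mathcal{U}^0_{ad}(0,i)$ and any admissible $u^*(\cdot)\in\mathcal{U}_{ad}(x,i)$, the linearity of the state equation gives $X(\cdot;x,i,u^*+\varepsilon v)=X^*(\cdot)+\varepsilon X^v(\cdot)$ where $X^v(\cdot)\in L^2_{\mathbb{F}}(\mathbb{R}^n)$ solves the homogeneous system with control $v$. Expanding the quadratic cost functional yields
\begin{equation*}
J(x,i;u^*(\cdot)+\varepsilon v(\cdot))=J(x,i;u^*(\cdot))+2\varepsilon\,\mathcal{L}(v(\cdot))+\varepsilon^{2}J^{0}(0,i;v(\cdot)),
\end{equation*}
where $\mathcal{L}(v(\cdot))$ is a linear functional in $v$ built from $X^v$, $v$, and the optimal trajectory $(X^*,u^*)$ together with the inhomogeneous terms $q,\rho$. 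This expansion is the backbone of both directions.

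For the necessity direction, optimality of $u^*(\cdot)$ forces the coefficient of $\varepsilon^{2}$ to be non-negative and the coefficient of $\varepsilon$ to vanish for all admissible variations $v$. The former gives convexity (using Remark \ref{rmk-SLQ-0-admissible} to parametrize $\mathcal{U}^0_{ad}(0,i)$ via a stabilizer). For the latter, I would introduce an adjoint triple $(Y^*,Z^*,\mathbf{\Gamma}^*)$ defined by the BSDE in \eqref{FBSDE-LQ} and apply It\^o's rule to $\langle Y^*(t),X^v(t)\rangle$; the martingale terms $Z^*\,dW$ and $\mathbf{\Gamma}^*\cdot d\mathbf{\widetilde{N}}$ vanish in expectation since $X^v$ is continuous and the processes are $L^2$-integrable, while the boundary term at infinity vanishes by the $L^2$-integrability of $X^v$ and $Y^*$. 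The resulting identity reduces $\mathcal{L}(v(\cdot))$ to $\mathbb{E}\int_{0}^{\infty}\bigl\langle B(\alpha_t)^{\top}Y^*+D(\alpha_t)^{\top}Z^*+S(\alpha_t)X^*+R(\alpha_t)u^*+\rho(t),\,v(t)\bigr\rangle dt$. Since $v$ ranges over a dense subset of $L^2_{\mathbb{F}}(\mathbb{R}^m)$ after reparametrization by Proposition \ref{prop-open-loop-admissible-controls-characterization}, the stationarity condition \eqref{stationary-LQ} follows.

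For sufficiency, given (i) and (ii), I take any $u(\cdot)\in\mathcal{U}_{ad}(x,i)$, set $v(\cdot)=u(\cdot)-u^*(\cdot)$ (admissible in $\mathcal{U}^0_{ad}(0,i)$ by the argument in Theorem \ref{thm-admissible-controls-nonempty-charateristic}), and apply the expansion with $\varepsilon=1$. The cross term $\mathcal{L}(v(\cdot))$ equals zero by the same It\^o computation combined with \eqref{stationary-LQ}, while the quadratic term is non-negative by convexity, yielding $J(x,i;u(\cdot))\geq J(x,i;u^*(\cdot))$.

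The main obstacle is the existence of the adapted solution $(X^*,Y^*,Z^*,\mathbf{\Gamma}^*)\in L^2_{\mathbb{F}}(\mathbb{R}^n)\times L^2_{\mathbb{F}}(\mathbb{R}^n)\times L^2_{\mathbb{F}}(\mathbb{R}^n)\times\mathcal{D}(L^2_{\mathcal{P}}(\mathbb{R}^n))$ to \eqref{FBSDE-LQ}, because the BSDE driver $Q(\alpha_t)X^*(t)+S(\alpha_t)^{\top}u^*(t)+q(t)$ does not fit the structural condition of Proposition \ref{prop-BSDE} under assumption (H1) alone — the forward coefficients need not be stable. To overcome this, I would fix a stabilizer $\mathbf{\Sigma}\in\mathcal{H}[A,C;B,D]_{\alpha}$, pass to the auxiliary Problem (M-SLQ-$\Sigma$) via Remark \ref{rmk-relation}, and solve the transformed FBSDE whose forward drift and diffusion now involve $[A_{\Sigma},C_{\Sigma}]_{\alpha}$, which is $L^2$-stable by construction. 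Then Proposition \ref{prop-FSDE} solves the forward equation, Proposition \ref{prop-BSDE} solves the decoupled backward equation in the well-posed setting, and a straightforward verification shows the transformed solution recovers a genuine solution of \eqref{FBSDE-LQ}. The handling of the jump martingale term $\mathbf{\Gamma}^*\cdot d\mathbf{\widetilde{N}}$ in the It\^o computation is purely bookkeeping since the forward state $X^v$ is continuous, so cross-variation with $\widetilde{N}_j$ vanishes.
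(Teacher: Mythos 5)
Your proposal follows essentially the same route as the paper: a variational expansion of the cost, duality via It\^o's formula applied to $\langle Y^{*},X^{v}\rangle$, and --- crucially --- passage to the auxiliary Problem (M-SLQ-$\Sigma$) with a stabilizer $\mathbf{\Sigma}\in\mathcal{H}[A,C;B,D]_{\alpha}$ so that Propositions \ref{prop-FSDE} and \ref{prop-BSDE} yield the $L^{2}$-stable adapted solution of the FBSDE, followed by the verification (using the stationarity condition to cancel the extra $\Sigma^{\top}[\cdots]$ term in the backward drift) that this solution also solves \eqref{FBSDE-LQ}. One caution: your claim that $v$ ranges over a dense subset of $L_{\mathbb{F}}^{2}(\mathbb{R}^{m})$ is not justified, since $\mathcal{U}_{ad}^{0}(0,i)$ is in general a proper subspace, but this is harmless because the correct (and the paper's) way to extract stationarity is to perform the variation inside Problem (M-SLQ-$\Sigma$), where the admissible perturbations $\nu$ do exhaust $L_{\mathbb{F}}^{2}(\mathbb{R}^{m})$ and where the identity $S_{\Sigma}(\alpha)X_{\Sigma}^{*}+R(\alpha)\nu^{*}=S(\alpha)X^{*}+R(\alpha)u^{*}$ converts the $\Sigma$-stationarity back into \eqref{stationary-LQ}.
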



\begin{proof}
Let $\mathbf{\Sigma}\in\mathcal{H}[A,C;B,D]_{\alpha}$ and $u^{*}(\cdot)\in L_{\mathbb{F}}^{2}(\mathbb{R}^{m})$ is an open-loop optimal control of Problem (M-SLQ) at $(x,i)$. Then, by Remark \ref{rmk-relation}, we must have that $\nu^{*}(\cdot)\triangleq u^{*}(\cdot)-\Sigma(\alpha(\cdot))X(\cdot;x,i,u^{*})$ is an open-loop optimal control of Problem (M-SLQ-$\Sigma$) at $(x,i)$. Noting that the system $[A_{\Sigma},C_{\Sigma}]_{\alpha}$ is L$^{2}$-stable. By Proposition \ref{prop-FSDE}, we know that the admissible control set of Problem (M-SLQ-$\Sigma$) is $L_{\mathbb{F}}^{2}(\mathbb{R}^{m})$ for any initial value $(x,i)$. In addition,
by Propositions  \ref{prop-FSDE} and \ref{prop-BSDE}, one can easily obtain that the following decoupled FBSDE admits a unique $L^{2}$-stable adapted solution:
\begin{equation}\label{FBSDE-LQ-Sigma}
  \hspace{-0.43cm}
  \left\{
      \begin{aligned}
      dX_{\Sigma}^{*}(t)&=\left[A_{\Sigma}(\alpha_{t})X_{\Sigma}^{*}(t)+B(\alpha_{t})\nu^{*}(t)+b(t)\right]dt+\left[C_{\Sigma}(\alpha_{t})X_{\Sigma}^{*}(t)+D(\alpha_{t})\nu^{*}(t)+\sigma(t)\right]dW(t),\\
      dY_{\Sigma}^{*}(t)&=-\left[A_{\Sigma}(\alpha_{t})^{\top}Y_{\Sigma}^{*}(t)+C_{\Sigma}(\alpha_{t})^{\top}Z_{\Sigma}^{*}(t)+Q_{\Sigma}(\alpha_{t})X_{\Sigma}^{*}(t)+S_{\Sigma}(\alpha_{t})^{\top}\nu^{*}(t)+q_{\Sigma}(t)\right]dt\\
      &\quad+Z_{\Sigma}^{*}(t)dW(t)+\mathbf{\Gamma_{\Sigma}}^{*}(t)\cdot d\mathbf{\widetilde{N}}(t),\quad t\geq 0,\\
      X^{*}(0)&=x,\quad\alpha_{0}=i,
      \end{aligned}
      \right.
  \end{equation}
Further, using the classical variational method, for any $\lambda\in\mathbb{R}$ and $\nu(\cdot)\in L_{\mathbb{F}}^{2}(\mathbb{R}^{m})$, we can verify that  the following holds:
\begin{equation}\label{eq-p1}
    \begin{aligned}
    0&\leq J_{\Sigma}(x,i;\nu^{*}(\cdot)+\lambda\nu(\cdot))-J_{\Sigma}(x,i;\nu^{*}(\cdot))\\
    &=\lambda^{2}J_{\Sigma}^{0}(0,i;\nu(\cdot))
    +2\lambda\mathbb{E}\int_{0}^{\infty}\big<B(\alpha)^{\top}Y_{\Sigma}^{*}+ D(\alpha)^{\top}Z_{\Sigma}^{*}+S_{\Sigma}(\alpha)X_{\Sigma}^{*}+R(\alpha)\nu^{*}+\rho,\nu\big>dt,
    \end{aligned}
\end{equation}
which is equivalent to the following:
\begin{enumerate}
  \item[(i)] The Problem (M-SLQ-$\Sigma$) is convex, that is, $J_{\Sigma}^{0}(0,i;\nu(\cdot))\geq 0$ for any $(i,\nu(\cdot))\in\mathcal{S}\times L_{\mathbb{F}}^{2}(\mathbb{R}^{m})$;
  \item[(ii)] the solution $\left(X_{\Sigma}^{*}(\cdot),Y_{\Sigma}^{*}(\cdot),Z_{\Sigma}^{*}(\cdot),\mathbf{\Gamma_{\Sigma}}^{*}(\cdot)\right)\in L_{\mathbb{F}}^{2}(\mathbb{R}^{n})\times L_{\mathbb{F}}^{2}(\mathbb{R}^{n})\times L_{\mathbb{F}}^{2}(\mathbb{R}^{n})\times\mathcal{D}\left(L_{\mathcal{P}}^{2}(\mathbb{R}^{n})\right)$ of FBSDE \eqref{FBSDE-LQ-Sigma} such that
  \begin{equation}\label{stationary-LQ-Sigma}
     B(\alpha_{t})^{\top}Y_{\Sigma}^{*}(t)+ D(\alpha_{t})^{\top}Z_{\Sigma}^{*}(t)+S_{\Sigma}(\alpha_{t})X_{\Sigma}^{*}(t)+R(\alpha_{t})\nu^{*}(t)+\rho(t)=0,\quad a.e.\quad a.s..
  \end{equation}
  \end{enumerate}
Note that for any $(i,\nu(\cdot))\in\mathcal{S}\times L_{\mathbb{F}}^{2}(\mathbb{R}^{m})$,
\begin{equation}
\begin{aligned}
    J_{\Sigma}^{0}(0,i;\nu(\cdot))&= \mathbb{E}\int_{0}^{\infty}
    \left<
    \left(
  \begin{matrix}
    Q_{\Sigma}(\alpha_{t}) & S_{\Sigma}(\alpha_{t})^{\top} \\
    S_{\Sigma}(\alpha_{t}) & R(\alpha_{t})
 \end{matrix}
    \right)
    \left(
\begin{matrix}
    X_{\Sigma}^{0}(t) \\
    \nu(t) 
\end{matrix}
    \right),
    \left(
  \begin{matrix}
     X_{\Sigma}^{0}(t) \\
    \nu(t) 
\end{matrix}
    \right)
    \right>dt\\
    &= \mathbb{E}\int_{0}^{\infty}
    \left<
    \left(
  \begin{matrix}
    Q(\alpha_{t}) & S(\alpha_{t})^{\top} \\
    S(\alpha_{t}) & R(\alpha_{t})
 \end{matrix}
    \right)
    \left(
\begin{matrix}
    X_{\Sigma}^{0}(t) \\
    \Sigma(\alpha_{t})X_{\Sigma}^{0}(t)+\nu(t) 
\end{matrix}
    \right),
    \left(
  \begin{matrix}
     X_{\Sigma}^{0}(t) \\
    \Sigma(\alpha_{t})X_{\Sigma}^{0}(t)+\nu(t)
\end{matrix}
    \right)
    \right>dt\\
    &\geq 0.
\end{aligned}
\end{equation}
By Remark \ref{rmk-SLQ-0-admissible}, it is equivalent to that the Problem (M-SLQ) is convex.

On the other hand, we have
$X_{\Sigma}^{*}(\cdot)=X_{\Sigma}(\cdot;x,i,\nu^{*})=X(\cdot;x,i,u^{*})=X^{*}(\cdot),$
and
\begin{align*}
&\quad A_{\Sigma}(\alpha_{t})^{\top}Y_{\Sigma}^{*}(t)+C_{\Sigma}(\alpha_{t})^{\top}Z_{\Sigma}^{*}(t)+Q_{\Sigma}(\alpha_{t})X_{\Sigma}^{*}(t)+S_{\Sigma}(\alpha_{t})^{\top}\nu^{*}(t)+q_{\Sigma}(t)\\
&=A(\alpha_{t})^{\top}Y_{\Sigma}^{*}(t)+C(\alpha_{t})^{\top}Z_{\Sigma}^{*}(t)+Q(\alpha_{t})X_{\Sigma}^{*}(t)+S(\alpha_{t})^{\top}\nu^{*}(t)+q(t)\\
&\quad +\Sigma(\alpha_{t})^{\top}\big[B(\alpha_{t})^{\top}Y_{\Sigma}^{*}(t)+ D(\alpha_{t})^{\top}Z_{\Sigma}^{*}(t)+S(\alpha_{t})_{\Sigma}X_{\Sigma}^{*}(t)+R(\alpha_{t})\nu^{*}(t)+\rho(t)\big]\\
&=A(\alpha_{t})^{\top}Y_{\Sigma}^{*}(t)+C(\alpha_{t})^{\top}Z_{\Sigma}^{*}(t)+Q(\alpha_{t})X_{\Sigma}^{*}(t)+S(\alpha_{t})^{\top}\nu^{*}(t)+q(t),\quad t\geq0,\quad a.s..
\end{align*}
Hence, the solution to \eqref{FBSDE-LQ-Sigma} also solves \eqref{FBSDE-LQ}. Thus, we complete the proof.
\end{proof}


\begin{remark}\label{rmk-convex}\rm
The proof of the above theorem shows the convexity equivalence between Problem (M-SLQ) and Problem (M-SLQ-$\Sigma$). Noting Remark \ref{rmk-def} (i), one can further obtain that the Problem (M-SLQ) is uniformly convex if and only if the Problem (M-SLQ-$\Sigma$) is. Such a  finding is completely new compared with reference \cite{Sun-Yong-2018-ISLQI}. 
\end{remark}

 %
From Remark \ref{rmk-relation}, we know that the solvability of the Problem (M-SLQ) under the assumption (H1) is equivalent to an auxiliary problem, whose 
state system is L$^{2}$-stable. Hence, without loss of generality, we assume that system $[A,C]_{\alpha}$ is L$^{2}$-stable in the following.
By Proposition \ref{prop-FSDE}, we have $\mathcal{U}_{ad}(x,i)=L_{\mathbb{F}}^{2}(\mathbb{R}^{m})$ for any $(x,i)\in\mathbb{R}^{n}\times \mathcal{S}$ in this special case. This allows us to rewrite the cost functionals of Problem (M-SLQ) and Problem (M-SLQ)$^0$ from the Hilbert space point of view.
\begin{proposition}\label{prop-Hilbet}
Suppose that the system $\left[A,C\right]_{\alpha}$ is L$^2$-stable. Then for any $i\in\mathcal{S}$, there exist $\widehat{u}_{i}\in L_{\mathbb{F}}^{2}(\mathbb{R}^{m})$, $\widehat{x}_{i}\in\mathbb{R}^{n}$, $\widehat{c}_{i}\in\mathbb{R}$, $\mathbb{M}_{i}\in\mathbb{S}^{n}$, a bounded self-adjoint linear operator $\mathbb{N}_{i}:\text{ }L_{\mathbb{F}}^{2}(\mathbb{R}^{m})\rightarrow L_{\mathbb{F}}^{2}(\mathbb{R}^{m})$, and a bounded linear operator $\mathbb{L}_{i}:\text{ }\mathbb{R}^{n}\rightarrow L_{\mathbb{F}}^{2}(\mathbb{R}^{m})$ 
such that
\begin{equation}\label{cost-Hilbert}
    \begin{aligned}
      &J(x,i;u(\cdot))=\big<\mathbb{N}_{i}u,u\big>+2\big<\mathbb{L}_{i}x,u\big>+\big<\mathbb{M}_{i}x,x\big>+2\big<\widehat{u}_{i},u\big>+2\big<\widehat{x}_{i},x\big>+\widehat{c}_{i},\\
      &J^{0}(x,i;u(\cdot))=\big<\mathbb{N}_{i}u,u\big>+2\big<\mathbb{L}_{i}x,u\big>+\big<\mathbb{M}_{i}x,x\big>,\quad \forall (x,i,u(\cdot))\in \mathbb{R}^{n}\times \mathcal{S}\times L_{\mathbb{F}}^{2}(\mathbb{R}^{m}).
    \end{aligned}
\end{equation}
\end{proposition}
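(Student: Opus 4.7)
The plan is to exploit the linearity of the state equation \eqref{state} in $(x,u)$ and then expand the cost functional \eqref{cost} as a quadratic form on the Hilbert space $\mathbb{R}^{n}\times L^{2}_{\mathbb{F}}(\mathbb{R}^{m})$. More precisely, for each fixed $i\in\mathcal{S}$ I would decompose the state as
\begin{equation*}
X(\cdot;x,i,u)=\Phi_{i}x+\Psi_{i}u+X_{b}(\cdot;i),
\end{equation*}
where $\Phi_{i}x:=X^{0}(\cdot;x,i,0)$, $\Psi_{i}u:=X^{0}(\cdot;0,i,u)$, and $X_{b}(\cdot;i)$ is the solution to \eqref{FSDE} with $x=0$. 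Since $[A,C]_{\alpha}$ is assumed $L^{2}$-stable, Proposition \ref{prop-FSDE} immediately yields that $\Phi_{i}:\mathbb{R}^{n}\rightarrow L^{2}_{\mathbb{F}}(\mathbb{R}^{n})$ and $\Psi_{i}:L^{2}_{\mathbb{F}}(\mathbb{R}^{m})\rightarrow L^{2}_{\mathbb{F}}(\mathbb{R}^{n})$ are bounded linear operators, and $X_{b}(\cdot;i)\in L^{2}_{\mathbb{F}}(\mathbb{R}^{n})$.

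Next I would substitute this decomposition into \eqref{cost} and expand. The quadratic part $\mathbb{E}\int_{0}^{\infty}[\langle Q(\alpha)X,X\rangle+2\langle S(\alpha)X,u\rangle+\langle R(\alpha)u,u\rangle]\,dt$ splits, after expanding $X$, into a sum of quadratic-in-$u$ terms, quadratic-in-$x$ terms, $(x,u)$ cross terms, $(u,X_{b})$ and $(x,X_{b})$ cross terms, and terms involving only $X_{b}$. The inhomogeneous part $2\mathbb{E}\int_{0}^{\infty}[\langle q,X\rangle+\langle\rho,u\rangle]\,dt$ contributes additional linear-in-$u$, linear-in-$x$, and constant pieces. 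Grouping them accordingly, I read off:
\begin{equation*}
\langle \mathbb{N}_{i}u,v\rangle:=\mathbb{E}\!\int_{0}^{\infty}\!\big[\langle Q(\alpha)\Psi_{i}u,\Psi_{i}v\rangle+\langle S(\alpha)\Psi_{i}u,v\rangle+\langle S(\alpha)\Psi_{i}v,u\rangle+\langle R(\alpha)u,v\rangle\big]\,dt,
\end{equation*}
\begin{equation*}
\langle \mathbb{L}_{i}x,u\rangle:=\mathbb{E}\!\int_{0}^{\infty}\!\big[\langle Q(\alpha)\Phi_{i}x,\Psi_{i}u\rangle+\langle S(\alpha)\Phi_{i}x,u\rangle+\langle (\Psi_{i}u)(t),S(\alpha_{t})^{\top}x\rangle_{\mathbb{R}^{n}}\cdot 0\big]\,dt,
\end{equation*}
and similarly $\mathbb{M}_{i}$, $\widehat u_{i}$, $\widehat x_{i}$, $\widehat c_{i}$ by symmetrisation of the corresponding bilinear/linear forms. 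Boundedness of these objects is a direct consequence of Cauchy--Schwarz together with the operator norms of $\Phi_{i},\Psi_{i}$ and the uniform boundedness of $Q(\cdot),S(\cdot),R(\cdot)$ over the finite state space $\mathcal{S}$.

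Self-adjointness of $\mathbb{N}_{i}$ and $\mathbb{M}_{i}$ is built in by construction: the defining bilinear forms above are manifestly symmetric in $(u,v)$ and $(x,y)$ because $Q(i),R(i)\in\mathbb{S}^{n},\mathbb{S}^{m}$ and the two $S$-terms have been symmetrised. The second identity in \eqref{cost-Hilbert} is then the special case $b=\sigma=q=\rho=0$, for which $X_{b}\equiv 0$ and the linear and constant parts vanish.

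The only mildly delicate step is verifying that the various bilinear forms actually define \emph{bounded} operators on the relevant Hilbert spaces; here the cornerstone is the priori estimate \eqref{FSDE-E} of Proposition \ref{prop-FSDE}, which bounds $\|\Psi_{i}u\|_{L^{2}_{\mathbb{F}}}$ by a constant multiple of $\|u\|_{L^{2}_{\mathbb{F}}}$ and $\|\Phi_{i}x\|_{L^{2}_{\mathbb{F}}}$ by $|x|$. Once this is in hand, the Riesz representation theorem converts each bounded symmetric bilinear form into the desired self-adjoint operator, and each bounded linear functional into the vectors $\widehat u_{i}$ and $\widehat x_{i}$. Collecting the remaining pieces depending only on $X_{b}$, $q$, $\sigma$, $\rho$ gives the scalar $\widehat c_{i}$, completing the proof.
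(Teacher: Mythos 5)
Your argument is correct and is exactly the standard quadratic-functional representation that the paper itself omits (it simply defers to the finite-horizon case in the cited reference): decompose $X(\cdot;x,i,u)=\Phi_i x+\Psi_i u+X_b$, expand the cost, and use the a priori estimate \eqref{FSDE-E} of Proposition \ref{prop-FSDE} to get boundedness of $\Phi_i,\Psi_i$ and hence of all the resulting operators and functionals. The only blemish is the spurious ``$\cdot\,0$'' term in your displayed formula for $\big<\mathbb{L}_i x,u\big>$, which is harmless; otherwise the identification of $\mathbb{N}_i$, $\mathbb{L}_i$, $\mathbb{M}_i$, $\widehat u_i$, $\widehat x_i$, $\widehat c_i$ and the self-adjointness/symmetry claims are all as intended.
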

\begin{proof}
The proof is similar to the finite horizon case in \cite{zhang2021open}. We omit it.
\end{proof}

From the representation \eqref{cost-Hilbert}, we can quickly obtain the following proposition.
\begin{proposition}\label{prop-Hilbert-results}
Suppose that the system $\left[A,C\right]_{\alpha}$ is L$^2$-stable. Then we have: 
\begin{enumerate}
\item[(i)] Problem (M-SLQ) is open-loop solvable at initial value $(x,i)$ if and only if $\mathbb{N}_{i}\geq 0$ and $\mathbb{L}_{i}x+\widehat{u}_i\in\mathcal{R}(\mathbb{N}_{i})$. Further,  
an element $u^{*}(\cdot)\in L_{\mathbb{F}}^{2}(\mathbb{R}^{m})$ is an open-loop optimal control of Problem (M-SLQ) if and only if $\mathbb{N}_{i}u^{*}+\mathbb{L}_{i}x+\widehat{u}_i=0$;
\item[(ii)] If Problem (M-SLQ) is open-loop solvable, then so is Problem (M-SLQ)$^{0}$;
\item[(iii)] If Problem (M-SLQ)$^{0}$ is open-loop solvable, then for any $i\in\mathcal{S}$, there exists $\mathbf{U}_{i}^{*}(\cdot)\in L_{\mathbb{F}}^{2}(\mathbb{R}^{m\times n})$ such that 
$\mathbf{U}_{i}^{*}(\cdot)x$ is an open-loop optimal control for initial state $(x,i)$.
\end{enumerate}
\end{proposition}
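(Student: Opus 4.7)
The plan is to treat the representation \eqref{cost-Hilbert} as a standard inhomogeneous quadratic functional on the Hilbert space $L_{\mathbb{F}}^{2}(\mathbb{R}^{m})$, so that all three assertions reduce to classical first/second-order variational analysis with essentially no further stochastic work beyond Proposition \ref{prop-Hilbet}.

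For assertion (i), I would start from the identity
\begin{equation*}
J(x,i;u+\lambda v)-J(x,i;u) = \lambda^{2}\langle\mathbb{N}_{i}v,v\rangle + 2\lambda\langle\mathbb{N}_{i}u+\mathbb{L}_{i}x+\widehat{u}_{i},v\rangle,
\end{equation*}
valid for any $u,v\in L_{\mathbb{F}}^{2}(\mathbb{R}^{m})$ and $\lambda\in\mathbb{R}$, which follows by direct expansion of \eqref{cost-Hilbert} together with the self-adjointness of $\mathbb{N}_{i}$. Taking $u=u^{*}$ to be optimal and letting $|\lambda|\to\infty$ forces $\langle\mathbb{N}_{i}v,v\rangle\ge 0$ for every $v$, hence $\mathbb{N}_{i}\ge 0$; varying $\lambda$ near zero and then letting $v$ range forces the linear coefficient to vanish, producing the stationary equation $\mathbb{N}_{i}u^{*}+\mathbb{L}_{i}x+\widehat{u}_{i}=0$ and in particular $\mathbb{L}_{i}x+\widehat{u}_{i}\in\mathcal{R}(\mathbb{N}_{i})$. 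For the converse direction, when $\mathbb{N}_{i}\ge 0$ and $u^{*}$ solves the stationary equation, optimality follows at once from $J(x,i;u^{*}+v)-J(x,i;u^{*})=\langle\mathbb{N}_{i}v,v\rangle\ge 0$.

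For (ii), my first observation is that the affine terms $\widehat{u}_{i},\widehat{x}_{i},\widehat{c}_{i}$ in \eqref{cost-Hilbert} originate solely from the inhomogeneous data $b,\sigma,q,\rho$ and therefore vanish for Problem (M-SLQ)$^{0}$, whereas $\mathbb{N}_{i},\mathbb{L}_{i},\mathbb{M}_{i}$ are identical in both problems. Open-loop solvability of (M-SLQ) at every $(x,i)$ then gives, via (i), $\mathbb{N}_{i}\ge 0$ and $\mathbb{L}_{i}x+\widehat{u}_{i}\in\mathcal{R}(\mathbb{N}_{i})$ for all $x\in\mathbb{R}^{n}$. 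Setting $x=0$ extracts $\widehat{u}_{i}\in\mathcal{R}(\mathbb{N}_{i})$, and subtraction yields $\mathbb{L}_{i}x\in\mathcal{R}(\mathbb{N}_{i})$ for every $x$; applying (i) to Problem (M-SLQ)$^{0}$ delivers an open-loop optimal control at each initial state.

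For (iii), I would run a straightforward basis argument: for each $i\in\mathcal{S}$ and each $k=1,\ldots,n$, part (i) furnishes some $u^{*}_{i,k}(\cdot)\in L_{\mathbb{F}}^{2}(\mathbb{R}^{m})$ with $\mathbb{N}_{i}u^{*}_{i,k}+\mathbb{L}_{i}e_{k}=0$. Stacking them into $\mathbf{U}^{*}_{i}(\cdot)\triangleq[u^{*}_{i,1}(\cdot),\ldots,u^{*}_{i,n}(\cdot)]\in L_{\mathbb{F}}^{2}(\mathbb{R}^{m\times n})$ and exploiting the linearity of $\mathbb{N}_{i}$ and $\mathbb{L}_{i}$, I obtain $\mathbb{N}_{i}(\mathbf{U}^{*}_{i}x)+\mathbb{L}_{i}x=\sum_{k}x_{k}(\mathbb{N}_{i}u^{*}_{i,k}+\mathbb{L}_{i}e_{k})=0$ for every $x\in\mathbb{R}^{n}$, so (i) certifies $\mathbf{U}^{*}_{i}(\cdot)x$ as an open-loop optimal control at $(x,i)$. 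The only real subtlety, and the main (rather modest) obstacle, is to distinguish the range $\mathcal{R}(\mathbb{N}_{i})$ (the sharp condition for a minimizer to exist) from the larger $\mathcal{R}(\mathbb{N}_{i}^{1/2})$ (which only ensures boundedness from below); the stationary equation supplied by (i) automatically produces the stronger condition, so no further work is required.
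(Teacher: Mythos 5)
Your proposal is correct and follows essentially the same route as the paper: the quadratic expansion of \eqref{cost-Hilbert} in $\lambda$ for (i), the $x=0$ specialization to extract $\widehat{u}_i\in\mathcal{R}(\mathbb{N}_i)$ for (ii), and the basis-stacking construction of $\mathbf{U}_i^{*}(\cdot)$ via linearity of $\mathbb{N}_i$ and $\mathbb{L}_i$ for (iii). The only difference is your closing remark distinguishing $\mathcal{R}(\mathbb{N}_i)$ from $\mathcal{R}(\mathbb{N}_i^{1/2})$, which is a correct but inessential clarification the paper does not make explicit.
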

\begin{proof}
$\mathbf{(i)}$ Clearly, an element $u^{*}(\cdot)\in L_{\mathbb{F}}^{2}(\mathbb{R}^{m})$ is an open-loop optimal control of Problem (M-SLQ) at initial state $(x,i)$ if and only if
\begin{equation}\label{Hilbert-results-p1}
  J(x,i;u^{*}(\cdot)+\lambda\nu(\cdot)) -J(x,i;u^{*}(\cdot))\geq 0, \quad \forall \nu(\cdot)\in L_{\mathbb{F}}^{2}(\mathbb{R}^{m}),\quad \forall \lambda\in\mathbb{R}.
\end{equation}
Recall the representation \eqref{cost-Hilbert}, we have
\begin{align*}
    &\quad J(x,i;u^{*}(\cdot)+\lambda\nu(\cdot))\\
    &=\big<\mathbb{N}_{i}(u^{*}+\lambda\nu),u^{*}+\lambda\nu\big>+2\big<\mathbb{L}_{i}x,u^{*}+\lambda\nu\big>+\big<\mathbb{M}_{i}x,x\big>+2\big<\widehat{u}_{i},u^{*}+\lambda\nu\big>+2\big<\widehat{x}_{i},x\big>+\widehat{c}_{i}\\
    &=\big<\mathbb{N}_{i}u^{*},u^{*}\big>+2\lambda\big<\mathbb{N}_{i}u^*,\nu\big>+\lambda^{2}\big<\mathbb{N}_{i}\nu,\nu\big>+2\big<\mathbb{L}_{i}x,u^{*}\big>+2\lambda\big<\mathbb{L}_{i}x,\nu\big>+\big<\mathbb{M}_{i}x,x\big>+2\big<\widehat{u}_{i},u^{*}\big>\\
    &\quad+2\lambda\big<\widehat{u}_{i},\nu\big>+2\big<\widehat{x}_{i},x\big>+\widehat{c}_{i}\\
    &=J(x,i;u^{*}(\cdot))+\lambda^{2}\big<\mathbb{N}_{i}\nu,\nu\big>+2\lambda \big<\mathbb{N}_{i}u^*+\mathbb{L}_{i}x+\widehat{u}_{i},\nu\big>.
\end{align*}
Hence,  \eqref{Hilbert-results-p1} is equivalent to
$$\lambda^{2}\big<\mathbb{N}_{i}\nu,\nu\big>+2\lambda \big<\mathbb{N}_{i}u^*+\mathbb{L}_{i}x+\widehat{u}_{i},\nu\big>\geq 0\quad \forall \nu(\cdot)\in L_{\mathbb{F}}^{2}(\mathbb{R}^{m}),\quad \forall \lambda\in\mathbb{R},$$
which is further equivalent to
$$\big<\mathbb{N}_{i}\nu,\nu\big>\geq 0,\quad \forall \nu(\cdot)\in L_{\mathbb{F}}^{2}(\mathbb{R}^{m}), \quad \text{and} \quad \mathbb{N}_{i}u^{*}+\mathbb{L}_{i}x+\widehat{u}_i=0.$$
The desired result follows from the above condition immediately.

$\mathbf{(ii)}$ From the above result, if the Problem (M-SLQ) is open-loop solvable, then we have $\mathbb{N}_{i}\geq 0$ and $\mathbb{L}_{i}x+\widehat{u}_i\in\mathcal{R}(\mathbb{N}_{i})$ for any $(x,i)\in\mathbb{R}^{n}\times\mathcal{S}$. Let $x=0$. Then one can obtain $\widehat{u}_i\in\mathcal{R}(\mathbb{N}_{i})$, which in turn leads to $\mathbb{L}_{i}x\in\mathcal{R}(\mathbb{N}_{i})$ for any $i\in\mathcal{S}$. Applying the above result again, we get the open-loop solvability of Problem (M-SLQ)$^{0}$.

$\mathbf{(iii)}$ Let $\{e_1,\cdots,e_n\}$ be the standard basis for $\mathbb{R}^{n}$ and $u^{*}(\cdot;e_k,i)$ be an open-loop optimal control of Problem (M-SLQ)$^{0}$ for the initial value $(e_k,i)$, $k=1,\cdots,n, i\in\mathcal{S}$. Let
\begin{equation}\label{Hilbert-results-p2}
 \mathbf{U}_{i}^{*}(\cdot)\triangleq\left[u^{*}(\cdot;e_1,i),u^{*}(\cdot;e_2,i),\cdots,u^{*}(\cdot;e_n,i)\right]\in L_{\mathbb{F}}^{2}(\mathbb{R}^{m\times n}),
\end{equation}
and then one can easily verify that $\mathbf{U}_{i}^{*}(\cdot)$ has the desired property.
\end{proof}

\section{The equivalence between open-loop and closed-loop solvability}\label{section-equivalence}
Sun and Yong \cite{Sun-Yong-2018-ISLQI} has proven that the equivalence between the open-loop solvability and the closed-loop solvability for infinite horizon SLQ problem under the diffusion model and Li et al. \cite{Li-Shi-Yong-2021-ID-MFLQ-IF} generalized this result for SLQ problem with mean fields. It is natural to study such equivalence for Problem (M-SLQ). 
In order to give an intuitionistic explanation of the equivalence, we first 
consider Problem (M-SLQ)$^{0}$.

Suppose Problem (M-SLQ)$^{0}$ is open-loop solvable. Then, 
for any $(x,i)\in\mathbb{R}^{n}\times\mathcal{S}$, there exists a $\mathbb{R}^{m\times n}$-valued process $\mathbf{U}^{*}(\cdot;i)$ such that
$u^{*}(\cdot;x,i)=\mathbf{U}^{*}(\cdot;i)x.$
Here, $u^{*}(\cdot;x,i)$ represents an open-loop optimal control for Problem (M-SLQ)$^{0}$ with intial value $(x,i)$.

Now, consider another control problem with initial state $(s,x,i)\in [0,\infty)\times\mathbb{R}^{n}\times\mathcal{S}$. The state equation and cost functional 
are respectively given by
\begin{equation*}\label{state-s}
  \left\{
 \begin{aligned}
   dX(t)&=\left[A\left(\alpha_{t}\right)X(t)+B\left(\alpha_{t}\right)u(t)\right]dt
   +\left[C\left(\alpha_{t}\right)X(t)+D\left(\alpha_{t}\right)u(t)\right]dW(t),\quad t\geq0,\\
   X(s)&=x,\quad \alpha(s)=i,
   \end{aligned}
  \right.
\end{equation*}
and
\begin{equation*}\label{cost-s}
\begin{aligned}
    J\left(s,x,i;u(\cdot)\right)
    & \triangleq \mathbb{E}\int_{s}^{\infty}
    \left<
    \left(
    \begin{array}{ll}
    Q(\alpha_{t}) & S(\alpha_{t})^{\top} \\
    S(\alpha_{t}) & R(\alpha_{t})
    \end{array}
    \right)
    \left(
    \begin{array}{c}
    X(t) \\
    u(t) 
    \end{array}
    \right),
    \left(
    \begin{array}{c}
    X(t) \\
    u(t)
    \end{array}
    \right)
    \right>dt.
  \end{aligned}
\end{equation*}
We denote the above problem as Problem (M-SLQ)$_{s}^{0}$. Since the transition probabilities of the Markov chain are time-invariant and the time horizon is infinite, the Problem (M-SLQ)$^{0}$ and Problem (M-SLQ)$_{s}^{0}$ can essentially be regarded as the same problem. Hence, for any given initial value $(s,x,i)\in [0,\infty)\times\mathbb{R}^{n}\times\mathcal{S}$, let
$$u^{*}(t;s,x,i)=\mathbf{U}^{*}(t-s;i)x, \quad t\geq s.$$
Then $u^{*}(\cdot;s,x,i)$ is an open-loop optimal control for Problem (M-SLQ)$_{s}^{0}$ with initial value $(s,x,i).$

Let us return to Problem (M-SLQ)$^{0}$. For any initial value $(x,i)$ and any fixed $s\geq 0$, by the dynamic programming principle, we obtain
$$u^{*}(s+t;x,i)=u^{*}(s+t;s,X(s;x,i,u^{*}),\alpha_{s})=\mathbf{U}^{*}(t;\alpha_{s})X(s;x,i,u^{*}),\quad \forall t\geq 0.$$
Let $t=0$, one can easily obtain that
$u^{*}(s;x,i)=\mathbf{U}^{*}(0;\alpha_{s})X(s;x,i,u^{*}),\, \forall (s,x,i)\in [0,\infty)\times\mathbb{R}^{n}\times\mathcal{S}.$
Denote
$$\widehat{\mathbf{\Theta}}\triangleq\left[\mathbf{U}^{*}(0;1),\mathbf{U}^{*}(0;2),\cdots,\mathbf{U}^{*}(0;L)\right]\in\mathcal{D}\left(\mathbb{R}^{m\times n}\right).$$
Then the Problem (M-SLQ)$^{0}$ is closed-loop solvable and $(\widehat{\mathbf{\Theta}},0)$ is a closed-loop optimal strategy.
\begin{remark}\label{rmk-explanation}\rm
The main reason for this result is that the generator of the Markov chain is time-invariant, and both coefficients in state process and cost functional are independent of time variate $t$, which leads to no difference between the Problem (M-SLQ)$^{0}$ and Problem (M-SLQ)$_{s}^{0}$ for any $s>0$. We call such a property the stationarity of Problem (M-SLQ)$^0$.
\end{remark}

To simplify further analysises, we set 
\begin{equation}\label{notations-3}
    \begin{array}{l}
    \mathcal{M}(P,i)\triangleq P(i)A(i)+A(i)^{\top}P(i)+C(i)^{\top}P(i)C(i)+Q(i)+\sum_{j=1}^{L}\pi_{ij}P(j),\\
    \mathcal{L}(P,i)\triangleq
    P(i)B(i)+C(i)^{\top}P(i)D(i)+S(i)^{\top},\\
    \mathcal{N}(P,i)\triangleq
    D(i)^{\top}P(i)D(i)+R(i),\quad i\in\mathcal{S},
    \end{array}
\end{equation}
and  consider the following constrained CAREs:
\begin{equation}\label{CAREs-SLQ}
\left\{
    \begin{aligned}
    &\mathcal{M}(P,i)-\mathcal{L}(P,i) \mathcal{N}(P,i)^{\dag} \mathcal{L}(P,i)^{\top} = 0,\\
    & \mathcal{L}(P,i)\left[I-\mathcal{N}(P,i)\mathcal{N}(P,i)^{\dag}\right]=0,\\
    & \mathcal{N}(P,i)\geq 0,\quad\forall i\in\mathcal{S}.
    \end{aligned}
    \right.
\end{equation}

\begin{definition}\label{def-stabilizing-solution-CAREs}
An element $\mathbf{P}\in\mathcal{D}\left(\mathbb{S}^{n}\right)$ is called a static stabilizing solution to \eqref{CAREs-SLQ} if $\mathbf{P}$ solves CAREs \eqref{CAREs-SLQ},
  and there exists $\mathbf{\Pi}\in\mathcal{D}\left(\mathbb{R}^{m\times n}\right)$ such that
\begin{equation}\label{CAREs-ZLQ-stabilizer}
\mathcal{K}(\mathbf{\Pi})=\left(\mathcal{K}\left(\Pi(1)\right),\mathcal{K}\left(\Pi(2)\right),\cdots,\mathcal{K}\left(\Pi(L)\right)\right)\in\mathcal{H}[A,C;B,D]_{\alpha},
\end{equation}
where $
  \mathcal{K}\left(\Pi(i)\right)\triangleq-\mathcal{N}(P,i)^{\dag}\mathcal{L}(P,i)^{\top}+\left[I-\mathcal{N}(P,i)^{\dag}\mathcal{N}(P,i)\right]\Pi(i),\, i\in\mathcal{S}.
$
\end{definition}

The following theorem extends the corresponding result of the system without regime-switching jumps in \cite{Sun-Yong-2018-ISLQI}.
\begin{theorem}\label{thm-main-result}
Let $(H1)$ hold. Then the following statements are equivalent:
\begin{enumerate}
\item[(i)] Problem (M-SLQ) is open-loop solvable;
\item[(ii)] Problem (M-SLQ) is closed-loop solvable;
 \item[(iii)] The constrained CAREs \eqref{CAREs-SLQ} admits a static stabilizing solution $\mathbf{P}\in\mathcal{D}\left(\mathbb{S}^n\right)$ and the BSDE
  \begin{equation}\label{eta}
      \hspace{-0.4cm}\begin{aligned}
        d\eta&=-\big\{\big[A(\alpha)^{\top}-\mathcal{L}(P,\alpha)\mathcal{N}(P,\alpha)^{\dag}B(\alpha)^{\top}\big]\eta+\big[C(\alpha)^{\top}-\mathcal{L}(P,\alpha)\mathcal{N}(P,\alpha)^{\dag}D(\alpha)^{\top}\big]\zeta\\
        &\quad+\big[C(\alpha)^{\top}-\mathcal{L}(P,\alpha)\mathcal{N}(P,\alpha)^{\dag}D(\alpha)^{\top}\big]P(\alpha)\sigma-\mathcal{L}(P,\alpha)\mathcal{N}(P,\alpha)^{\dag}\rho+P(\alpha)b+q\big\}dt\\
        &\quad+\zeta dW(t)+\mathbf{z}\cdot d\mathbf{\widetilde{N}}(t),\quad t\geq 0,
      \end{aligned}
  \end{equation}
   admits a $L^{2}$-stable adapted solution  $\left(\eta(\cdot),\zeta(\cdot),\mathbf{z}(\cdot)\right)$ such that
   \begin{equation}\label{eta-constraint}
    \widetilde{\rho}(t)\triangleq B(\alpha_{t})^{\top}\eta(t)+D(\alpha_{t})^{\top}\zeta(t)+D(\alpha_{t})^{\top}P(\alpha_{t})\sigma(t)+\rho(t)\in\mathcal{R}(\mathcal{N}(P,\alpha_{t})),\, a.e.,\, a.s..
   \end{equation}
  \end{enumerate}
In the above case, any closed-loop optimal strategy $(\widehat{\mathbf{\Theta}},\widehat{\nu}(\cdot))$ admits the following representation:
\begin{equation}\label{SLQ-closed-loop}
  \left\{
  \begin{aligned}
    \widehat{\Theta}(i)&=-\mathcal{N}(P,i)^{\dag}\mathcal{L}(P,i)^{\top}+\left[I-\mathcal{N}(P,i)^{\dag}\mathcal{N}(P,i)\right]\Pi(i),\quad i\in\mathcal{S},\\
    \widehat{\nu}(\cdot)&=-\mathcal{N}(P,\alpha)^{\dag}\widetilde{\rho}(\cdot)
   +\left[I-\mathcal{N}(P,\alpha)^{\dag}\mathcal{N}(P,\alpha)\right]\nu(\cdot),
  \end{aligned}
  \right.
\end{equation}
where $\mathbf{\Pi}=\left(\Pi(1),\Pi(2),\cdots,\Pi(L)\right)\in\mathcal{D}\left(\mathbb{R}^{m\times n}\right)$ is chosen such that $\widehat{\mathbf{\Theta}}\in\mathcal{H}[A,C;B,D]_{\alpha}$ and $\nu(\cdot)\in L_{\mathbb{F}}^{2}(\mathbb{R}^{m})$.
Further, the value function admits the following representation:
\begin{equation}\label{SLQ-value-function}
    \begin{aligned}
       V(x,i)& = \big<P(i)x,x\big> + \mathbb{E} \big\{2\big<\eta(0),x\big> + \int_{0}^{\infty}
      \big[ \big<P(\alpha)\sigma,\sigma\big> + 2\big<\eta,b\big> + 2\big<\zeta,\sigma\big> - \big<\mathcal{N}(P,\alpha)^{\dag}\widetilde{\rho},\widetilde{\rho}\big> ]dt \big\}.
    \end{aligned}
\end{equation}

\end{theorem}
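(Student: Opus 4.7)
The plan is to establish the cycle (ii) $\Rightarrow$ (i) $\Rightarrow$ (iii) $\Rightarrow$ (ii), with the representation formulas and value function falling out of the construction in the last implication. The direction (ii) $\Rightarrow$ (i) is essentially by definition: if $(\widehat{\mathbf{\Theta}},\widehat{\nu}(\cdot))\in\mathcal{V}_{cl}[0,\infty)$ is closed-loop optimal, then by Remark \ref{rmk-def} (iv) its outcome $u(\cdot;x,i,\widehat{\mathbf{\Theta}},\widehat{\nu})$ lies in $\mathcal{U}_{ad}(x,i)$ and realizes the infimum, hence is open-loop optimal.

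For (i) $\Rightarrow$ (iii), I would first pass to Problem (M-SLQ)$^0$ via Proposition \ref{prop-Hilbert-results} (ii)--(iii), which provides, for each $i\in\mathcal{S}$, a process $\mathbf{U}^{*}_i(\cdot)\in L^{2}_{\mathbb{F}}(\mathbb{R}^{m\times n})$ such that $\mathbf{U}^{*}_i(\cdot)x$ is open-loop optimal for the homogeneous problem started from $(x,i)$. Invoking the stationarity explained in Remark \ref{rmk-explanation} (time-invariance of the generator $\Pi$ and of all coefficients), the dynamic programming principle yields $\widehat{\Theta}(i)\triangleq \mathbf{U}^{*}_i(0)\in\mathbb{R}^{m\times n}$, and $\widehat{\mathbf{\Theta}}\in\mathcal{H}[A,C;B,D]_{\alpha}$ because the resulting closed-loop state $X^{0}_{\widehat{\Theta}}(\cdot;x,i,0)$ lies in $L^{2}_{\mathbb{F}}(\mathbb{R}^n)$ for every $(x,i)$. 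Applying Theorem \ref{thm-open-LQ} to the homogeneous problem gives an FBSDE \eqref{FBSDE-LQ} (with $b=\sigma=q=\rho=0$) whose adjoint satisfies $Y^{*}(t)=P(\alpha_t)X^{*}(t)$ for some $\mathbf{P}\in\mathcal{D}(\mathbb{S}^n)$; substituting this ansatz into the BSDE and matching coefficients (using that $x$ is arbitrary and that the Markovian jump part of $dP(\alpha_t)X^{*}(t)$ contributes $\sum_j \pi_{ij}P(j)$) produces the CAREs \eqref{CAREs-SLQ}, while the stationary condition \eqref{stationary-LQ} forces the range condition $\mathcal{L}(P,i)[I-\mathcal{N}(P,i)\mathcal{N}(P,i)^{\dag}]=0$. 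The stabilizer condition \eqref{CAREs-ZLQ-stabilizer} follows from the $L^2$-stability of the closed-loop system under $\widehat{\mathbf{\Theta}}$. For the inhomogeneous part, setting $Y^{*}=P(\alpha)X^{*}+\eta$ and comparing with \eqref{FBSDE-LQ} yields the BSDE \eqref{eta} for $(\eta,\zeta,\mathbf{z})$; its $L^2$-stable solvability is provided by Proposition \ref{prop-BSDE} (after verifying that the drift coefficient comes from a stable system, which is the content of $\widehat{\mathbf{\Theta}}\in\mathcal{H}[A,C;B,D]_\alpha$), and \eqref{eta-constraint} drops out of \eqref{stationary-LQ}.

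For (iii) $\Rightarrow$ (ii), which I expect to be the cleanest part, I would carry out the completion of squares. Given $\mathbf{P}$ solving \eqref{CAREs-SLQ} with $\widehat{\mathbf{\Theta}}$ defined by \eqref{SLQ-closed-loop} and the $L^2$-stable triple $(\eta,\zeta,\mathbf{z})$, I apply It\^{o}'s rule to $\langle P(\alpha_t)X(t),X(t)\rangle + 2\langle \eta(t),X(t)\rangle$ along any admissible pair $(u(\cdot),X(\cdot))$. Using \eqref{CAREs-SLQ} and \eqref{eta} the cross terms collapse into
\begin{equation*}
J(x,i;u(\cdot))=\langle P(i)x,x\rangle+2\langle\eta(0),x\rangle+\mathbb{E}\!\int_{0}^{\infty}\!\!\big[\langle P(\alpha)\sigma,\sigma\rangle+2\langle\eta,b\rangle+2\langle\zeta,\sigma\rangle+\langle\mathcal{N}(P,\alpha)(u-\widehat{\Theta}X-\widehat{\nu}),u-\widehat{\Theta}X-\widehat{\nu}\rangle-\langle\mathcal{N}(P,\alpha)^{\dag}\widetilde{\rho},\widetilde{\rho}\rangle\big]dt,
\end{equation*}
where the range conditions in \eqref{CAREs-SLQ} and \eqref{eta-constraint} are used to make the last two terms well-defined and make $\widehat{\nu}$ independent of the choice of the free part $\nu(\cdot)$ in the representation. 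Since $\mathcal{N}(P,\alpha)\ge 0$, the quadratic term is nonnegative and vanishes precisely at $u=\widehat{\Theta}(\alpha)X+\widehat{\nu}$, which is the outcome of $(\widehat{\mathbf{\Theta}},\widehat{\nu})$. Because $\widehat{\mathbf{\Theta}}\in\mathcal{H}[A,C;B,D]_{\alpha}$ and $\widehat{\nu}\in L^{2}_{\mathbb{F}}(\mathbb{R}^m)$, Proposition \ref{prop-FSDE} yields $X\in L^{2}_{\mathbb{F}}(\mathbb{R}^n)$, so $(\widehat{\mathbf{\Theta}},\widehat{\nu})\in\mathcal{V}_{cl}[0,\infty)$ and is closed-loop optimal; reading off the constant part gives \eqref{SLQ-value-function}.

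The main obstacle will be the direction (i) $\Rightarrow$ (iii), specifically justifying the passage from the ``parametrized'' open-loop optimal feedback $\mathbf{U}^{*}_i(\cdot)x$ to a \emph{static} matrix $\widehat{\Theta}(i)$ via the stationarity argument, since infinite-horizon DPP with regime switching requires care in exchanging limits and conditional expectations under the Markov chain. Once this is in hand, the algebraic derivation of \eqref{CAREs-SLQ} and \eqref{eta} is routine because the jump martingales $\mathbf{\widetilde{N}}$ contribute to the It\^{o} expansion through the term $\sum_{j}\pi_{ij}P(j)$ exactly matching the $\mathcal{M}(P,i)$ defined in \eqref{notations-3}. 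The range condition \eqref{eta-constraint} is handled in parallel with \eqref{stationary-LQ} by applying the pseudoinverse identities for $\mathcal{N}(P,i)$ coming from (iii) of Definition \ref{def-stabilizing-solution-CAREs}.
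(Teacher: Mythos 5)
Your cycle (ii) $\Rightarrow$ (i) $\Rightarrow$ (iii) $\Rightarrow$ (ii) matches the paper's architecture, and your (iii) $\Rightarrow$ (ii) step is essentially the paper's argument: completion of squares via It\^o's rule applied to $\langle P(\alpha_t)X(t),X(t)\rangle+2\langle\eta(t),X(t)\rangle$, using the CAREs, the BSDE \eqref{eta}, the range conditions, and $\mathcal{N}(P,\alpha)\geq 0$ to isolate a nonnegative quadratic remainder. The problem is your (i) $\Rightarrow$ (iii).

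The route you propose there — extract $\mathbf{U}^{*}_i(\cdot)$ from Proposition \ref{prop-Hilbert-results}, invoke stationarity and the dynamic programming principle to conclude $\widehat{\Theta}(i)=\mathbf{U}^{*}_i(0)$, then posit the ansatz $Y^{*}(t)=P(\alpha_t)X^{*}(t)$ and match coefficients — is precisely the heuristic that the paper presents \emph{before} the theorem as an ``intuitionistic explanation'' (see the discussion culminating in Remark \ref{rmk-explanation}), and it is deliberately \emph{not} used in the actual proof. There are two concrete gaps. First, when the problem is merely open-loop solvable, $\mathcal{N}(P,i)$ can be degenerate and optimal controls need not be unique; Proposition \ref{prop-Hilbert-results} (iii) only provides \emph{some} selection $\mathbf{U}^{*}_i(\cdot)x$, and without uniqueness the flow identity $u^{*}(s+t;x,i)=\mathbf{U}^{*}(t;\alpha_s)X(s;x,i,u^{*})$ that underlies your extraction of a static $\widehat{\Theta}(i)$ does not follow for that particular selection — one would need a measurable-selection and verification argument that the paper does not (and need not) supply. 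Second, the ansatz $Y^{*}=P(\alpha)X^{*}+\eta$ with a \emph{deterministic, time-independent} $\mathbf{P}\in\mathcal{D}(\mathbb{S}^n)$ is exactly what has to be proved, not assumed; matching the martingale parts to identify $Z^{*}$ already presupposes the feedback form of $u^{*}$, so the argument is circular at the key point. The paper circumvents all of this by a regularization: it first reduces to the case where $[A,C]_{\alpha}$ is $L^2$-stable (Proposition \ref{prop-solvability-equivalent}), then perturbs the cost to $J^0+\varepsilon\|u\|^2$, which is uniformly convex, applies Proposition \ref{prop-uni-convex-result} to get $\mathbf{P}_{\varepsilon}$ and $\mathbf{\Theta}_{\varepsilon}$ solving a strictly invertible CARE, proves a uniform bound on $\Theta_{\varepsilon}(i)$ via a Lyapunov estimate on $\Pi_{\varepsilon}(i)$, and passes to the limit $\varepsilon\to 0$ along a subsequence to obtain the static stabilizing solution, the range condition, and (via Fatou) the stabilizer property of $\widehat{\mathbf{\Theta}}$; the BSDE and \eqref{eta-constraint} are then obtained from the open-loop optimality condition $\mathcal{N}(P,\alpha)\widehat{\nu}+\widetilde{\rho}=0$. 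You would need to either adopt this limiting argument or supply a rigorous substitute for the DPP step; as written, the implication (i) $\Rightarrow$ (iii) is not established.
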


Before we prove it, we first make some observations here. Let $\mathbf{P}\in\mathcal{D}\left(\mathbb{S}^n\right)$ be a static stabilizing solution to CAREs \eqref{CAREs-SLQ} and $\mathbf{\Pi}\in\mathcal{D}\left(\mathbb{R}^{m\times n}\right)$ be an arbitrarily selected element such that $\mathbf{\mathcal{K}(\Pi)}\in\mathcal{H}[A,C;B,D]_{\alpha}.$
Then
$$\big[\mathcal{K}(\Pi(\alpha))^{\top}+\mathcal{L}(P,\alpha)\mathcal{N}(P,\alpha)^{\dag}\big]\widetilde{\rho}
=\Pi(\alpha)^{\top}\left[I-\mathcal{N}(P,\alpha)^{\dag}\mathcal{N}(P,\alpha)\right]\widetilde{\rho},$$
where $\widetilde{\rho}$ is defined in \eqref{eta-constraint}. According to the basic  properties of pseudoinverse (see \cite{Penrose.1955}) and observing that $\widetilde{\rho}(t)\in\mathcal{R}(\mathcal{N}(P,\alpha_{t}))$, 
we have
$\big[\mathcal{K}(\Pi(\alpha))^{\top}+\mathcal{L}(P,\alpha)\mathcal{N}(P,\alpha)^{\dag}\big]\widetilde{\rho}=0.$
Thus, combining with BSDE \eqref{eta}, one has
\begin{align*}
 d\eta&=-\big\{A(\alpha)^{\top}\eta+C(\alpha)^{\top}\zeta+C(\alpha)^{\top}P(\alpha)\sigma+P(\alpha)b+q
 -\mathcal{L}(P,\alpha)\mathcal{N}(P,\alpha)^{\dag}\widetilde{\rho}\big\}dt\\
        &\quad+\zeta dW(t)+\mathbf{z}\cdot d\mathbf{\widetilde{N}}(t)\\
&=-\big\{\big[A(\alpha)+B(\alpha)\mathcal{K}(\Pi(\alpha))\big]^{\top}\eta+\big[C(\alpha)+D(\alpha)\mathcal{K}(\Pi(\alpha))\big]^{\top}\left(\zeta+P(\alpha)\sigma\right)\\
&\quad+P(\alpha)b+q+\mathcal{K}(\Pi(\alpha))^{\top}\rho\big\}dt+\zeta dW(t)+\mathbf{z}\cdot d\mathbf{\widetilde{N}}(t),
\end{align*}
By Proposition \ref{prop-BSDE}, we know that the above BSDE admits a unique L$^2$-stable adapted solution, which usually depends on the selection of parameter $\mathbf{\Pi}$. However, if $b(\cdot)$, $\sigma(\cdot)$, $q(\cdot)$, $\rho(\cdot)=0$, then $(\eta(\cdot),\zeta(\cdot),\mathbf{z}(\cdot))\equiv(0,0,\mathbf{0})$ is the unique solution  of BSDE \eqref{eta} such that \eqref{eta-constraint} holds. Therefore, we have the following result.

\begin{corollary}\label{coro-main-result-0}
Let $(H1)$ hold. Then the following statements are equivalent:
\begin{enumerate}
\item[(i)] Problem (M-SLQ)$^{0}$ is open-loop solvable;
\item[(ii)] Problem (M-SLQ)$^{0}$ is closed-loop solvable;
 \item[(iii)] The constrained CAREs \eqref{CAREs-SLQ} admits a static stabilizing solution $\mathbf{P}\in\mathcal{D}\left(\mathbb{S}^n\right)$.
\end{enumerate}
In the above case, any closed-loop optimal strategy $(\widehat{\mathbf{\Theta}},\widehat{\nu}(\cdot))$ admits the following representation:
\begin{equation}\label{SLQ-closed-loop-0}
  \left\{
  \begin{aligned}
    \widehat{\Theta}(i)&=-\mathcal{N}(P,i)^{\dag}\mathcal{L}(P,i)+\left[I-\mathcal{N}(P,i)^{\dag}\mathcal{N}(P,i)\right]\Pi(i),\quad\forall i\in\mathcal{S},\\
    \widehat{\nu}(\cdot)&=\left[I-\mathcal{N}(P,\alpha)^{\dag}\mathcal{N}(P,\alpha)\right]\nu(\cdot),
  \end{aligned}
  \right.
\end{equation}
where $\mathbf{\Pi}=\left(\Pi(1),\Pi(2),\cdots,\Pi(L)\right)\in\mathcal{D}\left(\mathbb{R}^{m\times n}\right)$ is chosen such that $\widehat{\mathbf{\Theta}}\in\mathcal{H}[A,C;B,D]_{\alpha}$ and $\nu(\cdot)\in L_{\mathbb{F}}^{2}(\mathbb{R}^{m})$.
Further, the value function
admits the following representation:
\begin{equation}\label{SLQ-value-function-0}
    \begin{aligned}
      V(x,i)&=\big<P(i)x,x\big>.
    \end{aligned}
\end{equation}
\end{corollary}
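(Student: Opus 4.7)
The plan is to deduce Corollary \ref{coro-main-result-0} directly from Theorem \ref{thm-main-result} by specializing to the homogeneous setting $b(\cdot)=\sigma(\cdot)=q(\cdot)=0$ and $\rho(\cdot)=0$. Problem (M-SLQ)$^{0}$ is simply Problem (M-SLQ) under this specialization, so the equivalences (i) $\Leftrightarrow$ (ii) and the representation of any closed-loop optimal strategy transfer verbatim once one identifies what the general condition (iii) and the formulas \eqref{SLQ-closed-loop}, \eqref{SLQ-value-function} collapse to when the inhomogeneous data vanish.

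The substantive step is to show that, in this setting, condition (iii) of Theorem \ref{thm-main-result} reduces to the existence of a static stabilizing solution to the CAREs \eqref{CAREs-SLQ}, i.e., that the BSDE \eqref{eta} together with the range condition \eqref{eta-constraint} is automatically satisfied. First I would verify by inspection that once $b,\sigma,q,\rho$ are all set to zero, the driver of \eqref{eta} becomes linear and homogeneous in $(\eta,\zeta)$, so the triple $(\eta,\zeta,\mathbf{z})\equiv(0,0,\mathbf{0})$ is an $L^{2}$-stable adapted solution. To conclude it is the unique such solution, I would rewrite \eqref{eta} in the form derived in the paragraph preceding Corollary \ref{coro-main-result-0}, where the driver matrix takes the form $[A(\alpha)+B(\alpha)\mathcal{K}(\Pi(\alpha))]^{\top}$ with $\mathcal{K}(\mathbf{\Pi})\in\mathcal{H}[A,C;B,D]_{\alpha}$, and then invoke the uniqueness half of Proposition \ref{prop-BSDE}. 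Then $\widetilde{\rho}(t)\equiv 0\in\mathcal{R}(\mathcal{N}(P,\alpha_{t}))$ trivially, so the range constraint \eqref{eta-constraint} is vacuous and part (iii) of the main theorem collapses exactly onto part (iii) of the corollary.

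Finally, I would obtain the formulas \eqref{SLQ-closed-loop-0} and \eqref{SLQ-value-function-0} by direct substitution. Plugging $\widetilde{\rho}\equiv 0$ into the general representation \eqref{SLQ-closed-loop} kills the first term of $\widehat{\nu}$, leaving $\widehat{\nu}(\cdot)=[I-\mathcal{N}(P,\alpha)^{\dag}\mathcal{N}(P,\alpha)]\nu(\cdot)$; the formula for $\widehat{\Theta}(i)$ is inherited unchanged from \eqref{SLQ-closed-loop}. Substituting $b=\sigma=q=\rho=0$ and $(\eta,\zeta,\mathbf{z})\equiv(0,0,\mathbf{0})$ into \eqref{SLQ-value-function} wipes out every term in the expectation as well as the cross term $2\langle\eta(0),x\rangle$, leaving exactly $V(x,i)=\langle P(i)x,x\rangle$. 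No step requires a new estimate; the only point needing care is the uniqueness argument in the previous paragraph, which must rely on a stabilizer $\mathcal{K}(\mathbf{\Pi})$ existing by the very definition of a static stabilizing solution.
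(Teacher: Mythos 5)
Your proposal is correct and follows essentially the same route as the paper: the paper likewise derives the corollary by observing that, with $b=\sigma=q=\rho=0$, the BSDE \eqref{eta} can be rewritten with the stabilized driver $[A(\alpha)+B(\alpha)\mathcal{K}(\Pi(\alpha))]^{\top}$ so that Proposition \ref{prop-BSDE} forces $(\eta,\zeta,\mathbf{z})\equiv(0,0,\mathbf{0})$ as the unique $L^{2}$-stable solution, making \eqref{eta-constraint} vacuous and collapsing Theorem \ref{thm-main-result} to the stated corollary. No gaps.
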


\begin{remark}\rm
Since the value function of Problem (M-SLQ)$^{0}$ is unique. Thus, from equation \eqref{SLQ-value-function-0}, one can obtain that the CAREs \eqref{CAREs-SLQ} admits at most one static stabilizing solution.
\end{remark}


To prove Theorem \ref{thm-main-result}, we need some preparations.
The following lemma provides another representation of the cost function for Problem (M-SLQ) and Problem (M-SLQ)$^{0}$.
\begin{lemma}\label{lem-cost-rewritten}
For any given $\mathbf{P}\in\mathcal{D}\left(\mathbb{S}^{n}\right)$, the cost functional $J(x,i;u(\cdot))$ and $J^{0}(x,i;u(\cdot))$ admit the following representation:
\begin{equation}\label{cost-useful}
\begin{aligned}
  J\left(x,i;u(\cdot)\right)&=\big<P(i)x,x\big>+\mathbb{E}\int_{0}^{\infty}\left[
    \left<
    \left(
    \begin{matrix}
    \mathcal{M}(P,\alpha_{t})&\mathcal{L}(P,\alpha_{t})\\
    \mathcal{L}(P,\alpha_{t})^{\top}&\mathcal{N}(P,\alpha_{t})
    \end{matrix}
    \right)
    \left(
    \begin{matrix}
    X(t)\\
    u(t)
    \end{matrix}
    \right),
    \left(
    \begin{matrix}
    X(t)\\
    u(t)
    \end{matrix}
    \right)
    \right>\right.\\
   &\quad\left. +2\left<\left(\begin{matrix}
    \widehat{q}(t)\\
    \widehat{\rho}(t)
    \end{matrix}
    \right),
    \left(
    \begin{matrix}
    X(t)\\
    u(t)
    \end{matrix}
    \right)
    \right>+\big<P(\alpha_{t})\sigma(t),\sigma(t)\big>
    \right]dt,\\
  J^{0}\left(x,i;u(\cdot)\right)&=\big<P(i)x,x\big>+\mathbb{E}\int_{0}^{\infty}\left[
    \left<
    \left(
    \begin{matrix}
    \mathcal{M}(P,\alpha_{t})&\mathcal{L}(P,\alpha_{t})\\
    \mathcal{L}(P,\alpha_{t})^{\top}&\mathcal{N}(P,\alpha_{t})
    \end{matrix}
    \right)
    \left(
    \begin{matrix}
    X^{0}(t)\\
    u(t)
    \end{matrix}
    \right),
    \left(
    \begin{matrix}
    X^{0}(t)\\
    u(t)
    \end{matrix}
    \right)
    \right>\right]dt,\\
  \end{aligned}
\end{equation}
where
\begin{equation}\label{notations}
    \begin{aligned}
    \widehat{q}(t)\triangleq
    P(\alpha_{t})b(t)+C(\alpha_{t})^{\top}P(\alpha_{t})\sigma(t)+q(t), \quad \widehat{\rho}(t)\triangleq
    D(\alpha_{t})^{\top}P(\alpha_{t})\sigma(t)+\rho(t).
    \end{aligned}
\end{equation}
\end{lemma}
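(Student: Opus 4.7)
The plan is to apply Itô's rule to the process $\langle P(\alpha_t) X(t), X(t)\rangle$ and rearrange to obtain the desired identity. The key observation is that the drift generated by Itô expansion consists precisely of the quantities needed to convert $(Q,S,R,q,\rho)$ into $(\mathcal{M}(P,\cdot),\mathcal{L}(P,\cdot),\mathcal{N}(P,\cdot),\widehat{q},\widehat{\rho})$ modulo an inhomogeneous $\langle P\sigma,\sigma\rangle$ term; thus adding the Itô identity to the original definition of $J$ will produce the claimed expression.

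First I would compute $d\langle P(\alpha_t)X(t),X(t)\rangle$ using the Itô formula for regime-switching jump-diffusions. The continuous part of the drift yields
$$\big\langle\bigl[PA+A^{\top}P+C^{\top}PC\bigr]X,X\big\rangle + 2\big\langle\bigl[PB+C^{\top}PD\bigr]X,u\big\rangle + \big\langle D^{\top}PD\,u,u\big\rangle + 2\big\langle Pb+C^{\top}P\sigma, X\big\rangle + 2\big\langle D^{\top}P\sigma, u\big\rangle + \big\langle P\sigma,\sigma\big\rangle,$$
while the Markov-chain jump generator contributes $\sum_{j=1}^{L}\pi_{\alpha_t j}\langle P(j)X,X\rangle$ to the drift and produces a $d\widetilde{\mathbf{N}}$-martingale integrator; the $dW$ martingale piece is standard. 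Recognising $\mathcal{M}(P,\cdot)-Q$, $\mathcal{L}(P,\cdot)-S^{\top}$, $\mathcal{N}(P,\cdot)-R$, $\widehat{q}-q$, $\widehat{\rho}-\rho$ inside these expressions is a purely algebraic check using the definitions \eqref{notations-3} and \eqref{notations}.

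Taking expectation on $[0,T]$, the martingale terms vanish and I obtain
$$\mathbb{E}\langle P(\alpha_T)X(T),X(T)\rangle - \langle P(i)x,x\rangle = \mathbb{E}\int_0^T\bigl[\,\text{drift}\,\bigr]dt.$$
The main obstacle is passing to the limit $T\to\infty$, which requires $\lim_{T\to\infty}\mathbb{E}|X(T)|^2=0$; this does not follow immediately from $X(\cdot)\in L^2_{\mathbb{F}}(\mathbb{R}^n)$. I would argue exactly as in the $(ii)\Rightarrow(v)$ step of Proposition~\ref{prop-L2}: starting from the state equation and using Cauchy--Schwarz together with the $L^2$-integrability of $X$, $u$, $b$, $\sigma$, show that $t\mapsto \mathbb{E}|X(t)|^2$ has a bounded difference quotient (it is uniformly continuous on $[0,\infty)$), and conclude via Lebesgue integrability that $\mathbb{E}|X(T)|^2\to 0$.

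Once the limit is secured, adding the Itô identity to the original expression of $J(x,i;u(\cdot))$ cancels the $Q$, $S$, $R$, $q$, $\rho$ contributions and leaves exactly the claimed $\mathcal{M},\mathcal{L},\mathcal{N},\widehat{q},\widehat{\rho}$ representation plus the $\langle P(\alpha_t)\sigma,\sigma\rangle$ term. The formula for $J^0(x,i;u(\cdot))$ is then the specialisation to $b=\sigma=q=\rho=0$, in which case $\widehat{q}=\widehat{\rho}=0$ and the $\langle P\sigma,\sigma\rangle$ term drops; the convergence $\mathbb{E}|X^0(T)|^2\to 0$ is even more direct since in the homogeneous equation the integrand in the analogous bound is purely quadratic in $X^0$ and $u$.
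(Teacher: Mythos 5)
Your proposal is correct and follows essentially the same route as the paper: apply It\^o's rule to $\langle P(\alpha_t)X(t),X(t)\rangle$, take expectations so the $dW$ and $d\widetilde{\mathbf{N}}$ martingale terms vanish, identify the drift algebraically with $\mathcal{M}-Q$, $\mathcal{L}-S^{\top}$, $\mathcal{N}-R$, $\widehat{q}-q$, $\widehat{\rho}-\rho$ plus $\langle P\sigma,\sigma\rangle$, and add the resulting identity to the definition of $J$. The only difference is that you explicitly justify $\mathbb{E}|X(T)|^2\to 0$ as $T\to\infty$ (via integrability of the difference quotient of $t\mapsto\mathbb{E}|X(t)|^2$), a step the paper's proof passes over silently; your argument for it is valid.
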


\begin{proof}
 We only prove the inhomogeneous case, while the homogeneous case can be derived by setting $b(\cdot)$, $\sigma(\cdot)$, $q(\cdot)$, $\rho(\cdot)=0$. Applying the It\^o's rule to $\big<P(\alpha_{t})X(t;x,i,u),X(t;x,i,u)\big>$, we have
  \begin{equation*}
 \begin{aligned}
  -\left<P(i)x,x\right>
  =\mathbb{E}\int_{0}^{\infty}\Big\{&
  \big<\big[\mathcal{M}(P,\alpha_{s})-Q(\alpha_{s})\big]X(s),X(s)\big>
  +2\big<\big[\mathcal{L}(P,\alpha_{s})-S(\alpha_{s})^{\top}\big]u(s),X(s)\big>\\
  &+\big<\big[\mathcal{N}(P,\alpha_{s})-R(\alpha_{s})\big]u(s),u(s)\big>
  +2\big<\widehat{q}(s)-q(s),X(s)\big>\\
  &+2\big<\widehat{\rho}(s)-\rho(s),u(s)\big>
  +\big<P(\alpha_{s})\sigma(s),\sigma(s)\big>\Big\}ds.
  \end{aligned}
 \end{equation*}
 Together with \eqref{cost}, the desired result can be obtained.
\end{proof}



Now, suppose that system $[A,C]_{\alpha}$ is L$^{2}$-stable and the Problem (M-SLQ) is uniformly convex. Then, by Proposition \ref{prop-Hilbert-results} (i), the unique open-loop optimal control and value function of Problem (M-SLQ)$^{0}$ are respectively given by
$u^{*}(\cdot;x,i)=-\mathbb{N}_{i}^{-1}\mathbb{L}_{i}x$
and
\begin{equation}\label{valu-Hilbert-0}
  V^{0}(x,i)= \big<\big(\mathbb{M}_{i}-\mathbb{L}_{i}^*\mathbb{N}_{i}^{-1}\mathbb{L}_{i}\big)x,x\big>\triangleq \big<P(i)x,x\big>,\quad \forall (x,i)\in\mathbb{R}^{n}\times \mathcal{S},
\end{equation}
where $\mathbb{L}_{i}^*$ represents the the adjoint operator of bounded linear operator $\mathbb{L}_{i}$.
The following result shows that $\mathbf{P}=\left(P(1),P(2),\cdots,P(L)\right)$ defined in \eqref{valu-Hilbert-0} is a static stabilizing solution of the constrained CAREs \eqref{CAREs-SLQ}.

\begin{proposition}\label{prop-uni-convex-result}
Suppose that system $\left[A,C\right]_{\alpha}$ is L$^2$-stable and Problem (M-SLQ) is uniformly convex.  Then the matrix $\mathbf{P}=\left(P(1),P(2),\cdots,P(L)\right)$ defined in \eqref{valu-Hilbert-0} solves the following constrained CAREs:
\begin{equation}\label{uni-convex-CAREs}
\left\{
   \begin{aligned}
    &\mathcal{M}(P,i)-\mathcal{L}(P,i) \mathcal{N}(P,i)^{-1} \mathcal{L}(P,i)^{\top} = 0,\\
    & \mathcal{N}(P,i)> 0,\quad\forall i\in\mathcal{S}.
   \end{aligned}
   \right.
\end{equation}
In addition, if we let
$\Theta(i)\triangleq - \mathcal{N}(P,i)^{-1}\mathcal{L}(P,i)^{\top},\, i\in\mathcal{S},$
then
\begin{equation}\label{uni-convex-Theta}
\mathbf{\Theta}\triangleq\left(\Theta(1), \Theta(2),\cdots,\Theta(L)\right)\in \mathcal{H}[A,C;B,D]_{\alpha},
\end{equation}
and the unique open-loop optimal control of Problem (M-SLQ)$^{0}$ is given by
\begin{equation}\label{uni-convex-open-loop-control}
u^{*}(\cdot;x,i)=\Theta(\alpha(\cdot))X_{\Theta}^{0}(\cdot;x,i,0),\quad \forall (x,i)\in\mathbb{R}^{n}\times \mathcal{S}.
\end{equation}
\end{proposition}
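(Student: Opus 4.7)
The plan is to combine three ingredients: the Hilbert-space reformulation of the cost (Proposition \ref{prop-Hilbet}), the time-homogeneity of the infinite-horizon problem (as outlined before Remark \ref{rmk-explanation}), and the FBSDE characterization of open-loop optimality (Theorem \ref{thm-open-LQ}). Uniform convexity combined with Proposition \ref{prop-Hilbet} gives $\langle \mathbb{N}_{i} u, u\rangle \geq \delta \|u\|^2$ on $L_{\mathbb{F}}^{2}(\mathbb{R}^{m})$, so $\mathbb{N}_{i}$ is coercive and has a bounded inverse. Proposition \ref{prop-Hilbert-results}(i) (with $\widehat{u}_{i}=0$ in the homogeneous setting) then delivers the unique open-loop optimal control $u^{*}(\cdot;x,i)=-\mathbb{N}_{i}^{-1}\mathbb{L}_{i}x$, linear in $x$, and Proposition \ref{prop-Hilbert-results}(iii) supplies $\mathbf{U}^{*}(\cdot;i)\in L_{\mathbb{F}}^{2}(\mathbb{R}^{m\times n})$ with $u^{*}(t;x,i)=\mathbf{U}^{*}(t;i)x$.

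Next I exploit stationarity via the dynamic-programming argument sketched in the paragraph preceding Remark \ref{rmk-explanation}: $u^{*}(s;x,i)=\mathbf{U}^{*}(0;\alpha_{s})X^{*}(s;x,i,u^{*})$ for all $s\geq 0$, and since $\mathcal{F}_{0}$ is $\mathbb{P}$-trivial, $\Theta(i):=\mathbf{U}^{*}(0;i)\in\mathbb{R}^{m\times n}$ is deterministic. Thus $u^{*}(\cdot;x,i)=\Theta(\alpha(\cdot))X^{*}(\cdot;x,i,u^{*})=\Theta(\alpha(\cdot))X_{\Theta}^{0}(\cdot;x,i,0)$, which is \eqref{uni-convex-open-loop-control}. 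Because $X_{\Theta}^{0}(\cdot;x,i,0)=X^{*}(\cdot;x,i,u^{*})\in L_{\mathbb{F}}^{2}(\mathbb{R}^{n})$ for every $(x,i)$, Proposition \ref{prop-L2} gives $L^{2}$-stability of $[A_{\Theta},C_{\Theta}]_{\alpha}$, so $\mathbf{\Theta}\in\mathcal{H}[A,C;B,D]_{\alpha}$, establishing \eqref{uni-convex-Theta}.

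To obtain the CARE, I apply Theorem \ref{thm-open-LQ} in the homogeneous case and plug the ansatz $Y^{*}(t)=P(\alpha_{t})X^{*}(t)$ into the FBSDE \eqref{FBSDE-LQ} with $b=\sigma=q=\rho=0$. It\^o's rule with Markov-chain jumps gives
\begin{equation*}
\begin{aligned}
d[P(\alpha_{t})X^{*}(t)]&=\bigl\{P(\alpha_{t})[A(\alpha_{t})X^{*}+B(\alpha_{t})u^{*}]+\sum_{j=1}^{L}\pi_{\alpha_{t}j}P(j)X^{*}\bigr\}dt\\
&\quad+P(\alpha_{t})[C(\alpha_{t})X^{*}+D(\alpha_{t})u^{*}]dW(t)+\sum_{j=1}^{L}[P(j)-P(\alpha_{t-})]X^{*}d\widetilde{N}_{j}(t).
\end{aligned}
\end{equation*}
Matching with the BSDE coefficients forces $Z^{*}=P(\alpha)[CX^{*}+Du^{*}]$, $\Gamma_{j}^{*}=[P(j)-P(\alpha_{-})]X^{*}$, and, via the drift and notation \eqref{notations-3}, the identity $\mathcal{M}(P,\alpha)X^{*}+\mathcal{L}(P,\alpha)u^{*}=0$. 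Substituting $Y^{*},Z^{*}$ into the stationary condition \eqref{stationary-LQ} also yields $\mathcal{L}(P,\alpha)^{\top}X^{*}+\mathcal{N}(P,\alpha)u^{*}=0$. Setting $u^{*}=\Theta(\alpha)X^{*}$ and evaluating at $t=0$ for arbitrary $(x,i)\in\mathbb{R}^{n}\times\mathcal{S}$ (by averaging over $[0,h]$ and sending $h\to 0^{+}$, using right-continuity of $\alpha$ and continuity of $X^{*}$ at $0$), I obtain
\begin{equation*}
\mathcal{L}(P,i)^{\top}+\mathcal{N}(P,i)\Theta(i)=0,\qquad \mathcal{M}(P,i)+\mathcal{L}(P,i)\Theta(i)=0,\qquad \forall i\in\mathcal{S}.
\end{equation*}

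The main obstacle is upgrading this to strict positivity $\mathcal{N}(P,i)>0$, which is needed to invert and derive the Riccati form. The plan is to use Lemma \ref{lem-cost-rewritten} with our $\mathbf{P}$: since $V^{0}(0,i)=0$, for every $u\in L_{\mathbb{F}}^{2}(\mathbb{R}^{m})$ one has
\begin{equation*}
J^{0}(0,i;u)=\mathbb{E}\int_{0}^{\infty}\Bigl\langle \begin{pmatrix}\mathcal{M}(P,\alpha)&\mathcal{L}(P,\alpha)\\ \mathcal{L}(P,\alpha)^{\top}&\mathcal{N}(P,\alpha)\end{pmatrix}\begin{pmatrix}X^{0}\\ u\end{pmatrix},\begin{pmatrix}X^{0}\\ u\end{pmatrix}\Bigr\rangle dt.
\end{equation*}
Taking $u=\mathbb{I}_{[0,h]}w$ for arbitrary $w\in\mathbb{R}^{m}$, the $L^{2}$-stability of $[A,C]_{\alpha}$ combined with Proposition \ref{prop-FSDE} gives $\|X^{0}\|_{L_{\mathbb{F}}^{2}}^{2}\leq Kh|w|^{2}$, so every contribution other than $\mathbb{E}\int_{0}^{h}\langle\mathcal{N}(P,\alpha_{t})w,w\rangle dt$ is $o(h)$; right-continuity of $\alpha$ at $0$ then yields $\lim_{h\to 0^{+}}h^{-1}J^{0}(0,i;\mathbb{I}_{[0,h]}w)=\langle\mathcal{N}(P,i)w,w\rangle$. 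Uniform convexity forces this limit to be $\geq\delta|w|^{2}$, hence $\mathcal{N}(P,i)\geq\delta I$. Inverting gives $\Theta(i)=-\mathcal{N}(P,i)^{-1}\mathcal{L}(P,i)^{\top}$, and substitution into $\mathcal{M}(P,i)+\mathcal{L}(P,i)\Theta(i)=0$ produces $\mathcal{M}(P,i)-\mathcal{L}(P,i)\mathcal{N}(P,i)^{-1}\mathcal{L}(P,i)^{\top}=0$, completing \eqref{uni-convex-CAREs}.
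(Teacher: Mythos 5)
Your route is genuinely different from the paper's (which truncates to a finite horizon with terminal weight $G(\alpha_T)$, invokes the finite-horizon Riccati theory of \cite{zhang2021open}, and passes to the limit via Lemma \ref{lem-useful}), but as written it has two gaps that I do not see how to close without essentially reverting to the paper's argument.

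First, the step that isolates $\mathcal{N}(P,i)$ is quantitatively wrong. With $u=\mathbb{I}_{[0,h]}w$ and $X^{0}(0)=0$, the martingale part of the state gives $\mathbb{E}|X^{0}(h)|^{2}=h\,|D(i)w|^{2}+o(h)$, and since for $t>h$ the state evolves autonomously, $\mathbb{E}\int_{h}^{\infty}|X^{0}(t)|^{2}dt$ is genuinely of order $h$ (it equals $\mathbb{E}\langle \widetilde P(\alpha_{h})X^{0}(h),X^{0}(h)\rangle$ for the Lyapunov solution $\widetilde P$). Hence $\mathbb{E}\int_{0}^{\infty}\langle \mathcal{M}(P,\alpha)X^{0},X^{0}\rangle dt$ is $O(h)$, \emph{not} $o(h)$, and it competes at the same order as $\mathbb{E}\int_{0}^{h}\langle\mathcal{N}(P,\alpha_t)w,w\rangle dt$. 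Your limit is therefore $\langle[\mathcal{N}(P,i)+D(i)^{\top}\widetilde G(i)D(i)]w,w\rangle$ for some nonnegative correction $\widetilde G(i)$, and the inequality $\mathcal{N}(P,i)\geq\delta I$ does not follow; indeed the analogous naive computation with the original cost only yields $R(i)+D(i)^{\top}G(i)D(i)\geq\delta I$ with $G(i)\geq P(i)$, which is the wrong direction. The paper avoids this precisely by inheriting $R(i)+D(i)^{\top}P_{i}(t;T)D(i)\geq\delta I$ from the finite-horizon problem and passing to the limit.

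Second, the derivation of the algebraic identities rests on the ansatz $Y^{*}(t)=P(\alpha_{t})X^{*}(t)$, which is asserted rather than proved: matching It\^o differentials only shows that \emph{if} this identity holds then $\mathcal{M}(P,\alpha)X^{*}+\mathcal{L}(P,\alpha)u^{*}=0$, but verifying that $P(\alpha)X^{*}$ actually solves the adjoint BSDE requires exactly the Riccati relation you are trying to establish (or a rigorous dynamic-programming identification of $Y^{*}$ with the gradient of the value function along the optimal trajectory). Relatedly, the feedback form $u^{*}(s;x,i)=\mathbf{U}^{*}(0;\alpha_{s})X^{*}(s)$ is taken from the paragraph preceding Remark \ref{rmk-explanation}, which the paper explicitly offers only as an ``intuitionistic explanation'': the dynamic programming principle for open-loop optimal controls in this setting is not proved there, and $\mathbf{U}^{*}(0;i)$ is the time-zero value of a process defined only $dt\otimes d\mathbb{P}$-a.e., so it is not well defined without further regularity. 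Each of these points needs an independent argument before your proof can stand.
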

\begin{proof}
Let
\begin{equation*}
  \left\{
  \begin{aligned}
  &d\Phi(t;s,i)=A(\alpha_{t})\Phi(t;s,i)dt+C(\alpha_{t})\Phi(t;s,i)dW(t), \quad t\geq s,\\
  &\Phi(s;s,i)=I,\quad \alpha_{s}=i, \quad i\in\mathcal{S},
  \end{aligned}
  \right.
\end{equation*}
 and
$G(i)\triangleq \mathbb{E}\Big[\int_{0}^{\infty}\Phi(t;0,i)^{\top}Q(\alpha_t)\Phi(t;0,i)dt|\alpha_{0}=i\Big],\, i\in\mathcal{S}.$
Then, we have
$$
G(\alpha_{T})= \mathbb{E}\Big[\int_{T}^{\infty}\Phi(t;T,\alpha_{T})^{\top}Q(\alpha_t)\Phi(t;T,\alpha_{T})dt|\alpha_{T}\Big].
$$
Now, for any $T>0$, consider the following state equation
\begin{equation}\label{state-T}
  \left\{
  \begin{aligned}
  &d X_{T}(t)=\left[A\left(\alpha_{t}\right) X_{T}(t)+B\left(\alpha_{t}\right) u(t)\right] dt
  +\left[C\left(\alpha_{t}\right) X_{T}(t)+D\left(\alpha_{t}\right) u(t)\right] d W(t),\quad  t \in[0, T] , \\
  &X_{T}(0)=x,  \quad \alpha_{0}=i ,
  \end{aligned}
  \right.
\end{equation}
and the cost functional
\begin{equation}\label{cost-T}
\begin{aligned}
&\quad J_{T}\left(x,i;u(\cdot)\right)
\triangleq \mathbb{E} \Big\{
\int_{0}^{T}  \left< \left(\begin{matrix}
Q(\alpha)  &   S(\alpha)^{\top} \\
S(\alpha)  &  R(\alpha)
\end{matrix}
\right)
\left(\begin{matrix}
X_{T}\\
u
\end{matrix}
\right),
\left(\begin{matrix}
X_{T}\\
u
\end{matrix}
\right) \right> dt
 + \left\langle G(\alpha_{T}) X_{T}(T),X_{T}(T)\right\rangle\Big\} .
\end{aligned}
\end{equation}
We denote the above SLQ problem 
as Problem (M-SLQ)$_{T}$.
Next, we will prove that
$$
 J_{T}\left(0,i;u(\cdot)\right)\geq \delta \mathbb{E}\int_0^T|u(t)|^2dt, \quad \forall  u(\cdot)\in L_{\mathbb{F}}^{2}([0,T];\mathbb{R}^{m}),\quad \forall i\in\mathcal{S}.
  $$
To this end, for any $u(\cdot)\in L_{\mathbb{F}}^{2}([0,T];\mathbb{R}^{m})$, let $X_{T}(\cdot)$ be the corresponding solution to \eqref{state-T} with initial value $(x,i)$ and
$\nu(\cdot)\triangleq u(\cdot)\mathbb{I}_{[0, T]}(\cdot)+0 \cdot \mathbb{I}_{(T, \infty)}(\cdot).$
Then $\nu(\cdot)\in L_{\mathbb{F}}^{2}(\mathbb{R}^{m})$  and
$X(t)\equiv X(t;x,i,\nu)=X_{T}(t)\mathbb{I}_{[0, T]}(t)+\Phi(t;T,\alpha_T)X_T(T)\mathbb{I}_{(T, \infty)}(t).$
Therefore,
\begin{align*}
 \mathbb{E}\left\langle G\left(\alpha_{T}\right) X_{T}(T),X_{T}(T)\right\rangle&= \mathbb{E}\big< \mathbb{E}\Big[\int_{T}^{\infty}\Phi(t;T,\alpha_{T})^{\top}Q(\alpha_t)\Phi(t;T,\alpha_{T})dt|\alpha_{T}\Big]X_T(T),X_T(T)\big>\\
 &=\mathbb{E}\int_{T}^{\infty}\big<Q(\alpha_t)\Phi(t;T,\alpha_{T})X_T(T),\Phi(t;T,\alpha_{T})X_T(T)\big>dt\\
 &=\mathbb{E}\int_{T}^{\infty}\big<Q(\alpha_t)X(t),X(t)\big>dt,
\end{align*}
and
\begin{align*}
 J_{T}\left(x,i;u(\cdot)\right)
&=\mathbb{E}\Big\{
\int_{0}^{\infty} \left<\left(\begin{array}{ll}
Q\left(\alpha_{t}\right) & S\left(\alpha_{t}\right)^{\top} \\
S\left(\alpha_{t}\right) & R\left(\alpha_{t}\right)
\end{array}
\right)
\left(\begin{array}{c}
X(t)\\
\nu(t)
\end{array}
\right),
\left(\begin{array}{c}
X(t)\\
\nu(t)
\end{array}
\right)\right> dt\Big\}
=J^{0}\left(x,i;\nu(\cdot)\right).
\end{align*}
In particular, taking $x=0$, we obtain
\begin{align*}
  J_{T}\left(0,i;u(\cdot)\right)=J^{0}\left(0,i;\nu(\cdot)\right)\geq \delta \mathbb{E}\int_0^{\infty}|\nu(t)|^2dt&=\delta \mathbb{E}\int_0^T|u(t)|^2dt,\, \forall  u(\cdot)\in L_{\mathbb{F}}^{2}([0,T];\mathbb{R}^{m}),\, \forall i\in\mathcal{S},
\end{align*}
which implies that the Problem (M-SLQ)$_{T}$ is uniformly convex. It follows from the results of \cite{zhang2021open}, one can easily obtain that, for any $T>0$, the CDREs
\begin{equation*}
\left\{
\begin{aligned}
&\Dot{P}_{i}(t;T)+P_i(t;T)A(i)+A(i)^{\top}P_i(t;T)+C(i)^{\top}P_i(t;T)C(i)+Q(i)+\sum_{j=1}^{L}\pi_{ij}P_{j}(t;T)\\
&\quad-\big[P_i(t;T)B(i)+C(i)^{\top}P_i(t;T)D(i)+S(i)^{\top}\big]\big[R(i)+D(i)^{\top}P_i(t;T)D(i)\big]^{-1}\\
&\quad\times \big[P_i(t;T)B(i)+C(i)^{\top}P_i(t;T)D(i)+S(i)^{\top}\big]^{\top}=0,\quad t\in[0,T],\\
&P_i(T;T)=G(i),\quad i\in\mathcal{S},
\end{aligned}
\right.
\end{equation*}
admits a solution $\mathbf{P(\cdot;T)}\in \mathcal{D}\left(C\left([0,T];\mathbb{S}^{n}\right)\right)$ such that
\begin{align}\label{uni-convex-result-p1}
 R(i)+D(i)^{\top}P_i(t;T)D(i)&\geq\delta I, \quad\forall t\in[0,T],\quad i\in\mathcal{S},\\
  \label{uni-convex-result-p2}
 \inf_{u(\cdot)\in L_{\mathbb{F}}^{2}([0,T];\mathbb{R}^{m})} J_{T}(x,i;u(\cdot))&=\big<P_{i}(0;T)x,x\big>,\quad \forall (x,i)\in\mathbb{R}^{n}\times \mathcal{S}.
\end{align}
Recalling \eqref{valu-Hilbert-0}, we have
$$
\big<P(i)x,x\big>\leq J^{0}\left(x,i;\nu(\cdot)\right)=J_{T}(x,i;u(\cdot)),\quad \forall u(\cdot)\in L_{\mathbb{F}}^{2}([0,T];\mathbb{R}^{m}),\quad \forall (x,i)\in\mathbb{R}^{n}\times \mathcal{S},
$$
which implies
\begin{equation}\label{uni-convex-result-p3}
\big<P(i)x,x\big>\leq  \big<P_{i}(0;T)x,x\big>,\quad \forall (x,i)\in\mathbb{R}^{n}\times \mathcal{S}, \quad \forall T>0.
\end{equation}
On the other hand, for any $\varepsilon>0$ and initial state $(x,i)$, one can find $u_{\varepsilon}(\cdot)\in \mathcal{U}_{ad}^{0}(x,i)$ such that
\begin{equation}\label{uni-convex-result-p4}
\begin{aligned}
J^{0}\left(x,i;u_{\varepsilon}(\cdot)\right)= \mathbb{E}
\int_{0}^{\infty} \left<\left(\begin{matrix}
Q\left(\alpha\right) & S\left(\alpha\right)^{\top} \\
S\left(\alpha\right) & R\left(\alpha\right)
\end{matrix}
\right)
\left(\begin{matrix}
X_{\varepsilon}\\
u_{\varepsilon}
\end{matrix}
\right),
\left(\begin{matrix}
X_{\varepsilon}\\
u_{\varepsilon}
\end{matrix}
\right)\right> dt
\leq\big<P(i)x,x\big>+\varepsilon,
\end{aligned}
\end{equation}
where $X_{\varepsilon}(\cdot)\equiv X^{0}(\cdot;x,i,u_{\varepsilon})\in L_{\mathbb{F}}^{2}(\mathbb{R}^{n})$. Hence, we can select  a sufficiently large $T>0$ such that
\begin{align*}
&\mathbb{E}\left|\big<G(\alpha_{T})X_{\varepsilon}(T),X_{\varepsilon}(T)\big>\right| \leq   \varepsilon, \quad \mbox{and}\quad
\mathbb{E}\int_{T}^{\infty} \left|\left<\left(\begin{array}{ll}
Q\left(\alpha\right) & S\left(\alpha\right)^{\top} \\
S\left(\alpha\right) & R\left(\alpha\right)
\end{array}
\right)
\left(\begin{array}{c}
X_{\varepsilon}\\
u_{\varepsilon}
\end{array}
\right),
\left(\begin{array}{c}
X_{\varepsilon}\\
u_{\varepsilon}
\end{array}
\right)\right> \right|dt\leq   \varepsilon.
\end{align*}
Let $u_{\varepsilon}(\cdot;T)=u_{\varepsilon}(\cdot)\big|_{[0,T]}$. Then
\begin{equation}\label{uni-convex-result-p5}
 \begin{aligned}
  J^{0}(x,i;u_{\varepsilon}(\cdot))&=J_{T}(x,i;u_{\varepsilon}(\cdot;T))-\mathbb{E}\big<G(\alpha_{T})X_{\varepsilon}(T),X_{\varepsilon}(T)\big>\\
  &\quad +\mathbb{E}\int_{T}^{\infty} \left<\left(\begin{array}{ll}
Q\left(\alpha\right) & S\left(\alpha\right)^{\top} \\
S\left(\alpha\right) & R\left(\alpha\right)
\end{array}
\right)
\left(\begin{array}{c}
X_{\varepsilon}\\
u_{\varepsilon}
\end{array}
\right),
\left(\begin{array}{c}
X_{\varepsilon}\\
u_{\varepsilon}
\end{array}
\right)\right> dt\\
&\geq J_{T}(x,i;u_{\varepsilon}(\cdot;T))-2\varepsilon.
  \end{aligned}
\end{equation}
Combining with \eqref{uni-convex-result-p4}, for sufficiently large $T>0$, one has
$\big<P_{i}(0,T)x,x\big>\leq \big<P(i)x,x\big>+3\varepsilon$. 
Thus, together with \eqref{uni-convex-result-p3}, we can obtain that 
$P_{i}(0;T)\rightarrow P(i)$ as $T\rightarrow\infty$.
In addition, \eqref{uni-convex-result-p1} further implies that $\mathcal{N}(P,i)>0$ for $i\in\mathcal{S}$. Hence, by Lemma \ref{lem-useful}, we obtain that $\mathbf{P}=\left(P(1),P(2),\cdots,P(L)\right)$ defined in \eqref{valu-Hilbert-0} solves the constrained CAREs
\eqref{uni-convex-CAREs}.

Next, we verify that $\mathbf{\Theta}$ defined in \eqref{uni-convex-Theta} is a stabilizer of system $[A,C;B,D]_{\alpha}$. To this end, let $(X^{*}(\cdot),u^{*}(\cdot))$ be the corresponding optimal pair of Problem (M-SLQ)$^{0}$ for intial state $(x,i)$. Then by Lemma \ref{lem-cost-rewritten}, 
we have
\begin{align*}
 \big<P(i)x,x\big>&=J^{0}\left(x,i;u^{*}(\cdot)\right)\\
 &=\big<P(i)x,x\big>+ \mathbb{E}\int_{0}^{\infty}\left[
    \left<\mathcal{M}(P,\alpha)X^{*},X^{*}\right>+2\left<\mathcal{L}(P,\alpha)u^{*},X^{*}\right>+\left<\mathcal{N}(P,\alpha)u^{*},u^{*}\right>\right]dt\\
&=\big<P(i)x,x\big>+\mathbb{E}\int_{0}^{\infty}\left<\mathcal{N}(P,\alpha)\left(u^{*}-\Theta(\alpha)X^{*}\right),u^{*}-\Theta(\alpha)X^{*}\right>dt.
\end{align*}
Since $\mathcal{N}(P,i)>0$ for each $i\in\mathcal{S}$, it follows from the above equation that
$$
u^{*}(\cdot;x,i)=\Theta(\alpha(\cdot))X^{*}(\cdot;x,i)\quad\text{and}\quad
X_{\Theta}^{0}(\cdot;x,i,0)=X^{*}(\cdot;x,i)\in L_{\mathbb{F}}^{2}(\mathbb{R}^{n}),
\quad \forall (x,i)\in\mathbb{R}^{n}\times \mathcal{S}.
$$
This completes the proof.
\end{proof}

\begin{remark}\label{rmk-CAREs}\rm
Obviously, the solution $\mathbf{P}$ to CAREs \eqref{uni-convex-CAREs} must be the static stabilizing solution to CAREs \eqref{CAREs-SLQ}. Conversely, if an static stabilizing solution $\mathbf{P}$ to CAREs \eqref{CAREs-SLQ} satisfies $\mathcal{N}(P,i)> 0$ for any $i\in\mathcal{S}$, then it must be the solution to CAREs \eqref{uni-convex-CAREs}.
\end{remark}
As a final preparation to prove the Theorem \ref{thm-main-result}, let $\Sigma\in\mathcal{H}[A,C;B,D]_{\alpha}$ and 
\begin{equation}\label{notations-2}
\left\{
    \begin{aligned}
    &\mathcal{M}_{\Sigma}(P,i)\triangleq P(i)A_{\Sigma}(i)+A_{\Sigma}(i)^{\top}P(i)+C_{\Sigma}(i)^{\top}P(i)C_{\Sigma}(i)+Q_{\Sigma}(i)+\sum_{j=1}^{L}\pi_{ij}P(j),\\
    &\mathcal{L}_{\Sigma}(P,i)\triangleq P(i)B(i)+C_{\Sigma}(i)^{\top}P(i)D(i)+S_{\Sigma}(i)^{\top}, \quad i\in\mathcal{S},
    \end{aligned}
    \right.
\end{equation}
where $A_{\Sigma}(i)$, $C_{\Sigma}(i)$, $Q_{\Sigma}(i)$ and $S_{\Sigma}(i)$ are defined similar to \eqref{notations-theta-problem}.
Then the corresponding CAREs \eqref{CAREs-SLQ} and BSDE \eqref{eta} in Problem (M-SLQ-$\Sigma$) becomes
\begin{equation}\label{CAREs-SLQ-g}
\left\{
  \begin{aligned}
    &\mathcal{M}_{\Sigma}(P,i)-\mathcal{L}_{\Sigma}(P,i) \mathcal{N}(P,i)^{\dag} \mathcal{L}_{\Sigma}(P,i)^{\top} = 0,\\
    & \mathcal{L}_{\Sigma}(P,i)\left[I-\mathcal{N}(P,i)\mathcal{N}(P,i)^{\dag}\right]=0,\\
    & \mathcal{N}(P,i)\geq 0,\quad\forall i\in\mathcal{S},
  \end{aligned}
  \right.
\end{equation}
and 
\begin{equation}\label{eta-g}
      \begin{aligned}
        d\eta&=-\big\{\big[A_{\Sigma}(\alpha)^{\top}-\mathcal{L}_{\Sigma}(P,\alpha)\mathcal{N}(P,\alpha)^{\dag}B(\alpha)^{\top}\big]\eta+\big[C_{\Sigma}(\alpha)^{\top}-\mathcal{L}_{\Sigma}(P,\alpha)\mathcal{N}(P,\alpha)^{\dag}D(\alpha)^{\top}\big]\zeta\\
        &\quad+\big[C_{\Sigma}(\alpha)^{\top}-\mathcal{L}_{\Sigma}(P,\alpha)\mathcal{N}(P,\alpha)^{\dag}D(\alpha)^{\top}\big]P(\alpha)\sigma-\mathcal{L}_{\Sigma}(P,\alpha)\mathcal{N}(P,\alpha)^{\dag}\rho+P(\alpha)b+q_{\Sigma}\big\}dt\\
        &\quad+\zeta dW(t)+\mathbf{z}\cdot d\mathbf{\widetilde{N}}(t),\quad t\geq 0.
      \end{aligned}
  \end{equation}

The following result establishes an relation between the solvabilities to  \eqref{CAREs-SLQ} as well as \eqref{eta} and the solvabilities to \eqref{CAREs-SLQ-g}-\eqref{eta-g}.
\begin{proposition}\label{prop-solvability-equivalent}
Let $\Sigma\in\mathcal{H}[A,C;B,D]_{\alpha}$. The following results hold:
\begin{enumerate}
    \item [(i)] If $\mathbf{P}\in\mathcal{D}(\mathbb{S}^{n})$ is the static stabilizing solution to the CAREs \eqref{CAREs-SLQ-g}, then $\mathbf{P}$ also is the static stabilizing solution to the CAREs \eqref{CAREs-SLQ}.
    \item[(ii)] If $(\eta(\cdot),\zeta(\cdot),\mathbf{z}(\cdot))$ is the $L^{2}$-stable adapted solution to BSDE \eqref{eta-g} satisfying condition \eqref{eta-constraint}, then  $(\eta(\cdot),\zeta(\cdot),\mathbf{z}(\cdot))$ also solves BSDE \eqref{eta}.
\end{enumerate}
\end{proposition}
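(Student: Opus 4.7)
\textbf{Proof proposal for Proposition \ref{prop-solvability-equivalent}.}

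The plan is to reduce both statements to pure algebraic identities linking the $\Sigma$-shifted quantities to the unshifted ones, and then exploit two Moore--Penrose properties, namely $\mathcal{N}\mathcal{N}^{\dag}\mathcal{N}=\mathcal{N}$ and $\mathcal{L}(P,i)=\mathcal{L}(P,i)\mathcal{N}(P,i)^{\dag}\mathcal{N}(P,i)$ (the latter coming from the range condition $\mathcal{L}_{\Sigma}(P,i)[I-\mathcal{N}(P,i)\mathcal{N}(P,i)^{\dag}]=0$, once we have shown the unshifted range condition). The two key identities, obtained by a direct expansion using the definitions \eqref{notations-theta-problem} and \eqref{notations-3}--\eqref{notations-2}, are
\begin{equation*}
\mathcal{L}_{\Sigma}(P,i)=\mathcal{L}(P,i)+\Sigma(i)^{\top}\mathcal{N}(P,i),
\qquad
\mathcal{M}_{\Sigma}(P,i)=\mathcal{M}(P,i)+\mathcal{L}(P,i)\Sigma(i)+\Sigma(i)^{\top}\mathcal{L}(P,i)^{\top}+\Sigma(i)^{\top}\mathcal{N}(P,i)\Sigma(i).
\end{equation*}

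For item (i), I would first verify the range constraint of \eqref{CAREs-SLQ}. Multiplying $\mathcal{L}_{\Sigma}(P,i)[I-\mathcal{N}(P,i)\mathcal{N}(P,i)^{\dag}]=0$ out and using $\mathcal{N}\mathcal{N}^{\dag}\mathcal{N}=\mathcal{N}$ immediately cancels the $\Sigma^{\top}\mathcal{N}$ contribution, leaving $\mathcal{L}(P,i)[I-\mathcal{N}(P,i)\mathcal{N}(P,i)^{\dag}]=0$. Feeding this back into $\mathcal{L}_{\Sigma}\mathcal{N}^{\dag}\mathcal{L}_{\Sigma}^{\top}$ and expanding via $\mathcal{L}\mathcal{N}^{\dag}\mathcal{N}=\mathcal{L}$ and its transpose, one finds
\begin{equation*}
\mathcal{L}_{\Sigma}(P,i)\mathcal{N}(P,i)^{\dag}\mathcal{L}_{\Sigma}(P,i)^{\top}=\mathcal{L}(P,i)\mathcal{N}(P,i)^{\dag}\mathcal{L}(P,i)^{\top}+\mathcal{L}(P,i)\Sigma(i)+\Sigma(i)^{\top}\mathcal{L}(P,i)^{\top}+\Sigma(i)^{\top}\mathcal{N}(P,i)\Sigma(i),
\end{equation*}
so subtracting this from $\mathcal{M}_{\Sigma}(P,i)$ gives $\mathcal{M}(P,i)-\mathcal{L}(P,i)\mathcal{N}(P,i)^{\dag}\mathcal{L}(P,i)^{\top}=0$. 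For the stabilizer clause, let $\mathbf{\Pi}\in\mathcal{D}(\mathbb{R}^{m\times n})$ be as in Definition \ref{def-stabilizing-solution-CAREs} applied to \eqref{CAREs-SLQ-g}, so $\mathcal{K}_{\Sigma}(\mathbf{\Pi})\in\mathcal{H}[A_{\Sigma},C_{\Sigma};B,D]_{\alpha}$. A direct calculation using $\mathcal{L}_{\Sigma}=\mathcal{L}+\Sigma^{\top}\mathcal{N}$ shows
\begin{equation*}
\Sigma(i)+\mathcal{K}_{\Sigma}\!\left(\Pi(i)\right)=-\mathcal{N}(P,i)^{\dag}\mathcal{L}(P,i)^{\top}+[I-\mathcal{N}(P,i)^{\dag}\mathcal{N}(P,i)](\Sigma(i)+\Pi(i))=\mathcal{K}(\Sigma(i)+\Pi(i)),
\end{equation*}
and since $A_{\Sigma}+B\mathcal{K}_{\Sigma}(\Pi)=A+B(\Sigma+\mathcal{K}_{\Sigma}(\Pi))$ and similarly for $C_{\Sigma}+D\mathcal{K}_{\Sigma}(\Pi)$, one concludes $\mathcal{K}(\mathbf{\Pi}+\mathbf{\Sigma})\in\mathcal{H}[A,C;B,D]_{\alpha}$, confirming $\mathbf{P}$ is a static stabilizing solution of \eqref{CAREs-SLQ}.

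For item (ii), the plan is simply to subtract the generator in \eqref{eta-g} from the generator in \eqref{eta} and show the difference vanishes on the constraint surface \eqref{eta-constraint}. Using $A_{\Sigma}^{\top}=A^{\top}+\Sigma^{\top}B^{\top}$, $C_{\Sigma}^{\top}=C^{\top}+\Sigma^{\top}D^{\top}$, $\mathcal{L}_{\Sigma}=\mathcal{L}+\Sigma^{\top}\mathcal{N}$, and $q_{\Sigma}=q+\Sigma^{\top}\rho$, the difference collapses to
\begin{equation*}
\Sigma(\alpha_{t})^{\top}\bigl[I-\mathcal{N}(P,\alpha_{t})\mathcal{N}(P,\alpha_{t})^{\dag}\bigr]\widetilde{\rho}(t),
\end{equation*}
with $\widetilde{\rho}$ as in \eqref{eta-constraint}. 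Since $\widetilde{\rho}(t)\in\mathcal{R}(\mathcal{N}(P,\alpha_{t}))$, writing $\widetilde{\rho}(t)=\mathcal{N}(P,\alpha_{t})v(t)$ and invoking $\mathcal{N}\mathcal{N}^{\dag}\mathcal{N}=\mathcal{N}$ makes this term identically zero, so the same triple $(\eta,\zeta,\mathbf{z})$ solves \eqref{eta} as well.

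No step looks truly delicate: the whole argument is bookkeeping with the two algebraic identities above and the Moore--Penrose properties. The only place where one must be careful is the stabilizer part of (i), where the transformation $\mathbf{\Pi}\mapsto\mathbf{\Pi}+\mathbf{\Sigma}$ must be read off cleanly; this is the spot I expect to dwell on longest, but it is still a short computation.
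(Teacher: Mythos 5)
Your proposal is correct and follows essentially the same route as the paper: both rest on the identities $\mathcal{L}_{\Sigma}(P,i)=\mathcal{L}(P,i)+\Sigma(i)^{\top}\mathcal{N}(P,i)$ and $\mathcal{M}_{\Sigma}(P,i)=\mathcal{M}(P,i)+\mathcal{L}(P,i)\Sigma(i)+\Sigma(i)^{\top}\mathcal{L}(P,i)^{\top}+\Sigma(i)^{\top}\mathcal{N}(P,i)\Sigma(i)$, deduce the unshifted range condition from the shifted one, expand $\mathcal{L}_{\Sigma}\mathcal{N}^{\dag}\mathcal{L}_{\Sigma}^{\top}$, and for (ii) collapse the difference of generators to $\Sigma^{\top}[I-\mathcal{N}\mathcal{N}^{\dag}]\widetilde{\rho}=0$. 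The only cosmetic difference is in the stabilizer clause, where you exhibit the parameter explicitly via $\Sigma(i)+\mathcal{K}_{\Sigma}(\Pi(i))=\mathcal{K}(\Sigma(i)+\Pi(i))$, while the paper derives $\mathcal{N}(P,i)(\Sigma^{*}(i)+\Sigma(i))=-\mathcal{L}(P,i)^{\top}$ and then infers the existence of a suitable $\mathbf{\Pi}$; these are the same argument.
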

\begin{proof}
Let $\mathbf{P}\in\mathcal{D}(\mathbb{S}^{n})$ be the static stabilizing solution to the CAREs \eqref{CAREs-SLQ-g}. Obviously, one can easily verify that 
\begin{equation}\label{relation}
  \left\{
  \begin{aligned}
 &\mathcal{M}_{\Sigma}(P,i)=\mathcal{M}(P,i)+\mathcal{L}(P,i)\Sigma(i)+\Sigma(i)^{\top}\mathcal{L}(P,i)^{\top}+\Sigma(i)^{\top}\mathcal{N}(P,i)\Sigma(i),\\
 &\mathcal{L}_{\Sigma}(P,i)=\mathcal{L}(P,i)+\Sigma(i)^{\top}\mathcal{N}(P,i).
  \end{aligned}
  \right.
\end{equation}

Note that
 \begin{align*}
  0&= \mathcal{L}_{\Sigma}(P,i)\left[I-\mathcal{N}(P,i)\mathcal{N}(P,i)^{\dag}\right] \\
  &=\mathcal{L}(P,i)\left[I-\mathcal{N}(P,i)\mathcal{N}(P,i)^{\dag}\right]+\Sigma(i)^{\top}\mathcal{N}(P,i)\left[I-\mathcal{N}(P,i)\mathcal{N}(P,i)^{\dag}\right]\\
  &=\mathcal{L}(P,i)\left[I-\mathcal{N}(P,i)\mathcal{N}(P,i)^{\dag}\right].
\end{align*}
Combining with \eqref{relation}, we have
\begin{align*}
  0&= \mathcal{M}_{\Sigma}(P,i)-\mathcal{L}_{\Sigma}(P,i) \mathcal{N}(P,i)^{\dag} \mathcal{L}_{\Sigma}(P,i)^{\top} \\
  &=\mathcal{M}(P,i)-\mathcal{L}(P,i) \mathcal{N}(P,i)^{\dag}\mathcal{L}(P,i)^{\top}
  +\mathcal{L}(P,i)\left[I-\mathcal{N}(P,i)^{\dag}\mathcal{N}(P,i)\right]\Sigma(i)\\
  &\quad+\Sigma(i)^{\top}\left[I-\mathcal{N}(P,i)\mathcal{N}(P,i)^{\dag}\right]\mathcal{L}(P,i)^{\top}\\
  &=\mathcal{M}(P,i)-\mathcal{L}(P,i) \mathcal{N}(P,i)^{\dag}\mathcal{L}(P,i)^{\top}.
\end{align*}
Let $\mathbf{\Pi_{\Sigma}}\in\mathcal{D}\left(\mathbb{R}^{m\times n}\right)$ such that
$$\mathbf{\Sigma^{*}}\equiv\mathcal{K}\left(\mathbf{\Pi_{\Sigma}}\right)
=\left[\mathcal{K}\left(\Pi_{\Sigma}(1)\right),\mathcal{K}\left(\Pi_{\Sigma}(2)\right),\cdots,\mathcal{K}\left(\Pi_{\Sigma}(L)\right)\right]\in\mathcal{H}\left[A_{\Sigma},C_{\Sigma};B,D\right]_{\alpha},$$
where
$$\Sigma^{*}(i)\equiv\mathcal{K}\left(\Pi_{\Sigma}(i)\right)=-\mathcal{N}(P,i)^{\dag}\mathcal{L}_{\Sigma}(P,i)^{\top}+\left[I-\mathcal{N}(P,i)^{\dag}\mathcal{N}(P,i)\right]\Pi_{\Sigma}(i),\quad\forall i\in\mathcal{S}.$$
Then $\mathbf{\Sigma^{*}+\Sigma}\in\mathcal{H}\left[A,C;B,D\right]_{\alpha}$ and
\begin{equation}\label{main-result-p1-1}
\begin{aligned}
 \mathcal{N}(P,i)\left(\Sigma^{*}(i)+\Sigma(i)\right)=-\mathcal{L}_{\Sigma}(P,i)^{\top}+\mathcal{N}(P,i)\Sigma(i)=-\mathcal{L}(P,i)^{\top},\quad \forall i\in\mathcal{S}.
\end{aligned}
\end{equation}
Consequently, there exist a $\mathbf{\Pi}=(\Pi(1),\Pi(2),\cdots,\Pi(L))\in\mathcal{D}(\mathbb{R}^{m\times n})$ such that
$$
\Sigma^{*}(i)+\Sigma(i)=-\mathcal{N}(P,i)^{\dag}\mathcal{L}(P,i)^{\top}+\left[I-\mathcal{N}(P,i)^{\dag}\mathcal{N}(P,i)\right]\Pi(i),\quad \forall i\in\mathcal{S}.
$$

On the other hand, by \eqref{relation} and some straightforward calculations, we have
\begin{equation}\label{main-result-p1-2}
  \begin{aligned}
 &\big[A_{\Sigma}(\alpha)^{\top}-\mathcal{L}_{\Sigma}(P,\alpha)\mathcal{N}(P,\alpha)^{\dag}B(\alpha)^{\top}\big]\eta+\big[C_{\Sigma}(\alpha)^{\top}-\mathcal{L}_{\Sigma}(P,\alpha)\mathcal{N}(P,\alpha)^{\dag}D(\alpha)^{\top}\big]\zeta\\
        &\quad+\big[C_{\Sigma}(\alpha)^{\top}-\mathcal{L}_{\Sigma}(P,\alpha)\mathcal{N}(P,\alpha)^{\dag}D(\alpha)^{\top}\big]P(\alpha)\sigma-\mathcal{L}_{\Sigma}(P,\alpha)\mathcal{N}(P,\alpha)^{\dag}\rho+P(\alpha)b+q_{\Sigma}   \\
=&\big[A(\alpha)^{\top}-\mathcal{L}(P,\alpha)\mathcal{N}(P,\alpha)^{\dag}B(\alpha)^{\top}\big]\eta+\big[C(\alpha)^{\top}-\mathcal{L}(P,\alpha)\mathcal{N}(P,\alpha)^{\dag}D(\alpha)^{\top}\big]\zeta\\
        &+\big[C(\alpha)^{\top}-\mathcal{L}(P,\alpha)\mathcal{N}(P,\alpha)^{\dag}D(\alpha)^{\top}\big]P(\alpha)\sigma-\mathcal{L}(P,\alpha)\mathcal{N}(P,\alpha)^{\dag}\rho+P(\alpha)b+q\\
        &+\Sigma(\alpha)\left[I-\mathcal{N}(P,\alpha)\mathcal{N}(P,\alpha)^{\dag}\right]\widetilde{\rho}.
\end{aligned}
\end{equation}
In addition, it follows from \eqref{eta-constraint} that
$$
\left[I-\mathcal{N}(P,\alpha_{t})\mathcal{N}(P,\alpha_{t})^{\dag}\right]\widetilde{\rho}(t)=0,\quad a.e.,\quad a.s..
$$
Hence, to sum up, we complete the proof.
\end{proof}

Now, we return to prove the Theorem \ref{thm-main-result}. 

\begin{proof}[\textbf{Proof of Theorem \ref{thm-main-result}}]
  (ii) $\Rightarrow$ (i) is obvious.

$(iii) \Rightarrow (ii)$.  Let $(\mathbf{\widehat{\Theta}},\widehat{\nu}(\cdot))$ is given by \eqref{SLQ-closed-loop} and $X_{\widehat{\Theta}}(\cdot)\equiv X_{\widehat{\Theta}}(\cdot;x,i,\nu)$ for any given $\nu(\cdot)\in L_{\mathbb{F}}^{2}(\mathbb{R}^{m})$. Similar to Lemma \ref{lem-cost-rewritten} and noting that
\begin{align*}
  &\mathcal{L}(P,\alpha)^{\top}+\mathcal{N}(P,\alpha)\widehat{\Theta}(\alpha)=0,\quad\\
  &\mathcal{M}(P,\alpha)+\mathcal{L}(P,\alpha)\widehat{\Theta}(\alpha)+\widehat{\Theta}(\alpha)^{\top}\mathcal{L}(P,\alpha)^{\top}+\widehat{\Theta}(\alpha)^{\top}\mathcal{N}(P,\alpha)\widehat{\Theta}(\alpha)=0,
\end{align*}
we have
\begin{equation}\label{main-result-2}
 J_{\widehat{\Theta}}(x,i;\nu(\cdot))=\big<P(i)x,x\big>+\mathbb{E}\int_{0}^{\infty}\big[2\big<\bar{q},X_{\widehat{\Theta}}\big>+\big<\mathcal{N}(P,\alpha)\nu+2\bar{\rho},\nu\big>+\big<P(\alpha)\sigma,\sigma\big>\big]dt,
\end{equation}
where
$\bar{q}=P(\alpha)b+C(\alpha)^{\top}P(\alpha)\sigma+q+\widehat{\Theta}(\alpha)^{\top}\big[D(\alpha)^{\top}P(\alpha)\sigma+\rho\big]$ and $\bar{\rho}=D(\alpha_{t})^{\top}P(\alpha_{t})\sigma(t)+\rho(t).$

On the other hand, applying the It\^o's rule to $\big<\eta(t),X_{\widehat{\Theta}}(t;x,i,\nu)\big>$, we have
\begin{align*}
 \mathbb{E}&\big[\big<\eta(0),x\big>\big]=\mathbb{E}\int_{0}^{\infty}\Big\{
 \big<\big[A(\alpha)^{\top}-\mathcal{L}(P,\alpha)\mathcal{N}(P,\alpha)^{\dag}B(\alpha)^{\top}\big]\eta
 +\big[C(\alpha)^{\top}-\mathcal{L}(P,\alpha)\mathcal{N}(P,\alpha)^{\dag}D(\alpha)^{\top}\big]\zeta\\
&\quad+\big[C(\alpha)^{\top}-\mathcal{L}(P,\alpha)\mathcal{N}(P,\alpha)^{\dag}D(\alpha)^{\top}\big]P(\alpha)\sigma
-\mathcal{L}(P,\alpha)\mathcal{N}(P,\alpha)^{\dag}\rho+P(\alpha)b+q,X_{\widehat{\Theta}}\big>\\
 &\quad-\big<\eta,\big[A(\alpha)+B(\alpha)\widehat{\Theta}(\alpha)\big]X_{\widehat{\Theta}}+B(\alpha)\nu+b\big>-\big<\zeta,\big[C(\alpha)+D(\alpha)\widehat{\Theta}(\alpha)\big]X_{\widehat{\Theta}}+D(\alpha)\nu+\sigma\big>
 \Big\}dt\\
 &=\mathbb{E}\int_{0}^{\infty}\Big\{
 -\big<\mathcal{L}(P,\alpha)\mathcal{N}(P,\alpha)^{\dag}\widetilde{\rho},X_{\widehat{\Theta}}\big>+\big<C(\alpha)^{\top}P(\alpha)\sigma+P(\alpha)b+q,X_{\widehat{\Theta}}\big>\\
 &\quad-\big<B(\alpha)^{\top}\eta+D(\alpha)^{\top}\zeta,\widehat{\Theta}(\alpha)X_{\widehat{\Theta}}+\nu\big>-\big<\eta,b\big>-\big<\zeta,\sigma\big>
 \Big\}dt\\
 &=\mathbb{E}\int_{0}^{\infty}\big\{
 -\big<\big[\widehat{\Theta}(\alpha)^{\top}+\mathcal{L}(P,\alpha)\mathcal{N}(P,\alpha)^{\dag}\big]\widetilde{\rho},X_{\widehat{\Theta}}\big>
 +\big<\bar{q},X_{\widehat{\Theta}}\big>-\big<B(\alpha)\nu+b,\eta\big>-\big<D(\alpha)\nu+\sigma,\zeta\big>
 \big\}dt\\
 &=\mathbb{E}\int_{0}^{\infty}\big\{
\big<\bar{q},X_{\widehat{\Theta}}\big>-\big<B(\alpha)\nu+b,\eta\big>-\big<D(\alpha)\nu+\sigma,\zeta\big>
 \big\}dt.
\end{align*}
Combining with \eqref{main-result-2}, one can obtain
\begin{align*}
 &\quad J_{\widehat{\Theta}}(x,i;\nu(\cdot))-\big<P(i)x,x\big>-2\mathbb{E}\big[\big<\eta(0),x\big>\big]\\
 &=\mathbb{E}\int_{0}^{\infty}\big\{
\big<\mathcal{N}(P,\alpha)\nu,\nu\big>+2\big<\widetilde{\rho},\nu\big>+2\big<b,\eta\big>+2\big<\sigma,\zeta\big>
+\big<P(\alpha)\sigma,\sigma\big>
 \big\}dt\\
 &=\mathbb{E}\int_{0}^{\infty}\big\{
\big<\mathcal{N}(P,\alpha)\nu,\nu\big>-2\big<\mathcal{N}(P,\alpha)\widehat{\nu},\nu\big>+2\big<b,\eta\big>+2\big<\sigma,\zeta\big>
+\big<P(\alpha)\sigma,\sigma\big>
 \big\}dt\\
 &=\mathbb{E}\int_{0}^{\infty}\big\{
\big<\mathcal{N}(P,\alpha)(\nu-\widehat{\nu}),\nu-\widehat{\nu}\big>-\big<\mathcal{N}(P,\alpha)\widehat{\nu},\widehat{\nu}\big>+2\big<b,\eta\big>+2\big<\sigma,\zeta\big>
+\big<P(\alpha)\sigma,\sigma\big>
 \big\}dt.
\end{align*}
Consequently, noting that $\mathcal{N}(P,\alpha)\geq 0$, we have
\begin{align*}
 &J_{\widehat{\Theta}}(x,i;\nu(\cdot))-J_{\widehat{\Theta}}(x,i;\widehat{\nu}(\cdot))
 \geq 0,\quad\forall (x,i,\nu(\cdot))\in\mathbb{R}^{n}\times \mathcal{S}\times L_{\mathbb{F}}^{2}(\mathbb{R}^{m}).
\end{align*}
Therefore, by Colorally \ref{coro-closed-loop control}, $(\mathbf{\widehat{\Theta}},\widehat{\nu}(\cdot))$ defined by \eqref{SLQ-closed-loop} is an optimal closed-loop control for Problem (M-SLQ).

(i) $\Rightarrow$ (iii). We only need to prove such a result under the assumption that the system $\left[A,C\right]_{\alpha}$ is L$^2$-stable. For a more general case that the system $\left[A,C;B,D\right]_{\alpha}$ is L$^2$-stabilizable, one can select a stabilizer $\Sigma\in\mathcal{H}[A,C;B,D]_{\alpha}$ and directly obtain the desired result by using the Proposition \ref{prop-solvability-equivalent}. 

From Proposition \ref{prop-Hilbert-results}, we know that if 
Problem (M-SLQ) is open-loop solvable, then Problem (M-SLQ)$^{0}$ is also open-loop solvable and $\mathbb{N}_{i}\geq0$ for any $i\in\mathcal{S}$. 
Now, for any $\varepsilon>0$, we consider the following cost functional:
\begin{equation}\label{cost-varepsilon}
\begin{aligned}
    J_{\varepsilon}^{0}\left(x,i;u(\cdot)\right)
    & \triangleq J^{0}\left(x,i;u(\cdot)\right)+\varepsilon\mathbb{E}\int_{0}^{\infty}|u(t)|^2dt\\
    &=\mathbb{E}\int_{0}^{\infty}
    \left<
    \left(
    \begin{array}{cc}
    Q(\alpha_{t})&S(\alpha_{t})^{\top}\\
    S(\alpha_{t})&R(\alpha_{t})+\varepsilon I
    \end{array}
    \right)
    \left(
    \begin{array}{c}
    X(t)\\
    u(t)
    \end{array}
    \right),
    \left(
    \begin{array}{c}
    X(t)\\
    u(t)
    \end{array}
    \right)
    \right>dt,
  \end{aligned}
\end{equation}
with state constraint $X(\cdot)\equiv X^{0}(\cdot;x,i,u)$. For simplicity, we denote the above problem as Problem (M-SLQ)$_{\varepsilon}^{0}$ and the corresponding value function is denoted by $V_{\varepsilon}^{0}(\cdot,\cdot)$. Clearly,
\begin{equation*}
  J_{\varepsilon}^{0}(0,i;u(\cdot))\geq \varepsilon \mathbb{E}\int_{0}^{\infty}|u(t)|^2dt,\quad \forall  u(\cdot)\in L_{\mathbb{F}}^{2}(\mathbb{R}^{m}),\quad \forall i\in\mathcal{S}.
\end{equation*}
 Let $\mathcal{N}_{\varepsilon}(P_{\varepsilon},i)\triangleq
    D(i)^{\top}P_{\varepsilon}(i)D(i)+R(i)+\varepsilon I$. Then, by Proposition \ref{prop-uni-convex-result}, the following CAREs
\begin{equation}\label{uni-convex-CAREs-varepsilon}
\left\{
   \begin{aligned}
    &\mathcal{M}(P_{\varepsilon},i)-\mathcal{L}(P_{\varepsilon},i) \mathcal{N}_{\varepsilon}(P_{\varepsilon},i)^{-1} \mathcal{L}(P_{\varepsilon},i)^{\top} = 0,\\
    & \mathcal{N}_{\varepsilon}(P_{\varepsilon},i)> 0,\quad\forall i\in\mathcal{S},
   \end{aligned}
   \right.
\end{equation}
admits a solution $\mathbf{P_{\varepsilon}}\in\mathcal{D}\left(\mathbb{S}^{n}\right)$ such that $V_{\varepsilon}^{0}(x,i)=\big<P_{\varepsilon}(i)x,x\big>$ for $(x,i)\in\mathbb{R}^{n}\times \mathcal{S}$. Specially, let
\begin{equation}\label{main-result-p0-1}
\Theta_{\varepsilon}(i)\triangleq - \mathcal{N}_{\varepsilon}(P_{\varepsilon},i)^{-1}\mathcal{L}(P_{\varepsilon},i)^{\top},\quad i\in\mathcal{S}.
\end{equation}
Then,
$\mathbf{\Theta_{\varepsilon}}=\left[\Theta_{\varepsilon}(1), \Theta_{\varepsilon}(2),\cdots,\Theta_{\varepsilon}(L)\right]\in \mathcal{H}[A,C;B,D]_{\alpha}.$
Morever, the unique open-loop optimal control of Problem (M-SLQ)$_{\varepsilon}^{0}$ is given by
\begin{equation}\label{main-result-p0-2}
u_{\varepsilon}^{*}(t;x,i)=\Theta_{\varepsilon}(\alpha_{t})\Psi_{\varepsilon}(t;i)x,\quad\forall (x,i)\in\mathbb{R}^{n}\times \mathcal{S},
\end{equation}
where $\Psi_{\varepsilon}(t;i)$ is determined by
$$
\left\{
\begin{aligned}
d\Psi_{\varepsilon}(t;i)&=\left[A(\alpha_{t})+B(\alpha_{t})\Theta_{\varepsilon}(\alpha_{t})\right]\Psi_{\varepsilon}(t;i)dt+\left[C(\alpha_{t})+D(\alpha_{t})\Theta_{\varepsilon}(\alpha_{t})\right]\Psi_{\varepsilon}(t;i)dW(t),\quad t\geq0,\\
   \Psi_{\varepsilon}(0;i)&=I,\quad \alpha_0=i, \quad i\in\mathcal{S}.
\end{aligned}
\right.
$$
Now, for $i\in\mathcal{S}$,  let $U_{i}^{*}(\cdot)$ be defined by \eqref{Hilbert-results-p2}. Then, for $(x,i)\in\mathbb{R}^{n}\times \mathcal{S}$ and $\varepsilon>0$, one has
$$
\begin{aligned}
&\quad V^{0}(x,i)+\varepsilon \mathbb{E} \int_{0}^{\infty}\left|\Theta_{\varepsilon}\left(\alpha_{t}\right) \Psi_{\varepsilon}\left(t, i\right) x\right|^{2} dt \\
&\leq  J^{0}\left(x, i ; u_{\varepsilon}^{*}(\cdot;x,i) \right)+\varepsilon \mathbb{E} \int_{0}^{\infty}\left|\Theta_{\varepsilon}\left(\alpha_{t}\right) \Psi_{\varepsilon}\left(t; i\right) x\right|^{2} dt \\
&=  V_{\varepsilon}^{0}(x, i) \leq J_{\varepsilon}^{0}\left(x, i ; U_{i}^{*}(\cdot) x\right)
=  V^{0}(x, i)+\varepsilon\mathbb{E} \int_{0}^{\infty}\left|U_{i}^{*}(t) x\right|^{2} dt,
\end{aligned}
$$
which implies that 
\begin{align}\label{main-result-p0-3}
 &V^{0}(x, i) \leq V_{\varepsilon}^{0}(X, i)=\left\langle P_{\varepsilon}(i) x, x\right\rangle \leq V^{0}(x, i)+\varepsilon  \mathbb{E} \int_{0}^{\infty}\left|U_{i}^{*}(t) x\right|^{2} d t,  \, \forall \varepsilon>0,\\
  \label{main-result-p0-4}
&0 \leq \Pi_{\varepsilon}(i)\triangleq \mathbb{E}  \int_{0}^{\infty} \Psi_{\varepsilon}(t, i)^{\top} \Theta_{\varepsilon}\left(\alpha_{t}\right)^{\top} \Theta_{\varepsilon}\left(\alpha_{t}\right) \Psi_{\varepsilon}(t, i) dt \leq  \mathbb{E}  \int_{0}^{\infty} U_{i}^{*}(t)^{\top} U_{i}^{*}(t) dt,  \, \forall \varepsilon>0.
\end{align}
It follows from \eqref{main-result-p0-3} that there exists $\mathbf{P}\in\mathcal{D}\left(\mathbb{S}^{n}\right)$ such that
\begin{align}\label{eq:papproxi}
  P(i)=\lim_{\varepsilon\rightarrow 0}P_{\varepsilon}(i)\,\mbox{ and } V^{0}(x,i)=\big<P(i)x,x\big>.
\end{align}
From \eqref{main-result-p0-4}, we can obtain that for any $i\in\mathcal{S}$, $\Pi_{\varepsilon}(i)$ is bounded. Recalling Proposition \ref{prop-L2} (iii) and observing that the system $[A+B\Theta_{\varepsilon};C+D\Theta_{\varepsilon}]_{\alpha}$ is $L^{2}$-stable,  one has
\begin{align*}
 &\Pi_{\varepsilon}(i)\big[A(i)+B(i)\Theta_{\varepsilon}(i)\big]+\big[A(i)+B(i)\Theta_{\varepsilon}(i)\big]^{\top}\Pi_{\varepsilon}(i)+ \Theta_{\varepsilon}(i)^{\top} \Theta_{\varepsilon}(i)\\
 &+\big[C(i)+D(i)\Theta_{\varepsilon}(i)\big]^{\top}\Pi_{\varepsilon}(i)\big[C(i)+D(i)\Theta_{\varepsilon}(i)\big]
 +\sum_{j=1}^{L}\pi_{ij} \Pi_{\varepsilon}(j)=0,\quad i\in\mathcal{S}.
\end{align*}
Therefore, for any $i\in\mathcal{S}$,
\begin{align*}
 0\leq \Theta_{\varepsilon}(i)^{\top} \Theta_{\varepsilon}(i)
 &=-\Big\{\Pi_{\varepsilon}(i)\big[A(i)+B(i)\Theta_{\varepsilon}(i)\big]+\big[A(i)+B(i)\Theta_{\varepsilon}(i)\big]^{\top}\Pi_{\varepsilon}(i)+\sum_{j=1}^{L}\pi_{ij} \Pi_{\varepsilon}(j)\\
 &\quad+\big[C(i)+D(i)\Theta_{\varepsilon}(i)\big]^{\top}\Pi_{\varepsilon}(i)\big[C(i)+D(i)\Theta_{\varepsilon}(i)\big]
 \Big\}\\
&\leq -\Big\{\Pi_{\varepsilon}(i)\big[A(i)+B(i)\Theta_{\varepsilon}(i)\big]+\big[A(i)+B(i)\Theta_{\varepsilon}(i)\big]^{\top}\Pi_{\varepsilon}(i)+\sum_{j=1}^{L}\pi_{ij} \Pi_{\varepsilon}(j)\Big\}.
\end{align*}
Thus, from the boundedness of $\left\{\Pi_{\varepsilon}(i)\right\}_{\varepsilon\geq 0}$, 
there exists some $K>0$ such that 
\begin{equation}\label{main-result-p0-5}
 \left|\Theta_{\varepsilon}(i)\right|^2 \leq K\left(1+\left|\Theta_{\varepsilon}(i)\right|\right),\quad \forall \varepsilon>0,\quad \forall   i\in\mathcal{S}.
\end{equation}
Therefore, 
$\left\{\Theta_{\varepsilon}(i)\right\}_{\varepsilon\geq 0}$ is bounded and there exists a sequence $\left\{\varepsilon_k\right\}_{k= 0}^{\infty}$ 
such that $\lim_{k\rightarrow\infty}\varepsilon_k=0$ and $\widehat{\Theta}(i)\triangleq \lim_{k\rightarrow\infty}\Theta_{\varepsilon_k}(i)$ exists for $i\in\mathcal{S}$. Moreover, 
\begin{equation}\label{main-result-p0-6}
  \begin{aligned}
 &\mathcal{N}(P,i)\widehat{\Theta}(i)
 =\lim_{k\rightarrow\infty} \mathcal{N}_{\varepsilon_{k}}(P_{\varepsilon_{k}},i)\Theta_{\varepsilon_{k}}(i)
 =-\lim_{k\rightarrow\infty}\mathcal{L}(P_{\varepsilon_{k}},i)^{\top}=-\mathcal{L}(P,i)^{\top}, \, i\in \mathcal{S}.
  \end{aligned}
\end{equation}
Consequently, for $i\in\mathcal{S}$, one has,
\begin{equation}\label{main-result-p0-7}
\left\{
\begin{aligned}
&\mathcal{L}(P,i)\left[I-\mathcal{N}(P,i)\mathcal{N}(P,i)^{\dag}\right]=0,\\
&\widehat{\Theta}(i)=-\mathcal{N}(P,i)^{\dag}\mathcal{L}(P,i)^{\top}+\left[I-\mathcal{N}(P,i)^{\dag}\mathcal{N}(P,i)\right]\Pi(i),\quad \text{for some }\Pi(i)\in\mathbb{R}^{m\times n}.
\end{aligned}
\right.
\end{equation}

Note that \eqref{uni-convex-CAREs-varepsilon} and \eqref{main-result-p0-1} imply that
\begin{equation}\label{main-result-p0-8}
\left\{
   \begin{aligned}
    &\mathcal{M}(P_{\varepsilon_k},i)-\Theta_{\varepsilon_k}(i)^{\top} \mathcal{N}_{\varepsilon_k}(P_{\varepsilon},i) \Theta_{\varepsilon_k}(i) = 0,\\
    & \mathcal{N}_{\varepsilon_{k}}(P_{\varepsilon_k},i)> 0,\quad\forall i\in\mathcal{S}.
   \end{aligned}
   \right.
\end{equation}
Hence, letting $k\rightarrow\infty$ and noting \eqref{main-result-p0-7}, we have
\begin{equation}\label{main-result-p0-9}
\left\{
   \begin{aligned}
    &\mathcal{M}(P,i)-\mathcal{L}(P,i) \mathcal{N}(P,i)^{\dag} \mathcal{L}(P,i)^{\top} = 0,\\
    & \mathcal{N}(P,i)\geq 0,\quad\forall i\in\mathcal{S}.
   \end{aligned}
   \right.
\end{equation}
Combining with \eqref{main-result-p0-7}, we can easily obtain that $\mathbf{P}$ defined by \eqref{eq:papproxi} solves the CAREs \eqref{CAREs-SLQ}.

To see  $\mathbf{P}$ is a static stabilizing solution, we only need to show that
$\mathbf{\widehat{\Theta}}\triangleq\left[\widehat{\Theta}(1),\cdots,\widehat{\Theta}(L)\right]\in\mathcal{H}[A,C;B,D]_{\alpha}.$
To this end, let $\Psi^{*}(\cdot;i)$ be the solution to the following SDE:
$$
\left\{
\begin{aligned}
d\Psi^{*}(t;i)&=\left[A(\alpha_{t})+B(\alpha_{t})\widehat{\Theta}(\alpha_{t})\right]\Psi^{*}(t;i)dt+\left[C(\alpha_{t})+D(\alpha_{t})\widehat{\Theta}(\alpha_{t})\right]\Psi^{*}(t;i)dW(t),\quad t\geq0,\\
   \Psi^{*}(0;i)&=I,\quad \alpha_0=i, \quad i\in\mathcal{S}.
\end{aligned}
\right.
$$
Then, applying the Fatou's lemma, 
one has
\begin{align*}
    \mathbb{E}\int_{0}^{\infty}\left|\widehat{\Theta}(\alpha_t)\Psi^{*}(t;i)x\right|^2dt
    &\leq \liminf_{k\rightarrow\infty}\mathbb{E}\int_{0}^{\infty}\left|\Theta_{\varepsilon_k}(\alpha_t)\Psi_{\varepsilon_k}(t;i)x\right|^2dt\leq \mathbb{E}\int_{0}^{\infty}\left|U_{i}^{*}(t)x\right|^2dt<\infty.
\end{align*}
Hence, for $i\in\mathcal{S}$, $\Psi^{*}(\cdot;i)\in L_{\mathbb{F}}^{2}(\mathbb{R}^{n\times n})$ and consequently, $\mathbf{\widehat{\Theta}}\in\mathcal{H}[A,C;B,D]_{\alpha}$.

Next, we consider the following BSDE:
\begin{equation}\label{main-result-p0-10}
\begin{aligned}
d\eta&=-\Big[\big(A(\alpha)+B(\alpha)\widehat{\Theta}(\alpha)\big)^{\top}\eta+\big(C(\alpha)+D(\alpha)\widehat{\Theta}(\alpha)\big)^{\top}\zeta+\big(C(\alpha)+D(\alpha)\widehat{\Theta}(\alpha)\big)^{\top}P(\alpha)\sigma\\
&\quad+\widehat{\Theta}(\alpha)^{\top}\rho+P(\alpha)b+q  \Big]dt+\zeta dW(t) +\mathbf{z} \cdot d\mathbf{\widetilde{N}}(t).
\end{aligned}
\end{equation}
Since $\mathbf{\widehat{\Theta}}\in\mathcal{H}[A,C;B,D]_{\alpha}$, it follows from Proposition \ref{prop-BSDE} that the above BSDE admits a $L^2$-stable adapted solution.
Let $u(\cdot)\in  L_{\mathbb{F}}^{2}(\mathbb{R}^{m})$ and $X(\cdot)\equiv X(\cdot;x,i,u)$. Then, by applying the It\^o's rule to $\big<\eta,X\big>$, we have
\begin{equation}\label{main-result-p0-11}
  \begin{aligned}
\mathbb{E}\big<\eta(0),x\big>&=\mathbb{E}\int_{0}^{\infty}\big[
\big<\widehat{\Theta}(\alpha)X,\widetilde{\rho}\big>+\big<C(\alpha)^{\top}P(\alpha)\sigma+P(\alpha)b+q,X\big>\\
&\quad-\big<B(\alpha)^{\top}\eta+D(\alpha)^{\top}\zeta,u\big>-\big<\eta,b\big>-\big<\zeta,\sigma\big>
\big]dt.
  \end{aligned}
\end{equation}
Thus, by Lemma \ref{lem-cost-rewritten} and noting that $\mathbf{P}$ solves  CAREs \eqref{CAREs-SLQ} and $\mathbf{\widehat{\Theta}}$ satisfies \eqref{main-result-p0-6}, one has
  \begin{align*}
J\left(x,i;u(\cdot)\right)&= \mathbb{E}
    \int_{0}^{\infty}\left[
    \left<\mathcal{M}(P,\alpha)X,X\right>+2\left<\mathcal{L}(P,\alpha)u,X\right>+\left<\mathcal{N}(P,\alpha)u,u\right>\right.\\
    &\quad\left.+2\left<\widehat{q},X\right>
    +2\left<\widehat{\rho},u\right>+\left<P(\alpha)\sigma,\sigma\right>\right]dt+\big<P(i)x,x\big>\\
    &=\mathbb{E}
    \int_{0}^{\infty}\big[
    \left<\mathcal{M}(P,\alpha)X,X\right>+2\left<\mathcal{L}(P,\alpha)u,X\right>+\left<\mathcal{N}(P,\alpha)u,u\right>\\
    &\quad+2\big<\widetilde{\rho},u-\widehat{\Theta}(\alpha)X\big>+2\big<\eta,b\big> +2\big<\zeta,\sigma\big>+\left<P(\alpha)\sigma,\sigma\right>\big]dt\\
    &\quad+\big<P(i)x,x\big>+2\mathbb{E}\big<\eta(0),x\big>\\
    &= \mathbb{E}
    \int_{0}^{\infty}\big[
   \big<\mathcal{N}(P,\alpha)(u-\widehat{\Theta}(\alpha)X),u-\widehat{\Theta}(\alpha)X\big>+2\big<\eta,b\big> +2\big<\zeta,\sigma\big>\\
    &\quad +2\big<\widetilde{\rho},u-\widehat{\Theta}(\alpha)X\big>+\left<P(\alpha)\sigma,\sigma\right>\big]dt+\big<P(i)x,x\big>+2\mathbb{E}\big<\eta(0),x\big>.
  \end{align*}
By Proposition \ref{prop-open-loop-admissible-controls-characterization}, 
there exists $\nu\in L_{\mathbb{F}}^{2}(\mathbb{R}^{m})$ such that
$u(\cdot)=\widehat{\Theta}(\alpha(\cdot))X_{\widehat{\Theta}}(\cdot;x,i,\nu)+\nu(\cdot)$.
Hence, noting that $X(\cdot;x,i,u)=X_{\widehat{\Theta}}(\cdot;x,i,\nu)$, the above equation  implies that
\begin{equation}\label{main-result-p0-13}
  \begin{aligned}
 J\left(x,i;u(\cdot)\right)&= \mathbb{E}\int_{0}^{\infty}\left[
    \left<\mathcal{N}(P,\alpha)\nu,\nu\right>+2\big<\eta,b\big>+2\big<\zeta,\sigma\big> +2\left<\widetilde{\rho},\nu\right>\right.\\
    &\quad\left. +\left<P(\alpha)\sigma,\sigma\right>\right]dt+\big<P(i)x,x\big>+2\mathbb{E}\big<\eta(0),x\big>.
  \end{aligned}
\end{equation}
Let $u^{*}(\cdot)=\widehat{\Theta}(\alpha(\cdot))X_{\widehat{\Theta}}(\cdot;x,i,\widehat{\nu})+\widehat{\nu}(\cdot)$ be the corresponding open-loop optimal control of Problem (M-SLQ) for initial state $(x,i)\in\mathbb{R}^{n}\times \mathcal{S}$.
Then
\begin{equation}\label{main-result-p0-14}
  \begin{aligned}
  &J\left(x,i;u(\cdot)\right)-J\left(x,i;u^{*}(\cdot)\right)\\
  &= \mathbb{E}\int_{0}^{\infty}\big[
    \left<\mathcal{N}(P,\alpha)(\nu-\widehat{\nu}),\nu-\widehat{\nu}\right>+2\left<\mathcal{N}(P,\alpha)\widehat{\nu}+\widetilde{\rho},\nu-\widehat{\nu}\right>\big]dt\geq 0,\quad \forall \nu\in L_{\mathbb{F}}^{2}(\mathbb{R}^{m}).
  \end{aligned}
\end{equation}
Therefore, we must have
\begin{equation}\label{main-result-p0-15}
  \mathcal{N}(P,\alpha)\widehat{\nu}+\widetilde{\rho}=0.
\end{equation}
Consequently, one has
\begin{equation}\label{main-result-p0-16}
  \left\{
  \begin{aligned}
&\widetilde{\rho}(t)\in\mathcal{R}\left(\mathcal{N}(P,\alpha_t)\right),\quad a.e.,\quad a.s.,\\
&\widehat{\nu}(t)=-\mathcal{N}(P,\alpha_t)^{\dag}\widetilde{\rho}(t)+\big[I-\mathcal{N}(P,\alpha_t)^{\dag}\mathcal{N}(P,\alpha_t)\big]\nu(t),\quad \nu(\cdot)\in L_{\mathbb{F}}^{2}(\mathbb{R}^{m}).
  \end{aligned}
  \right.
\end{equation}
It follows from \eqref{main-result-p0-7}, we have
$$
\big[\widehat{\Theta}(\alpha)^{\top}+\mathcal{L}(P,\alpha)\mathcal{N}(P,\alpha)^{\dag}\big]\widetilde{\rho}
=-\Pi(\alpha)^{\top}\big[I-\mathcal{N}(P,\alpha)^{\dag}\mathcal{N}(P,\alpha)\big]\mathcal{N}(P,\alpha)^{\dag}\widehat{\nu}=0.
$$
Thus,
\begin{align*}
  &\quad\big(A(\alpha)+B(\alpha)\widehat{\Theta}(\alpha)\big)^{\top}\eta+\big(C(\alpha)+D(\alpha)\widehat{\Theta}(\alpha)\big)^{\top}\zeta+\big(C(\alpha)+D(\alpha)\widehat{\Theta}(\alpha)\big)^{\top}P(\alpha)\sigma\\
&\quad+\widehat{\Theta}(\alpha)^{\top}\rho+P(\alpha)b+q \\
&=A(\alpha)^{\top}\eta+C(\alpha)^{\top}\zeta+C(\alpha)^{\top}P(\alpha)\sigma+P(\alpha)b+q+\widehat{\Theta}(\alpha)^{\top}\widetilde{\rho}\\
&=A(\alpha)^{\top}\eta+C(\alpha)^{\top}\zeta+C(\alpha)^{\top}P(\alpha)\sigma+P(\alpha)b+q-\mathcal{L}(P,\alpha)\mathcal{N}(P,\alpha)^{\dag}\widetilde{\rho}\\
&=\big[A(\alpha)^{\top}-\mathcal{L}(P,\alpha)\mathcal{N}(P,\alpha)^{\dag}B(\alpha)^{\top}\big]\eta+\big[C(\alpha)^{\top}-\mathcal{L}(P,\alpha)\mathcal{N}(P,\alpha)^{\dag}D(\alpha)^{\top}\big]\zeta\\
        &\quad+\big[C(\alpha)^{\top}-\mathcal{L}(P,\alpha)\mathcal{N}(P,\alpha)^{\dag}D(\alpha)^{\top}\big]P(\alpha)\sigma-\mathcal{L}(P,\alpha)\mathcal{N}(P,\alpha)^{\dag}\rho+P(\alpha)b+q.
\end{align*}
Therefore, the $L^2$-stable adapted solution $(\eta,\zeta,\mathbf{z})$ of \eqref{main-result-p0-10} also solves BSDE \eqref{eta}. Moreover, combining with \eqref{main-result-p0-13},  \eqref{main-result-p0-15} and \eqref{main-result-p0-16}, we have
\begin{align*}
    V(x,i)&=J(x,i;u^{*})=\mathbb{E}\int_{0}^{\infty}\left[
    \left<\mathcal{N}(P,\alpha)\widehat{\nu}+\widetilde{\rho},\widehat{\nu}\right>+2\big<\eta,b\big>+2\big<\zeta,\sigma\big> +\left<\widetilde{\rho},\widehat{\nu}\right>\right.\\
    &\quad\left. +\left<P(\alpha)\sigma,\sigma\right>\right]dt+\big<P(i)x,x\big>+2\mathbb{E}\big<\eta(0),x\big>\\
    &=\big<P(i)x,x\big>+\mathbb{E}\big\{2\big<\eta(0),x\big>+\int_{0}^{\infty}
      \big[\big<P(\alpha)\sigma,\sigma\big>+2\big<\eta,b\big>+2\big<\zeta,\sigma\big>-\big<\mathcal{N}(P,\alpha)^{\dag}\widetilde{\rho},\widetilde{\rho}\big>]dt\big\}.
\end{align*}
This completes the proof.
\end{proof}

For any fixed $\Sigma\in\mathcal{H}[A,C;B,D]_{\alpha}$, it follows from Remark \ref{rmk-convex} that the uniform convexity of Problem (M-SLQ) is equivalent to that of Problem (M-SLQ-$\Sigma$). Hence, we have the following result.

\begin{theorem}\label{thm-uni-convex-result}
Let $(H1)$ hold. If the Problem (M-SLQ) is uniformly convex, then the following results hold:
\begin{enumerate}
 \item[(i)] The CAREs \eqref{uni-convex-CAREs} admits a unique static stabilizing solution $\mathbf{P}\in\mathcal{D}\left(\mathbb{S}^n\right)$, that is, 
 $$\big[-\mathcal{N}(P,1)^{-1}\mathcal{L}(P,1)^{\top},\cdots, -\mathcal{N}(P,L)^{-1}\mathcal{L}(P,L)^{\top}\big]\in \mathcal{H}[A,C;B,D]_{\alpha},$$ and the BSDE
  \begin{equation}\label{eta-convex}
      \hspace{-0.4cm}\begin{aligned}
        d\eta&=-\big\{\big[A(\alpha)^{\top}-\mathcal{L}(P,\alpha)\mathcal{N}(P,\alpha)^{-1}B(\alpha)^{\top}\big]\eta+\big[C(\alpha)^{\top}-\mathcal{L}(P,\alpha)\mathcal{N}(P,\alpha)^{-1}D(\alpha)^{\top}\big]\zeta\\
        &\quad+\big[C(\alpha)^{\top}-\mathcal{L}(P,\alpha)\mathcal{N}(P,\alpha)^{-1}D(\alpha)^{\top}\big]P(\alpha)\sigma-\mathcal{L}(P,\alpha)\mathcal{N}(P,\alpha)^{-1}\rho+P(\alpha)b+q\big\}dt\\
        &\quad+\zeta dW(t)+\mathbf{z}\cdot d\mathbf{\widetilde{N}}(t),\quad t\geq 0,
      \end{aligned}
  \end{equation}
   admits a $L^{2}$-stable adapted solution  $\left(\eta(\cdot),\zeta(\cdot),\mathbf{z}(\cdot)\right)$.
   \item[(ii)] The Problem (M-SLQ) is closed-loop solvable and its closed-loop optimal strategy is given by 
   \begin{equation}\label{SLQ-closed-loop-uni-convex}
  \left\{
  \begin{aligned}
    \widehat{\Theta}(i)&=-\mathcal{N}(P,i)^{-1}\mathcal{L}(P,i)^{\top},\quad\forall i\in\mathcal{S},\\
    \widehat{\nu}(\cdot)&=-\mathcal{N}(P,\alpha)^{-1}\widetilde{\rho}(\cdot),
  \end{aligned}
  \right.
\end{equation}
where $\widetilde{\rho}(\cdot)$ is defined in \eqref{eta-constraint}.
\item[(iii)] The value function of Problem (M-SLQ) is given by \eqref{SLQ-value-function}.
  \end{enumerate}
\end{theorem}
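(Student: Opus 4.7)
The plan is to reduce the $L^{2}$-stabilizable setting back to the $L^{2}$-stable one via an auxiliary stabilizer, and then invoke the results developed earlier (particularly Proposition \ref{prop-uni-convex-result}, Proposition \ref{prop-solvability-equivalent}, Theorem \ref{thm-main-result}, and Remark \ref{rmk-CAREs}). Concretely, pick any $\mathbf{\Sigma}\in\mathcal{H}[A,C;B,D]_{\alpha}$ (which is nonempty by (H1)), so that the auxiliary system $[A_{\Sigma},C_{\Sigma}]_{\alpha}$ is $L^{2}$-stable. By Remark \ref{rmk-convex}, Problem (M-SLQ-$\Sigma$) is uniformly convex. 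Proposition \ref{prop-uni-convex-result} applied to Problem (M-SLQ-$\Sigma$) then produces $\mathbf{P}\in\mathcal{D}(\mathbb{S}^{n})$ solving the CAREs \eqref{uni-convex-CAREs} associated with the $\Sigma$-problem with $\mathcal{N}(P,i)>0$ and with $-\mathcal{N}(P,i)^{-1}\mathcal{L}_{\Sigma}(P,i)^{\top}\in\mathcal{H}[A_{\Sigma},C_{\Sigma};B,D]_{\alpha}$.

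For (i), I would argue that this same $\mathbf{P}$ already solves the original CAREs \eqref{uni-convex-CAREs} (without the $\Sigma$-shift). Indeed, $\mathcal{N}(P,i)=\mathcal{N}_{\Sigma}(P,i)>0$ so Remark \ref{rmk-CAREs} identifies \eqref{uni-convex-CAREs} with the constrained system \eqref{CAREs-SLQ}, and Proposition \ref{prop-solvability-equivalent}(i) transfers the static-stabilizing property from the $\Sigma$-CAREs to the original CAREs; in particular $\widehat{\Theta}(i):=-\mathcal{N}(P,i)^{-1}\mathcal{L}(P,i)^{\top}\in\mathcal{H}[A,C;B,D]_{\alpha}$ (via $\widehat{\Theta}=\Sigma^{*}+\Sigma$ as in the proof of that proposition). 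Uniqueness of the static stabilizing solution follows from the value-function representation: applying Theorem \ref{thm-main-result} to Problem (M-SLQ)$^{0}$ (which is also uniformly convex and hence open-loop solvable) forces $\langle P(i)x,x\rangle=V^{0}(x,i)$, and since $V^{0}$ is intrinsic to the problem, any two such $\mathbf{P}$ must coincide. For the BSDE \eqref{eta-convex}: since $\widehat{\Theta}$ stabilizes the system, the drift operator $[A_{\widehat{\Theta}}^{\top},C_{\widehat{\Theta}}^{\top}]$ is exactly that of a BSDE of type \eqref{BSDE} driven by an $L^{2}$-stable linear system, hence by Proposition \ref{prop-BSDE} it admits a unique $L^{2}$-stable adapted solution $(\eta,\zeta,\mathbf{z})$; the rewriting uses the identities $-\mathcal{L}(P,\alpha)\mathcal{N}(P,\alpha)^{-1}=\widehat{\Theta}(\alpha)^{\top}$ and $A+B\widehat{\Theta}=A_{\widehat{\Theta}}$, $C+D\widehat{\Theta}=C_{\widehat{\Theta}}$.

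For (ii) and (iii), since $\mathcal{N}(P,\alpha_{t})>0$ is invertible, the range condition \eqref{eta-constraint} is trivially satisfied: $\widetilde{\rho}(t)\in\mathbb{R}^{m}=\mathcal{R}(\mathcal{N}(P,\alpha_{t}))$, and the pseudoinverse $\mathcal{N}(P,\alpha)^{\dag}$ in \eqref{SLQ-closed-loop} reduces to $\mathcal{N}(P,\alpha)^{-1}$. Thus all three hypotheses of Theorem \ref{thm-main-result}(iii) hold, and one directly applies its conclusions: the closed-loop optimal strategy is \eqref{SLQ-closed-loop} with $\Pi$ and the homogeneous component of $\nu$ absent (because the $[I-\mathcal{N}^{\dag}\mathcal{N}]$ projectors vanish), which yields exactly the formulas \eqref{SLQ-closed-loop-uni-convex}; the value function representation \eqref{SLQ-value-function} follows verbatim.

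The main obstacle I anticipate is not any single step but making sure the \emph{transfer} between the $\Sigma$-shifted and the original problems is truly tight: one must verify that the static-stabilizing property and the strict positivity $\mathcal{N}(P,i)>0$ are preserved under the shift (using $\mathcal{N}(P,i)=\mathcal{N}_{\Sigma}(P,i)$ and the explicit computation $\mathcal{N}(P,i)\widehat{\Theta}(i)=-\mathcal{L}(P,i)^{\top}$ from \eqref{main-result-p1-1}), and that the unique solution to BSDE \eqref{eta-convex} provided by Proposition \ref{prop-BSDE} is the same object that Theorem \ref{thm-main-result} requires, after rewriting the $-\mathcal{L}\mathcal{N}^{-1}\rho$ term as $\widehat{\Theta}^{\top}\rho$. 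All remaining work is essentially bookkeeping with the identities collected in \eqref{relation} and the definitions \eqref{notations-3}.
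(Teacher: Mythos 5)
Your proposal is correct and follows essentially the same route as the paper: reduce to the $L^{2}$-stable auxiliary Problem (M-SLQ-$\Sigma$) via Remark \ref{rmk-convex}, obtain the positive-definite-$\mathcal{N}$ solution from Proposition \ref{prop-uni-convex-result}, transfer it back through Remark \ref{rmk-CAREs} and Proposition \ref{prop-solvability-equivalent}, and then invoke Theorem \ref{thm-main-result} with the range condition \eqref{eta-constraint} rendered automatic by invertibility of $\mathcal{N}(P,\cdot)$. The extra details you supply on the BSDE rewriting via $\widehat{\Theta}$ and on uniqueness via the value-function representation are exactly the ingredients the paper relies on (in the discussion preceding Corollary \ref{coro-main-result-0} and the remark following it).
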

\begin{proof}
If $(H1)$ hold and the Problem (M-SLQ) is uniformly convex, then by Remark \ref{rmk-convex}, Remark \ref{rmk-relation} and Proposition \ref{prop-Hilbert-results} (i), one can directly obtain that the  Problem (M-SLQ) is uniquely open-loop solvable. In the following, we only need to prove that the unique static stabilizing solution to \eqref{CAREs-SLQ} also sloves CAREs \eqref{uni-convex-CAREs}. Then the desired result follows from Theorem \ref{thm-main-result} directly.

Let $\Sigma\in\mathcal{H}[A,C;B,D]_{\alpha}$ and consider the following CAREs:
\begin{equation}\label{CAREs-SLQ-convex-g}
\left\{
  \begin{aligned}
    &\mathcal{M}_{\Sigma}(P,i)-\mathcal{L}_{\Sigma}(P,i) \mathcal{N}(P,i)^{-1} \mathcal{L}_{\Sigma}(P,i)^{\top} = 0,\\
    & \mathcal{N}(P,i)>0,\quad\forall i\in\mathcal{S}.
  \end{aligned}
  \right.
\end{equation}
Then by Proposition \ref{prop-uni-convex-result}, Remark \ref{rmk-CAREs} and Proposition \ref{prop-solvability-equivalent}, the above CAREs admits a solution, which is also a static stabilizing solution to \eqref{CAREs-SLQ}. On the other hand, a static stabilizing solution $\mathbf{P}$ to CAREs \eqref{CAREs-SLQ} satisfying $\mathcal{N}(P,i)>0$
for any $i\in\mathcal{S}$ must be the solution to CAREs \eqref{uni-convex-CAREs}. This completes the proof.
\end{proof}
\begin{remark}\label{rmk-uni-convex} \rm
The uniform convexity of Problem (M-SLQ) can be assured by the following condition:
\begin{equation}\label{uni-convex-condition-2}
R(i)>0,\quad\mbox{and}\quad Q(i)-S(i)^{\top}R(i)^{-1}S(i)\geq 0,\quad i\in\mathcal{S}.
\end{equation}
The condition \eqref{uni-convex-condition-2} is also known as the standard condition for studying SLQ problems. Since the uniform convexity equivalence between Problem (M-SLQ) and Problem (M-SLQ-$\Sigma$)  has not been explored in Sun and Yong \cite{Sun-Yong-2018-ISLQI}, they only obtains corresponding results under the assumption that system $[A,C]_{\alpha}$ is $L^{2}$-stable (see Theorem 5.3 of \cite{Sun-Yong-2018-ISLQI}).
\end{remark}

\section{Discounted SLQ problems}\label{section-discounted}
At the end of this paper, we show that the results derived in previous sections can also be used to solve the discounted SLQ problems (denoted by Problem (M-SLQ-r)), whose cost functional is:
\begin{equation}\label{cost-r}
\begin{aligned}
    J_{r}(x,i;u(\cdot))
    & \triangleq \mathbb{E}\int_{0}^{\infty}e^{-rt}\Big[
    \Big<
    \Big(
    \begin{matrix}
    Q(\alpha_{t})&S(\alpha_{t})^{\top}\\
    S(\alpha_{t})&R(\alpha_{t})
    \end{matrix}
    \Big)
    \Big(
    \begin{matrix}
    X(t)\\
    u(t)
    \end{matrix}
    \Big),
    \Big(
    \begin{matrix}
    X(t)\\
    u(t)
    \end{matrix}
    \Big)
    \Big>+2\Big<
    \Big(
    \begin{matrix}
    q(t)\\
    \rho(t)
    \end{matrix}
    \Big),
    \Big(
    \begin{matrix}
    X(t)\\
    u(t)
    \end{matrix}
    \Big)
    \Big>\Big]dt,
  \end{aligned}
\end{equation}
and state constraint is \eqref{state}. The corresponding value function is denoted by $V_{r}(x,i)$. 

To solve the above problem, let us consider the following transformations:
\begin{equation}\label{transformation}
\left\{
\begin{aligned}
&A_{r}(i)=A(i)-\frac{r}{2}I, \quad i\in\mathcal{S}\\
 &b_{r}(t)=e^{-\frac{r}{2}t}b(t), \quad
\sigma_{r}(t)=e^{-\frac{r}{2}t}\sigma(t), \quad
q_{r}(t)=e^{-\frac{r}{2}t}q(t), \quad
\rho_{r}(t)=e^{-\frac{r}{2}t}\rho (t), \quad t\geq 0.
\end{aligned}
\right.
\end{equation}
Let $(X_{r}(t),u_{r}(t))=(e^{-\frac{r}{2}t}X(t),e^{-\frac{r}{2}t}u(t))$, $t\geq 0$. Then, by It\^o's rule, we have
\begin{equation}\label{state-r}
 \left\{
 \begin{aligned}
   dX_{r}(t)&=\left[A_{r}\left(\alpha_{t}\right)X_{r}(t)+B\left(\alpha_{t}\right)u_{r}(t)+b_{r}(t)\right]dt\\
   &\quad+\left[C\left(\alpha_{t}\right)X_{r}(t)+D\left(\alpha_{t}\right)u_{r}(t)+\sigma_{r}(t)\right]dW(t),\qquad t\geq0,\\
   X_{r}(0)&=x,\quad \alpha_0=i.
   \end{aligned}
  \right.
\end{equation}
Additionally, the cost functional defined by \eqref{cost-r} can be rewritten as 
\begin{equation}\label{cost-r-2}
\begin{aligned}
  J_{r}(x,i;u(\cdot))
  & = \mathbb{E}\int_{0}^{\infty}\Big[
  \Big<
  \Big(
  \begin{matrix}
  Q(\alpha_{t})&S(\alpha_{t})^{\top}\\
  S(\alpha_{t})&R(\alpha_{t})
  \end{matrix}
  \Big)
  \Big(
  \begin{matrix}
  X_r(t)\\
  u_r(t)
  \end{matrix}
  \Big),
  \Big(
  \begin{matrix}
  X_r(t)\\
  u_r(t)
  \end{matrix}
  \Big)
  \Big>+2\Big<
  \Big(
  \begin{matrix}
  q_r(t)\\
  \rho_r(t)
  \end{matrix}
  \Big),
  \Big(
  \begin{matrix}
  X_r(t)\\
  u_r(t)
  \end{matrix}
  \Big)
  \Big>\Big]dt.
\end{aligned}
\end{equation}
We denote the problem with state constraint \eqref{state-r} and cost functional \eqref{cost-r-2} as Problem (M-SLQ)$_{r}$. 

Obviously, an element $u_{r}^{*}(\cdot;x,i)$ is an open-loop optimal control for Problem (M-SLQ)$_{r}$ if and only if $\big\{u^{*}(t)\triangleq e^{\frac{r}{2}t}u_{r}^{*}(t;x,i)\big| t\geq 0\big\}$ is an open-loop optimal control for Problem (M-SLQ-r). In this case, the optimal state process of  Problem (M-SLQ-r) also  satisfies $X^{*}(t;x,i)=e^{\frac{r}{2}t}X_{r}^{*}(t;x,i), \, t\geq 0.$

On the other hand, a pair $(\mathbf{\widehat{\Theta}_{r}},\widehat{\nu}_{r}(\cdot)$ is a closed-loop optimal  strategy of Problem (M-SLQ)$_{r}$ if and only if 
\begin{equation}\label{feedback-1}
u^{*}(t;x,i)\triangleq e^{\frac{r}{2}t} \big[\widehat{\Theta}_{r}(\alpha_{t})\widehat{X}_{r}(t;x,i,\mathbf{\widehat{\Theta}_{r}},\widehat{\nu}_{r})+\widehat{\nu}_{r}(t)\big],\quad t\geq 0,
\end{equation}
is an open-loop optimal control of Problem (M-SLQ-r) for any initial value $(x,i)\in\mathbb{R}^{n}\times\mathcal{S}$.
Here, $\widehat{X}_{r}(\cdot)\equiv \widehat{X}_{r}(\cdot;x,i,\mathbf{\widehat{\Theta}_{r}},\widehat{\nu}_{r})$ is the solution to the following SDE: 
\begin{equation}\label{state-Theta-r}
 \hspace{-0.1cm}\left\{
 \begin{aligned}
   d\widehat{X}_{r}(t)&=\left[\left(A_{r}\left(\alpha_{t}\right)+B\left(\alpha_{t}\right)\widehat{\Theta}_{r}(\alpha_{t})\right)\widehat{X}_{r}(t)+B\left(\alpha_{t}\right)\widehat{\nu}_{r}(t)+b_{r}(t)\right]dt\\
   &\quad+\left[\left(C\left(\alpha_{t}\right)+D\left(\alpha_{t}\right)\widehat{\Theta}_{r}(\alpha_{t})\right)\widehat{X}_{r}(t)+D\left(\alpha_{t}\right)\widehat{\nu}_{r}(t)+\sigma_{r}(t)\right]dW(t),\,t\geq0,\\
   \widehat{X}_{r}(0)&=x,\quad \alpha_0=i.
   \end{aligned}
  \right.
\end{equation}
Let $\left(u^{*}(\cdot;x,i),X^{*}(\cdot;x,i)\right)$ and $\left(u_{r}^{*}(\cdot;x,i),X_{r}^{*}(\cdot;x,i)\right)$ be the corresponding optimal pair of Problem (M-SLQ-r) and Problem (M-SLQ)$_{r}$, respectively. Then, we have
$\widehat{X}_{r}(t;x,i,\mathbf{\widehat{\Theta}_{r}},\widehat{\nu}_{r})
=X_{r}^{*}(t;x,i)= e^{-\frac{r}{2}t}X^{*}(t;x,i),\, t\geq 0.$
Consequently, the optimal control $u^{*}(\cdot;x,i)$ defined by \eqref{feedback-1} can be rewritten as 
\begin{equation}\label{feedback-2}
u^{*}(t;x,i)=  \widehat{\Theta}_{r}(\alpha_{t})X^{*}(t;x,i)+e^{\frac{r}{2}t}\widehat{\nu}_{r}(t),\quad \forall (x,i)\in\mathbb{R}^{n}\times\mathcal{S},\quad t\geq 0,
\end{equation}
which implies that $\big(\mathbf{\widehat{\Theta}_{r}}, \{\widehat{\nu}(t)\triangleq e^{\frac{r}{2}t}\widehat{\nu}_{r}(t)\big|t\geq 0\}\big)$ is a closed-loop optimal strategy of Problem (M-SLQ-r).
To sum up, we have the following result.
\begin{theorem}\label{thm-discounted}
The Problem (M-SLQ-r) is equivalent to Problem (M-SLQ)$_{r}$ in the sense that the following holds:
\begin{enumerate}
    \item[(i)] For any $(x,i)\in\mathbb{R}^{n}\times\mathcal{S}$, $u_{r}(\cdot;x,i)$ is an open-loop optimal control for Problem (M-SLQ)$_{r}$ if and only if $\big\{u^{*}(t)= e^{\frac{r}{2}t}u_{r}^{*}(t;x,i)\big| t\geq 0\big\}$ is an open-loop optimal control for Problem (M-SLQ-r).
    \item[(ii)] A pair $(\mathbf{\widehat{\Theta}_{r}},\widehat{\nu}_{r}(\cdot)$ is a closed-loop optimal  strategy of Problem (M-SLQ)$_{r}$ if and only if $\big(\mathbf{\widehat{\Theta}_{r}}, \{\widehat{\nu}(t)=e^{\frac{r}{2}t}\widehat{\nu}_{r}(t)\big|t\geq 0\}\big)$ is a closed-loop optimal strategy of Problem (M-SLQ-r).
\end{enumerate}
\end{theorem}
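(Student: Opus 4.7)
The plan is to verify the equivalence via the exponential rescaling $(X_r(t), u_r(t)) = (e^{-rt/2} X(t), e^{-rt/2} u(t))$ that the author has already introduced above the statement. The key observation, which follows from a direct application of It\^o's formula to $e^{-rt/2}X(t)$ together with the definitions in \eqref{transformation}, is that this change of variables sends any solution of the state equation \eqref{state} driving Problem (M-SLQ-r) to a solution of the transformed state equation \eqref{state-r} driving Problem (M-SLQ)$_r$, and vice versa. Since $e^{-rt/2}$ is bounded on $[0,\infty)$ for $r\geq 0$, the forward map preserves $L^2_{\mathbb{F}}$-membership of both state and control, so the rescaling induces a bijection between admissible pairs of the two problems. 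Moreover, factoring $e^{-rt}$ out of the integrand in \eqref{cost-r} and substituting $(X, u, q, \rho) = e^{rt/2}(X_r, u_r, q_r, \rho_r)$ shows that the two cost functionals \eqref{cost-r} and \eqref{cost-r-2} take identical values on corresponding admissible pairs.

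With the cost identity in hand, part (i) follows immediately. Because the map $u(\cdot) \mapsto u_r(\cdot) = e^{-rt/2}u(\cdot)$ is a bijection between the admissible control sets at $(x,i)$ that preserves the value of the cost, a control $u^{*}(\cdot;x,i)$ minimizes $J_r(x,i;\cdot)$ over admissible controls for Problem (M-SLQ-r) precisely when $u_r^{*}(t) = e^{-rt/2}u^{*}(t)$ minimizes the cost for Problem (M-SLQ)$_r$. Inverting the rescaling yields the claimed characterization $u^{*}(t)=e^{rt/2}u_r^{*}(t;x,i)$.

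For part (ii), I would exploit that the closed-loop state equation \eqref{state-Theta-r} in Problem (M-SLQ)$_r$ driven by $(\widehat{\Theta}_r,\widehat{\nu}_r(\cdot))$ arises from the analogous closed-loop equation for Problem (M-SLQ-r) with feedback pair $(\widehat{\Theta}_r,\{e^{rt/2}\widehat{\nu}_r(t)\}_{t\geq 0})$ through the same scaling $\widehat{X}_r(t) = e^{-rt/2}\widehat{X}(t)$. Consequently the outcomes are related by the identity \eqref{feedback-2}, namely $u^{*}(t)=\widehat{\Theta}_r(\alpha_t)X^{*}(t)+e^{rt/2}\widehat{\nu}_r(t)$. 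By part (i) applied at every initial state $(x,i)\in\mathbb{R}^{n}\times\mathcal{S}$, the outcome $u^{*}$ is an open-loop optimal control for Problem (M-SLQ-r) if and only if the corresponding outcome of $(\widehat{\Theta}_r,\widehat{\nu}_r)$ is an open-loop optimal control for Problem (M-SLQ)$_r$. Invoking Definition \ref{def-open-closed}(iv) then yields statement (ii), and the reverse implication is identical upon running the rescaling backwards.

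The only subtle bookkeeping is ensuring that the admissibility constraints correspond consistently under the rescaling, since the factor $e^{rt/2}$ appearing in the inverse transformation is unbounded; here one uses that the class of admissible pairs for Problem (M-SLQ-r) is defined intrinsically on the process $X(\cdot)$ rather than on $e^{-rt/2}X(\cdot)$, so that the forward bijection is the natural one to use, and no additional integrability gap arises. Beyond this routine verification, the argument is an elementary consequence of the rescaling identity, and I do not expect any deeper obstacle.
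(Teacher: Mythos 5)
Your proposal is correct and follows essentially the same route as the paper: the paper's proof \emph{is} the pre-theorem discussion built on the rescaling $(X_r,u_r)=(e^{-\frac{r}{2}t}X,e^{-\frac{r}{2}t}u)$, the resulting identity between the cost functionals \eqref{cost-r} and \eqref{cost-r-2}, and the outcome identity \eqref{feedback-2} for the closed-loop part. The admissibility bookkeeping you flag at the end (the unboundedness of $e^{\frac{r}{2}t}$ in the inverse map) is treated no more carefully in the paper, which simply asserts the correspondence of optimal controls as obvious.
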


\begin{remark} \rm
Note that
$$
A_{r}(i)+A_{r}(i)^{\top}+C(i)^{\top}C(i)=A(i)+A(i)^{\top}+C(i)^{\top}C(i)-rI,\quad i\in\mathcal{S}.
$$
Therefore, by Remark \ref{rmk-stable}, the system $[A_{r},C]_{\alpha}$ must be  $L^{2}$-stable when $r>0$ is large enough.
\end{remark}

\section{Examples}\label{section-Examples}
This section presents two concrete examples to illustrate the results of previous sections. The numerical algorithms we have used for checking LIMs or solving SDP are similar to those in \cite{Jianhui-Huang-2015,Li-Zhou-Rami-2003-ID-MLQ-IF}.
Without loss of generality, we suppose the state space of the Markov chain $\alpha(\cdot)$ is $\mathcal{S}:=\left\{1,2,3\right\}$ and the generator is given by
$$\bf{\pi}=\left[\begin{array}{ccc}
     \pi_{11}   &   \pi_{12}   &   \pi_{13}\\
    \pi_{21}   &   \pi_{22}   &   \pi_{23}\\
    \pi_{31}   &   \pi_{32}   &   \pi_{33}
\end{array}\right]
=\left[\begin{array}{ccc}
     -0.7   &   0.3   &   0.4\\
    0.1   &   -0.3   &   0.2\\
    0.2   &   0.3   &   -0.5
\end{array}\right]$$

\begin{example}\rm
Consider the following state equation
\begin{equation}\label{state-LQ-exam}
  \left\{
 \begin{aligned}
   dX(t)&=\left[A\left(\alpha_{t}\right)X(t)+B\left(\alpha_{t}\right)u(t)\right]dt
   +\left[C\left(\alpha_{t}\right)X(t)+D\left(\alpha_{t}\right)u(t)\right]dt,\\
   X(0)&=x,\quad \alpha_{0}=i,
   \end{aligned}
  \right.
\end{equation}
with the cost functional
\begin{equation}\label{cost-LQ-exam}
\begin{aligned}
    J\left(x,i;u(\cdot)\right)
    & \triangleq \mathbb{E}\int_{0}^{\infty}
    \left<
    \left(
    \begin{array}{cc}
    Q(\alpha_{t}) & S(\alpha_{t})^{\top} \\
    S(\alpha_{t}) & R(\alpha_{t})
    \end{array}
    \right)
    \left(
    \begin{array}{c}
    X(t) \\
    u(t) 
    \end{array}
    \right),
    \left(
    \begin{array}{c}
    X(t) \\
    u(t)
    \end{array}
    \right)
    \right>dt.
  \end{aligned}
\end{equation}
In the following, we suppose that the state process is valued in $\mathbb{R}$ and the control process is valued in $\mathbb{R}^{2}$. Without loss of generality,
let{\small
\begin{align*}
 &\left[\begin{matrix}
  A(1)\\A(2)\\A(3)
\end{matrix}\right]=\left[\begin{matrix}
  1\\-1\\2
\end{matrix}\right],\quad
\left[\begin{matrix}
  B(1)\\B(2)\\B(3)
\end{matrix}\right]=\left[\begin{matrix}
  1 & -1\\ 2 & 1\\ 1 & 1
\end{matrix}\right],\quad
\left[\begin{matrix}
  C(1)\\C(2)\\C(3)
\end{matrix}\right]=\left[\begin{matrix}
  -1\\2\\1
\end{matrix}\right],\quad
\left[\begin{matrix}
  D(1)\\D(2)\\D(3)
\end{matrix}\right]=\left[\begin{matrix}
  0 & 1\\ 1 & 1\\ 2 & -1
\end{matrix}\right],\\
&\left[\begin{matrix}
    Q(1)&S(1)^{\top}\\
    S(1)&R(1)
    \end{matrix}\right]
=\left[\begin{matrix}
     9   &   1   &   1   \\
    1  &   7   &   2 \\
    1   &   2   &   4
\end{matrix}\right],\quad
\left[\begin{matrix}
    Q(2)&S(2)^{\top} \\
    S(2)&R(2)
    \end{matrix}\right]
=\left[\begin{matrix}
     7   &   -1   &   1   \\
    -1  &   4  &   2 \\
    1   &   2   &   6
\end{matrix}\right],
\left[\begin{matrix}
    Q(3)&S(3)^{\top}\\
    S(3)&R(3)
    \end{matrix}\right]
=\left[\begin{matrix}
     15  &   3   &  -2   \\
    3  &   6   &   2 \\
    -2  &   2   &   7
\end{matrix}\right].
\end{align*}}%
After some calculations, we have 
$
\left[\begin{matrix}
  2A(1)+C(1)^{2}&2A(2)+C(2)^{2}&2A(3)+C(3)^{2}
\end{matrix}\right]=\left[\begin{matrix}
  3&2&5
\end{matrix}\right].
$
It follows from Remark \ref{rmk-stable} that the system $[A,C]_{\alpha}$ is not $L^{2}$- stable. However, if we select
$$\mathbf{\Sigma}=\left[\begin{matrix}
  \Sigma(1)&\Sigma(2)&\Sigma(3)
\end{matrix}\right]=\left[\begin{matrix}
  0&-1&-2\\2&0&-2
\end{matrix}\right],$$
then we can verify that
$$
\begin{array}{l}
  \left[\begin{matrix}
  A_{\Sigma}(1)&A_{\Sigma}(2)&A_{\Sigma}(3)
\end{matrix}\right]=\left[\begin{matrix}
  -1&-3&-2
\end{matrix}\right],\quad
\left[\begin{matrix}
  C_{\Sigma}(1)&C_{\Sigma}(2)&C_{\Sigma}(3)
\end{matrix}\right]=\left[\begin{matrix}
  1&1&-1
\end{matrix}\right],\\
\left[\begin{matrix}
  Q_{\Sigma}(1)&Q_{\Sigma}(2)&Q_{\Sigma}(3)
\end{matrix}\right]=\left[\begin{matrix}
  29&13&79
\end{matrix}\right],\quad
\left[\begin{matrix}
  S_{\Sigma}(1)&S_{\Sigma}(2)&S_{\Sigma}(3)
\end{matrix}\right]=\left[\begin{matrix}
  5&-5&-13\\
  9&-1&-20
\end{matrix}\right],\\
\left[\begin{matrix}
  2A_{\Sigma}(1)+C_{\Sigma}(1)^{2}&2A_{\Sigma}(2)+C_{\Sigma}(2)^{2}&2A_{\Sigma}(3)+C_{\Sigma}(3)^{2}
\end{matrix}\right]=\left[\begin{matrix}
  -1&-5&-3
\end{matrix}\right].
\end{array}
$$
Thus, from Remark \ref{rmk-stable}, we know that the system $[A_{\Sigma},C_{\Sigma}]_{\alpha}$ is $L^{2}$- stable. And hence, the system $[A,C]_{\alpha}$ is $L^{2}$-stabilizable. On the other hand, the coefficients in \eqref{cost-LQ-exam} satisfy the standard condition \eqref{uni-convex-condition-2}.
Hence, it follows from Theorem \ref{thm-uni-convex-result} that Problem (M-SLQ) is uniquely closed-loop solvable. Solving the corresponding CAREs \eqref{uni-convex-CAREs}, we obtain
\begin{equation}\label{exam-1}
  P(1)=7.44607347,\quad P(2)=2.81837045,\quad P(3)=19.16846222.
\end{equation}
with the residual
$\|E_{1}(\mathbf{P_{r}})\|=5.3846\times 10^{-9},\,
\|E_{2}(\mathbf{P_{r}})\|=3.3293\times 10^{-8},\,
\|E_{3}(\mathbf{P_{r}})\|=1.8749\times 10^{-9}.$
Here, the residual 
$E_{i}(\mathbf{P})\triangleq \mathcal{M}(P,i)-\mathcal{L}(P,i) \mathcal{N}(P,i)^{-1} \mathcal{L}(P,i)^{\top},\, i=1,2,3.$
It follows from \eqref{SLQ-closed-loop-uni-convex} that the unique  optimal control of Problem (M-SLQ) is
$u^{*}(t;x,i)=\widehat{\Theta}(\alpha_{t})X^{*}(t;x,i),$
where
$$
\begin{aligned}
\mathbf{\widehat{\Theta}}
&=\left[
\left(\begin{matrix}
   -1.6350\\
    1.4994
\end{matrix}\right),
\left(\begin{matrix}
   -1.2202\\
   -0.4055
\end{matrix}\right),
\left(\begin{matrix}
    -1.7918\\
    -2.4117
\end{matrix}\right)
\right].
\end{aligned}$$
\end{example}

\begin{example}\label{exm-2}\rm
Consider the following state equation
\begin{equation}\label{state-LQr-exam}
  \left\{
 \begin{aligned}
   dX(t)&=\left[A\left(\alpha_{t}\right)X(t)+B\left(\alpha_{t}\right)u(t)\right]dt
   +\left[C\left(\alpha_{t}\right)X(t)+D\left(\alpha_{t}\right)u(t)\right]dt,\\
   X(0)&=x,\quad \alpha_{0}=i,
   \end{aligned}
  \right.
\end{equation}
with the cost functional
\begin{equation}\label{cost-LQr-exam}
\begin{aligned}
    J\left(x,i;u(\cdot)\right)
    &= \mathbb{E}\int_{0}^{\infty}e^{-rt}
    \left<
    \left(
    \begin{array}{cc}
    Q(\alpha_{t}) & S(\alpha_{t})^{\top} \\
    S(\alpha_{t}) & R(\alpha_{t})
    \end{array}
    \right)
    \left(
    \begin{array}{c}
    X(t) \\
    u(t) 
    \end{array}
    \right),
    \left(
    \begin{array}{c}
    X(t) \\
    u(t)
    \end{array}
    \right)
    \right>dt.
  \end{aligned}
\end{equation}
Suppose that state and control processes are valued in $\mathbb{R}^{2}$.
Let $r=0.2$ and
\begin{align*}
&\begin{array}{llll}
A(1)=\left[\begin{matrix}
     -3 & 1\\0 & -4
 \end{matrix}\right],
& A(2)=\left[\begin{matrix}
     -2 & -1\\1 & -3
 \end{matrix}\right],
& A(3)=\left[\begin{matrix}
     -4 & 2\\0 & -5
 \end{matrix}\right],
& B(1)=\left[\begin{matrix}
     -1 & 1\\3 & -4
 \end{matrix}\right],\\[0.3cm]
B(2)=\left[\begin{matrix}
     -4 & 0\\1 & 1
 \end{matrix}\right],
&B(3)=\left[\begin{matrix}
     2 & 1\\0 & 1
 \end{matrix}\right],
&C(1)=\left[\begin{matrix}
     1 & 1\\0 & 1
 \end{matrix}\right],
&C(2)=\left[\begin{matrix}
     1 & -1\\0 & 1
 \end{matrix}\right],\\[0.3cm]
C(3)=\left[\begin{matrix}
     0 & 1\\1 & 2
 \end{matrix}\right], 
& D(1)=\left[\begin{matrix}
     -1 & 1\\0 & 2
 \end{matrix}\right],
&D(2)=\left[\begin{matrix}
     -2 & -2\\1 & -2
 \end{matrix}\right],
&D(3)=\left[\begin{matrix}
     3 & 1\\2 & -4
 \end{matrix}\right],
\end{array}\\
&\begin{array}{lll}
Q(1)=\left[\begin{matrix}
     1.55 & 0.02\\0.02 & 1.68
 \end{matrix}\right],
& Q(2)=\left[\begin{matrix}
     1.75 & 0.05\\0.05 & 1.71
 \end{matrix}\right],
& Q(3)=\left[\begin{matrix}
     1.59 & 0.06\\0.06 & 1.55
 \end{matrix}\right],\\[0.3cm]
 S(1)=\left[\begin{matrix}
     0.04 & -0.08\\-0.07 & 0.06
 \end{matrix}\right],
&S(2)=\left[\begin{matrix}
     -0.07 & 0.02\\0.00 & 0.03
 \end{matrix}\right],
&S(3)=\left[\begin{matrix}
     0.01 & -0.01\\0.08 & 0.02
 \end{matrix}\right],\\[0.3cm]
R(1)=\left[\begin{matrix}
     1.63 & -0.01\\-0.01 & 1.74
 \end{matrix}\right],
&R(2)=\left[\begin{matrix}
     1.58 & -0.05\\-0.05 & 1.56
 \end{matrix}\right],
&R(3)=\left[\begin{matrix}
     1.70 & 0.02\\0.02 & 1.75
 \end{matrix}\right], 
\end{array}
\end{align*}
Then one can easily verify that system $[A_{r},C]_{\alpha}$ is L$^{2}$-stable and the coefficients in \eqref{cost-LQr-exam} satisfy the standard  condition \eqref{uni-convex-condition-2}. Therefore, the above example is uniquely solvable.
Now, consider the following  constrained CAREs:
\begin{equation}\label{uni-convex-CAREs-r}
\left\{
   \begin{aligned}
    &\mathcal{M}_{r}(P,i)-\mathcal{L}(P,i) \mathcal{N}(P,i)^{-1} \mathcal{L}(P,i)^{\top} = 0,\\
    & \mathcal{N}(P,i)> 0,\quad\forall i\in\mathcal{S},
   \end{aligned}
   \right.
\end{equation}
where $A_{r}(i)=A(i)-0.1I$ and
$$
\begin{array}{l}
\mathcal{M}_{r}(P,i)\triangleq P(i)A_{r}(i)+A_{r}(i)^{\top}P(i)+C(i)^{\top}P(i)C(i)+Q(i)+\sum_{j=1}^{3}\pi_{ij}P(j),\quad i\in\mathcal{S}.
\end{array}
$$
Solving the above CAREs, we obtain
\begin{equation}\label{exam-2}
\begin{aligned}
  &P_{r}(1)= \left[\begin{matrix}
    0.2824 & 0.0953\\
    0.0953 & 0.3082
 \end{matrix}\right],\quad
 P_{r}(2)=\left[\begin{matrix}
    0.2769 &  0.0583\\
    0.0583 &  0.2940
 \end{matrix}\right],\quad
P_{r}(3)=\left[\begin{matrix}
   0.1998  &  0.0575\\
   0.0575  &  0.2155
 \end{matrix}\right],
\end{aligned}
\end{equation}
with the residual
$\|E_{1}(\mathbf{P_{r}})\|=4.9743\times 10^{-8},\,
\|E_{2}(\mathbf{P_{r}})\|=2.6956\times 10^{-8},\,
\|E_{3}(\mathbf{P_{r}})\|=4.3397\times 10^{-8}.$
Consequently, the optimal strategy of the Problem (M-SLQ-r) is given by
$$u^{*}(t;x,i)=\widehat{\Theta}(\alpha_{t})X^{*}(t;x,i),\quad t\geq 0,\quad \forall(x,i)\in\mathbb{R}^{2}\times \{1,2,3\},$$
where $\widehat{\Theta}(i)=-\mathcal{N}(P_{r},i)^{-1} \mathcal{L}(P_{r},i)^{\top}$, $i=1,2,3$, are given by
\begin{align*}
    &\widehat{\Theta}(1)
    =\left(\begin{matrix}
      0.1074   &   -0.2087\\
     -0.0694  &   -0.0573
    \end{matrix}\right),\,
    \widehat{\Theta}(2)=
    \left(\begin{matrix}
     0.5739     &   -0.2677\\
     0.0640    &   -0.0308
    \end{matrix}\right),\,
    \widehat{\Theta}(3)=
    \left(\begin{matrix}
      -0.1907    &   -0.3502\\
       0.0297   &    0.1535
    \end{matrix}\right).
\end{align*}
\end{example}

\bibliography{references}
\bibliographystyle{siamplain}

\end{document}